\pgfplotsset{compat=newest} 
\pgfplotsset{plot coordinates/math parser=false}
\newlength\fwidth
\definecolor{myBlue}{rgb}{0.0,0.0,0.55}
  \newcounter{mnote}
  \let\oldmarginpar\marginpar
    \renewcommand\marginpar[1]{\-\oldmarginpar[\raggedleft\footnotesize #1]%
    {\raggedright\footnotesize #1}}
\newtheorem{theorem}{Theorem}[section]
\newtheorem{lemma}[theorem]{Lemma}
\newtheorem{remark}[theorem]{Remark}
\newcommand{\dx}{\,{\rm d}x}
\newcommand{\dd}{\,{\rm d}}
\newcommand{\bs}{\boldsymbol}
\DeclareMathOperator*{\spa}{span}
\DeclareMathOperator{\dist}{dist}
\newcommand{\Oplus}{\ensuremath{\vcenter{\hbox{\scalebox{1.5}{$\oplus$}}}}}
\begin{document}
\title[Geometric Decompositions of Smooth Finite Elements]{Geometric Decompositions of The Simplicial lattice and Smooth Finite Elements in Arbitrary Dimension}

\author{Long Chen}%
 \address{Department of Mathematics, University of California at Irvine, Irvine, CA 92697, USA}%
 \email{chenlong@math.uci.edu}%
 \author{Xuehai Huang}%
 \address{School of Mathematics, Shanghai University of Finance and Economics, Shanghai 200433, China}%
 \email{huang.xuehai@sufe.edu.cn}%

\thanks{The first author was supported by NSF DMS-1913080 and DMS-2012465.}
\thanks{
The second author was supported by the National Natural Science Foundation of China under grants 11771338 and 12171300, the Natural Science Foundation of Shanghai 21ZR1480500, and the Fundamental Research Funds for the Central Universities 2019110066.}

\makeatletter
\@namedef{subjclassname@2020}{\textup{2020} Mathematics Subject Classification}
\makeatother
\subjclass[2020]{
  65N30;   
  74S05; 
}
 \begin{abstract}
Recently $C^m$-conforming finite elements on simplexes in arbitrary dimension are constructed by Hu, Lin and Wu. The key in the construction is a non-overlapping decomposition of the simplicial lattice in which each component will be used to determine the normal derivatives at each lower dimensional sub-simplex. A geometric approach is proposed in this paper and a geometric decomposition of the finite element spaces is given. Our geometric decomposition using the graph distance not only simplifies the construction but also provides an easy way of implementation. 
 \end{abstract}

\keywords{$C^m$-conforming finite element, simplicial lattice, geometric decomposition, graph distance}

\maketitle


\section{Introduction}
In a recent work \cite{huConstructionConformingFinite2021}, Hu, Lin and Wu have solved a long-standing open problem in finite element methods: construction of $C^m$-conforming finite elements on simplexes in arbitrary dimension. It unifies the scattered results  \cite{BrambleZlamal1970,Zenisek1970,ArgyrisFriedScharpf1968} in two dimensions, \cite{Zenisek1974a,Zhang2009a} in three dimensions, and \cite{Zhang2016a} in four dimensions. In this paper, we provide a geometric decomposition of the finite element spaces constructed in \cite{huConstructionConformingFinite2021} and consequently give a simplified construction different from~\cite{huConstructionConformingFinite2021}.

A finite element on a geometric domain $K$ is defined as a triple $(K, V, {\rm DoF})$ by Ciarlet in~\cite{Ciarlet1978}, where $V$ is the finite-dimensional space of shape functions and the set of degrees of freedom (DoFs) is a basis of the dual space $V'$ of $V$. In this paper $K = T$ is a simplex in $\mathbb R^n$ and $V = \mathbb P_k(T)$ is the polynomial space with degree bounded by $k$. The difficulty is to identify an appropriate basis of $V'$ to enforce the continuity of the functions across the boundary of the elements so that the global finite element space is a subspace of $C^m(\Omega)$, where $\Omega\subset \mathbb R^n$ admits a conforming triangulation $\mathcal T_h$ consisting of simplexes. It is well known that for a piecewise smooth function to be in $C^{m}$, it suffices to ensure the continuity of the normal derivative $\partial_n^{r} u$ for $r=0,1,\ldots, m$ across each $(n-1)$-dimensional face of $\mathcal T_h$. Those $(n-1)$-dimensional faces will meet at lower dimensional sub-simplexes. For example, in three dimensions, faces will share edges and vertices. The continuity of $\partial_n^{r} u$ on faces will imply stronger smoothness on edges and vertices,  which is known as the super-smoothness \cite{Sorokina:2010Intrinsic,Floater;Hu:2020characterization}. Indeed in \cite{huConstructionConformingFinite2021}, the authors constructed $C^m$-conforming finite elements under the following requirement on
the $C^{r_{\ell}}$ smoothness at $\ell$-dimensional sub-simplexes
$$
r_{n}=0,\;\; r_{n-1}=m\geq0,\;\; r_{\ell}\geq 2r_{\ell+1} \; \textrm{ for } \ell=n-2,\ldots, 0,
$$
and the degree of polynomial is exponential in $n$:
$$
k\geq 2r_0+1 \geq 2^n m + 1.
$$
Notice that such a requirement is sufficient but by no means necessary.  

We introduce the simplicial lattice $\mathbb T^{n}_k = \left \{ \alpha = (\alpha_0, \alpha_1, \ldots, \alpha_n)\in\mathbb N^{0:n} \mid  |\alpha | = k \right \}$ as the multi-index set with fixed length $k$. Due to the one-to-one mapping between the Bernstein polynomial $\lambda^{\alpha}$, where $\lambda$ is the barycentric coordinate, and the lattice node $\alpha\in \mathbb T^{n}_k $, the key in the construction is a non-overlapping decomposition (partition) of the simplicial lattice in which each component will be used to determine the normal derivatives at each lower dimensional sub-simplex. In  \cite{huConstructionConformingFinite2021}, a purely algebraic and combinatory approach is used to prove the existence of such partition. In this paper, a geometric approach will be proposed. 

We shall introduce a graph distance $\dist(\alpha, f)$ from a lattice node $\alpha \in \mathbb T^{n}_k$ to a $\ell$-dimensional sub-simplex $f\in \Delta_{\ell}(T)$  and reveal the following fact:
for $\alpha\in \mathbb T^{n}_k, \beta \in \mathbb N^{1:n}$, and $\dist(\alpha, f) > |\beta|$, we have
\begin{equation}\label{eq:dist2diff}
D^{\beta} \lambda^{\alpha}|_{f} = 0.
\end{equation}

For $f \in \Delta_{\ell}(T)$, denote by $f^{*} \in \Delta_{n- \ell-1}(T)$ the sub-simplex of $T$ opposite to $f$. For a lattice node $\alpha\in \mathbb T^{n}_k$, it can be decomposed into two components $\alpha_f, \alpha_{f^*}$, and $\dist(\alpha, f) =  | \alpha_{f^*}|$. 
To give a geometric decomposition of DoFs which ensures the $C^m$-conformity, we reveal the one-to-one mapping of the space $\spa\{ \lambda^{\alpha} = \lambda_f^{\alpha_f}\lambda_{f^*}^{\alpha_{f^*}} , \alpha\in \mathbb T^{n}_k, | \alpha_{f^*}| = s \}$ with the DoFs
\begin{equation}\label{eq:intronormalDof}
\int_f  \frac{\partial^{\beta} u}{\partial n_f^{\beta}} \, \lambda_f^{\alpha_f} \dd s \quad \forall~\alpha\in \mathbb T^{n}_k, |\alpha_f| = k - s, \beta \in \mathbb N^{1:n-\ell}, |\beta | = s
\end{equation}
by mapping $\alpha_{f^*}$ to $\beta$.

Based on \eqref{eq:dist2diff} and \eqref{eq:intronormalDof}, we establish a direct decomposition of the simplicial lattice on an $n$-dimensional simplex $T$:
\begin{equation}\label{intro:smoothdecnd}
  \mathbb T^{n}_k(T) = \Oplus_{\ell = 0}^{n}\Oplus_{f\in \Delta_{\ell}(T)} S_{\ell}(f),
\end{equation}
where
\begin{align*}
S_0(\texttt{v}) &=  D(\texttt{v}, r_0), \\
S_{\ell}(f) &= D(f, r_{\ell}) \backslash \left [ \bigcup_{i=0}^{\ell-1}\bigcup_{e\in \Delta_{i}(f)}D(e, r_{i}) \right ], \; \ell = 1,\dots, n-1, \\
S_n(T) & = \mathbb T^{n}_k(T) \backslash  \left [  \bigcup_{i=0}^{n-1}\bigcup_{f\in \Delta_{i}(T)}D(f, r_{i}) \right ].
\end{align*}
Here $D(f, r) = \{ \alpha \in \mathbb T^{n}_k, \dist(\alpha,f) \leq r\}$ contains lattice nodes at most $r$ distance away from $f$ and will be called the lattice tube of $f$ with radius $r$. The requirement $r_{\ell-1}\geq 2r_{\ell}$ ensures $\{ D(f, r_{\ell}) \backslash \left [ \cup_{e\in \Delta_{\ell - 1}(f)} D(e, r_{\ell - 1})\right ], f\in \Delta_{\ell} (T)\}$ are disjoint so that \eqref{intro:smoothdecnd} is a direct decomposition. 
Geometrically we push all lattice nodes in $S_{\ell}(f)$ to the face $f$ to determine $r_{\ell}$ normal derivatives on $f$. 
Consequently we have the geometric decomposition 
\begin{equation*}
\mathbb P_{k}(T) = \Oplus_{\ell = 0}^{n} \Oplus_{f\in \Delta_{\ell}(T)} \mathbb P_k(S_{\ell}(f)).
\end{equation*}
The direct decomposition \eqref{intro:smoothdecnd}, together with  \eqref{eq:dist2diff} and \eqref{eq:intronormalDof}, implies the DoFs
\begin{align}
\label{intro:C1Rnd0}
D^{\alpha} u (\texttt{v}) & \quad \alpha \in \mathbb N^{1:n}, |\alpha | \leq  r_0, \texttt{v}\in \Delta_0(\mathcal T_h),\\
\label{intro:C1Rndf}
\int_f \frac{\partial^{\beta} u}{\partial n_f^{\beta}}  \, \lambda_f^{\alpha_f} \dd s & \quad \alpha\in S_{\ell}(f), |\alpha_f| = k - s, \beta \in \mathbb N^{1:n-\ell}, |\beta | = s, \\
&\quad f\in \Delta_{\ell}(\mathcal T_h), \ell =1,\ldots, n-1, s=0,\ldots, r_{\ell},\notag \\
\label{intro:C1RndT}
\int_T u \lambda^{\alpha} \dx & \quad \alpha \in S_n(T), T\in \mathcal T_h.
\end{align}
DoF \eqref{intro:C1Rnd0} means $u$ is $C^{r_0}$-continuous at vertices in $\Delta_0(\mathcal T_h)$.
By DoFs \eqref{intro:C1Rnd0}-\eqref{intro:C1Rndf} and the unisolvence of the finite element on faces, $\frac{\partial^{\beta} u}{\partial n_f^{\beta}}$ is single-valued across face $f\in \Delta_{\ell}(\mathcal T_h)$ for $|\beta|\leq r_{\ell}$, that is $u$ is $C^{r_{\ell}}$-continuous across $f$ and in particular $C^m$-continuous across $(n-1)$-dimensional faces as $r_{n-1}=m$. The interior DoF \eqref{intro:C1RndT} is included for the unisolvence. Our geometric decomposition using the distance not only simplifies the construction but also provide an easy way of implementation. 


Besides standard finite elements on simplexes, $C^m$-conforming finite elements on macro-hypercubes and $C^1$-conforming finite elements on macro-simplices in arbitrary dimension are developed in \cite{HuZhang2015a} and \cite{FuGuzmanNeilan2018}, respectively. In \cite{Xu2020}, Xu exploits the artificial neural network to devise $C^m$-conforming piecewise polynomials and then develops a finite neuron method. On the other side, the lowest order nonconforming finite elements on simplexes were devised in \cite{WangXu2013,WangXu2006,WuXu2019} for $m\leq n$ and $k=m+1$.
We refer to \cite{WuXu2017,HuZhang2017,HuZhang2019} for more $H^m$-nonconforming finite elements and \cite{ChenHuang2020,Huang2020,Huang:2021-Conforming} for $H^m$-conforming and nonconforming virtual elements on any shape of polytope $K$ in $\mathbb R^n$.


The rest of this paper is organized as follows. Some notation and simplicial lattice are introduced in Section~\ref{sec:lattice}. We show the geometric decomposition of Lagrange finite elements and Hermite finite elements in arbitrary dimension in Section~\ref{sec:lagrange} and Section~\ref{sec:hermite}, respectively.
In Section~\ref{sec:geodecomp2d} the geometric decomposition of $C^m$-conforming finite elements in two dimensions is studied. And the geometric decomposition of $C^m$-conforming finite elements in arbitrary dimension is developed in Section \ref{sec:geodecompnd}.

\section{simplicial lattice}\label{sec:lattice}
Let $T \in \mathbb{R}^{n}$ be an $n$-dimensional simplex with vertices $\texttt{v}_{0}, \texttt{v}_{1}, \ldots, \texttt{v}_{n}$ in general position. That is
$$
T = \left \{ \sum_{i=0}^n \lambda_i \texttt{v}_i \mid 0\leq \lambda_i \leq 1, \sum_{i=0}^n\lambda_i = 1 \right \},
$$
where $\lambda = (\lambda_0, \lambda_1, \ldots, \lambda_n) $ is called the barycentric coordinate. We will write $T = {\rm Convex}(\texttt{v}_0, \ldots, \texttt{v}_n)$, where ${\rm Convex}$ stands for the convex combination.

\subsection{The simplicial lattice}
For two non-negative integers $l\leq m$, we will use the multi-index notation $\alpha \in \mathbb{N}^{l:m}$, meaning $\alpha=\left(\alpha_{l}, \cdots, \alpha_{m}\right)$ with integer $\alpha_{i} \geqslant 0$. The length of a multi-index is $|\alpha|:=\sum_{i=l}^m \alpha_{i}$ for $\alpha\in \mathbb{N}^{l:m}$. We can also treat $\alpha$ as a vector with integer valued coordinates.
We define $\lambda^{\alpha}:=\lambda_{0}^{\alpha_{0}} \cdots \lambda_{n}^{\alpha_{n}}$ for $\alpha\in \mathbb{N}^{0: n}$.

A simplicial lattice of degree $k$ and dimension $n$ is a multi-index set of $n+1$ components and with fixed length $k$, i.e.,
$$
\mathbb T^{n}_k = \left \{ \alpha = (\alpha_0, \alpha_1, \ldots, \alpha_n)\in\mathbb N^{0:n} \mid  \alpha_0 + \alpha_1 + \ldots + \alpha_n = k \right \}.
$$
An element $\alpha\in \mathbb T^{n}_k$ is called a node of the lattice. We use the convention that: for a vector $\alpha \geq c$ means $\alpha_i \geq c$ for all components $i=0,1, \ldots, n$.
It holds that
$$| \mathbb T^{n}_k | = {n + k \choose k} = \dim \mathbb P_k(T),$$
where $\mathbb P_k(T)$  denotes the set of real valued polynomials defined on $T$ of degree less than or equal to $k$.
Indeed the Bernstein basis of $\mathbb P_k(T)$ is
$$
\{ \lambda^{\alpha}: = \lambda_0^{\alpha_0}\lambda_1^{\alpha_1}\ldots \lambda_n^{\alpha_n} \mid \alpha \in \mathbb T^{n}_k\}.
$$
For a subset $S\subseteq \mathbb T^{n}_k$, we define
$$
\mathbb P_k(S) = \spa \{ \lambda^{\alpha}, \alpha \in S\subseteq \mathbb T^{n}_k \}.
$$
With such one-to-one mapping between the lattice node $\alpha$ and the Bernstein polynomial $\lambda^{\alpha}$, we can study properties of polynomials through the simplicial lattice.

Two nodes $\alpha, \beta\in \mathbb T^{n}_k$ are adjacent if there exist $0\leq i_1<i_2\leq n$ such that $|\alpha_{i_1} - \beta_{i_1}|=|\alpha_{i_2} - \beta_{i_2}|=1$ and $|\alpha_i - \beta_i|=0$ for $i\neq i_1, i_2$. By assigning edges to all adjacent nodes, the simplicial lattice becomes an undirected graph. The distance of two nodes in the graph is the length of a minimal path connecting them, where the length of a path is defined as the number of edges in the path. One can easily verify that the graph distance $\dist_G(\alpha,\beta)=\frac{1}{2}\sum_{i=0}^n|\alpha_i-\beta_i|$ which is one half of the $L^1$-norm of $\alpha - \beta$ treating $\alpha,\beta \in \mathbb R^{n+1}$.

\subsection{Geometric embedding of a simplicial lattice}
We can embed the simplicial lattice into a geometric simplex by using $\alpha/k$ as the barycentric coordinate of node $\alpha$. Given $\alpha\in \mathbb T^{n}_k$, the barycentric coordinate of $\alpha$ is given by
$$
\lambda(\alpha) = (\alpha_0, \alpha_1, \ldots, \alpha_n )/k.
$$
Let $T$ be a simplex with vertices $\{\texttt{v}_0, \texttt{v}_1, \ldots, \texttt{v}_n\}$. The geometric embedding is
$$
x: \mathbb T^{n}_k \to T, \quad x(\alpha) = \sum_{i=0}^n \lambda_i(\alpha) \texttt{v}_i. 
$$
We will always assume such a geometric embedding of the simplicial lattice exists and write as $\mathbb T^{n}_k(T)$. See Fig. \ref{fig:lattice} for an illustration for a two-dimensional simplicial lattice embedded into a triangle.

The so-called reference simplex $\hat T$ is spanned by vertices $\texttt{v}_0 = \bs 0$ and $\texttt{v}_i = \bs e_i = (0, \ldots, 1, \ldots, 0)$, whose barycentric coordinate $\lambda_i = x_i$ for $i=1,2,\ldots n$ and $\lambda_0 = 1 - \sum_{i=1}^n x_i$. If we embed $\mathbb T^{n}_k$ to the scaled reference simplex $k \hat T$, then the coordinate of $(\alpha_0, \alpha_1, \ldots, \alpha_n )\in \mathbb T^{n}_k(k \hat T)$ is simply $(\alpha_1, \ldots, \alpha_n )$, where $\alpha_0$ is dropped as the vertex $\texttt{v}_0$ is mapped to the origin. Of course, we can set other vertex as the origin and obtain other embeddings.

A simplicial lattice $\mathbb T^{n}_k$ is, by definition, an algebraic set. Through the geometric embedding $\mathbb T^{n}_k(T)$, we can use operators for the geometric simplex $T$ to study this  algebraic set. For example, for a subset $S\subseteq T$, we use $\mathbb T^{n}_k(S) = \{ \alpha \in \mathbb T^{n}_k, x(\alpha)\in S\}$ to denote the portion of lattice nodes whose geometric embedding is inside $S$. The superscript ${}^n$ will be replaced by the dimension of $S$ when $S$ is a lower dimensional simplex introduced in a moment. 

\subsection{Sub-simplicial lattices}
Following \cite{ArnoldFalkWinther2009}, we let $\Delta(T)$ denote all the subsimplices of $T$, while $\Delta_{\ell}(T)$ denotes the set of subsimplices of dimension $\ell$, for $0\leq \ell \leq n$. The cardinality of $\Delta_{\ell}(T)$ is $\displaystyle{n+1\choose \ell+1}$. Elements of $\Delta_0(T) = \{\texttt{v}_0, \ldots, \texttt{v}_n\}$ are $n+1$ vertices of $T$ and $\Delta_n(T) = T$.

It is the combinatory structure of the simplex that plays an important role in the construction. For a sub-simplex $f\in \Delta_{\ell}(T)$, we will overload the notation $f$ for both the geometric simplex and the algebraic set of indices. Namely $f = \{f(0), \ldots, f(\ell)\}\subseteq \{0, 1, \ldots, n\}$ and 
$$
f ={\rm Convex}(\texttt{v}_{f(0)}, \ldots, \texttt{v}_{f(\ell)}) \in \Delta_{\ell}(T)
$$
is the $\ell$-dimensional simplex spanned by the vertices $\texttt{v}_{f(0)}, \ldots, \texttt{v}_{f( \ell)}$.

If $f \in \Delta_{\ell}(T)$, then $f^{*} \in \Delta_{n- \ell-1}(T)$ denotes the sub-simplex of $T$ opposite to $f$. When treating $f$ as a subset of $\{0, 1, \ldots, n\}$, $f^*\subseteq \{0,1, \ldots, n\}$ so that $f\cup f^* = \{0, 1, \ldots, n\}$, i.e., $f^*$ is the complementary of $f$. Geometrically,
$$
f^* ={\rm Convex}(\texttt{v}_{f^*(1)}, \ldots, \texttt{v}_{f^*(n-\ell)}) \in \Delta_{n- \ell-1}(T)
$$
is the $(n- \ell-1)$-dimensional simplex spanned by vertices not contained in $f$. 

We use capital $F$ for an $(n-1)$-dimensional face of $T$ and label $F_{i}$ as the $(n-1)$-dimensional face opposite to $\texttt{v}_i$, i.e., $F_i = \{ i \}^*$ as the set and as a simplex $F_i = {\rm Convex}(\texttt{v}_{0}, \ldots, \widehat{\texttt{v}}_i, \ldots, \texttt{v}_{n})$ where $\widehat{\texttt{v}}_i$ means $\texttt{v}_i$ is removed. The sub-simplicial lattice $\mathbb T_k^{n-1}(F_0) =\{ \alpha\in \mathbb N^{1:n}, |\alpha | = k\}$ can be related to derivative $D^{\alpha}u, \alpha\in \mathbb N^{1:n}, |\alpha | = k$. 
%
%

Given a sub-simplex $f\in \Delta_{\ell}(T)$, through the geometric embedding $f \hookrightarrow T$, we define the prolongation/extension operator $E: \mathbb T^{\ell}_k \to \mathbb T^{n}_k$ as follows:
$$
E(\alpha)_{f(i)} = \alpha_{i}, i=0,\ldots, \ell, \quad \text{ and } E(\alpha)_j = 0, j\not\in f.
$$
For example, assume $f = \{ 1, 3, 4\}$, then for 
$\alpha = ( \alpha_{0},\alpha_{1},\alpha_{2})\in \mathbb T^{\ell}_k$, the extension $ E(\alpha)= (0, \alpha_{0}, 0, \alpha_{1}, \alpha_{2}, \ldots, 0).$ The geometric embedding $x(E(\alpha))\in f$ which justifies the notation $\mathbb T^{\ell}_k(f)$.
With a slight abuse of notation, for a node $\alpha_f\in \mathbb T^{\ell}_k(f)$, we still use the same notation $\alpha_f\in \mathbb T^{n}_k(T)$ to denote such extension. Then we have the following direct decomposition
\begin{equation}\label{eq:decalpha}
\alpha = E(\alpha_f) + E(\alpha_{f^*}) = \alpha_f + \alpha_{f^*}, \text{ and } |\alpha | = |\alpha_f | + | \alpha_{f^*}|.
\end{equation}
Based on \eqref{eq:decalpha}, we can write a Bernstein polynomial as
\begin{equation*}
\lambda^{\alpha} = \lambda_{f}^{\alpha_f}\lambda_{f^*}^{\alpha_{f^*}},
\end{equation*}
where $\lambda_{f}=\lambda_{f(0)} \ldots \lambda_{f(\ell)}\in \mathbb P_{\ell +1}(f)$ is the bubble function on $f$.

\begin{figure}[htbp]
\begin{center}
\includegraphics[width=5.5cm]{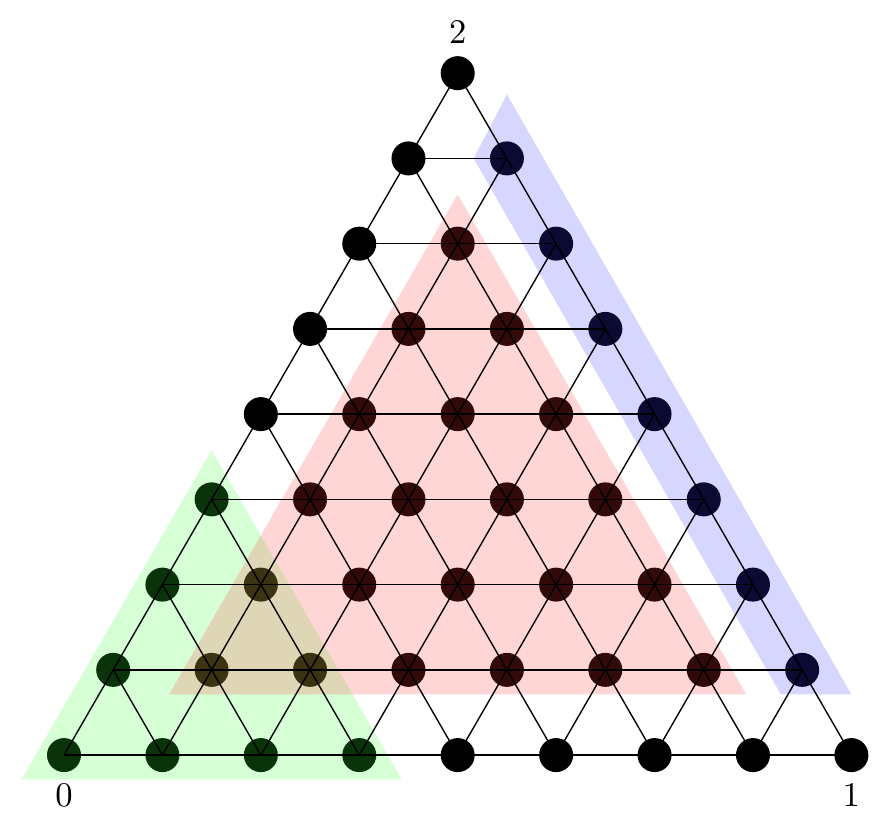}
\caption{A simplicial lattice in two dimensions.}
\label{fig:lattice}
\end{center}
\end{figure}

Define $\mathbb T^{\ell}_{k, c}(f) := \{ \alpha_{f} \in \mathbb T^{\ell}_k(f),  \alpha_{f}\geq c \}$. Then it is easy to see $\mathbb T^{\ell}_{k, 1}(f) = \mathbb T^{\ell}_{k}(\stackrel{\circ}{f})$, where the latter notation indicates the lattices nodes are contained in the interior of $f$; see the red triangle in Fig. \ref{fig:lattice}. The interior lattice $\mathbb T^{\ell}_{k}(\stackrel{\circ}{f})$
is isomorphic to a simplicial lattice with a smaller degree $k-(\ell +1)$, denoted by $\mathbb T^{\ell}_{k-(\ell +1)}(f)$. The one-to-one mapping is 
%
%
%
%
%
$$
\mathbb T^{\ell}_{k-(\ell +1)}(f) \to \mathbb T^{\ell}_{k, 1}(f): \alpha_{f} \to \alpha_{f} + 1.
$$
Denote by $b_f := \lambda_f$. The interior lattice is related to the so-called bubble polynomial of $f$:
$$
b_f \mathbb P_{k-(\ell +1)}(f) := \spa\{ b_f\lambda_{f}^{\alpha_{f}}: \alpha_{f} \in  \mathbb T^{\ell}_{k-(\ell +1)}(f)\}= \spa\{ \lambda_{f}^{\alpha_{f}}: \alpha_{f} \in  \mathbb T^{\ell}_{k,1}(f)\} .
$$
Geometrically as the bubble polynomial space vanished on the boundary, it is generated by the interior lattice nodes only. In Fig. \ref{fig:lattice}, $\mathbb T^{2}_{k}(\stackrel{\circ}{T})$ consists of the nodes inside the red triangle, and $\mathbb T^{1}_{k}(\stackrel{\circ}{f})$ for $f = \{0,1\}$ is in the blue trapezoid.

In summary, by treating $f$ as a set of indices, we can apply the operators $\cup, \cap, {}^*, \backslash$ on sets. While treating $f$ as a geometric simplex, $\partial f, \stackrel{\circ}{f}$ etc can be applied.
%
%
%


\subsection{Distance to a sub-simplex}
Given $f\in \Delta_{\ell}(T)$, we define the distance of a node $\alpha\in \mathbb T_k^n$ to $f$ as
\begin{equation*}
\dist(\alpha, f) :=| \alpha_{f^*} | = \sum_{i\in f^*} \alpha_i.
\end{equation*}
Next we present a geometric interpretation of $\dist(\alpha, f)$.
We set the vertex $\texttt{v}_{f(0)}$ as the origin and embed the lattice to the scaled reference simplex $k\hat T$. Then $|\alpha_{f^*}| = s$ corresponds to the linear equation
$$
x_{f^*(1)} + x_{f^*(2)} + \ldots + x_{f^*(n - \ell)} = s,
$$
which defines a hyper-plane in $\mathbb R^n$, denoted by $L(f,s)$, with a normal vector $\bs 1_{f^*}$.
The simplex $f$ can be thought of as convex combination of vectors $\{\bs e_{f(0)f(i)}\}_{i=1}^{\ell}$. Obviously $\bs 1_{f^*}\cdot \bs e_{f(0)f(i)} = 0$ as the zero pattern is complementary to each other. So $f$ is parallel to the hyper-plane $L(f,s)$. The distance $\dist(\alpha, f)$ for $\alpha\in L(f,s)$ is the intercept of the hyper-plane $L(f,s)$; see Fig. \ref{fig:dist} for an illustration.
%
 In particular $f\in L(f,0)$ and $\lambda_{i}|_f = 0$ for $i\in f^*$. Indeed $f = \{x\in T\mid \lambda_i(x) = 0, i\in f^*\}$.
 

We can extend the definition to the distance between sub-simplexes. For $e\in \Delta_{\ell}(T),f \in \Delta(T)$, define $$\dist(e,f) = \min_{\alpha \in \mathbb T_k^{\ell}(e)} \dist (\alpha, f). $$
Then it is easy to verify that: for $e\in \Delta(f^*)$, $\dist (e,f) = k$ and for $e\in \Delta(f)$, i.e., $e\cap f \neq \varnothing,$ then $\dist (e,f) = 0$. 
%
%
%

\begin{figure}[htbp]
\subfigure[Distance to an edge.]{
\begin{minipage}[t]{0.5\linewidth}
\centering
\includegraphics*[height=4.45cm]{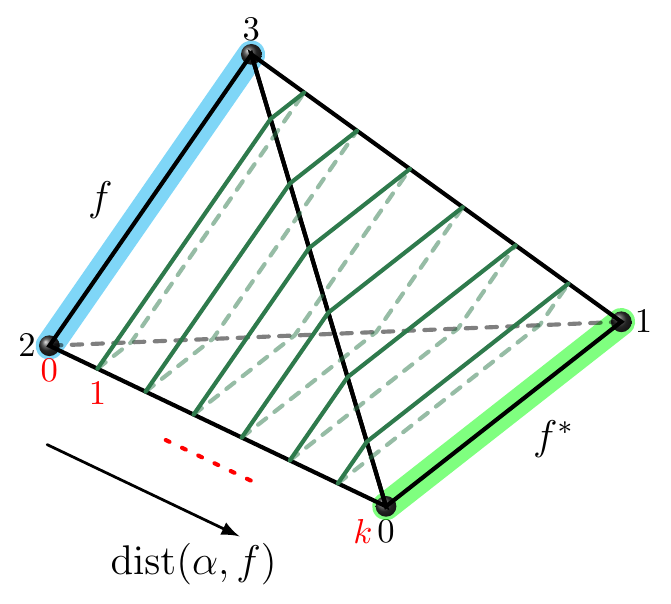}
\end{minipage}}
\subfigure[Distance to a face.]
{\begin{minipage}[t]{0.5\linewidth}
\centering
\includegraphics*[height=3.8cm]{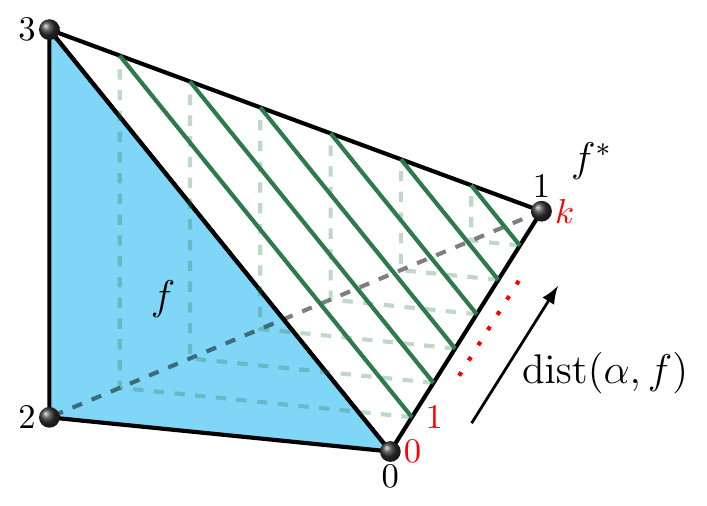}
\end{minipage}}
\caption{Distance to a sub-simplex.}
\label{fig:dist}
\end{figure}


We define the lattice tube of $f$ with radius $r$ as
$$
D(f, r) = \{ \alpha \in \mathbb T^{n}_k, \dist(\alpha,f) \leq r\},
$$
which contains lattice nodes at most $r$ distance away from $f$. We overload the notation
$$
L(f,s) = \{ \alpha \in \mathbb T^{n}_k, \dist(\alpha,f) = s\},
$$
which is defined as a plane early but here is a subset of lattice nodes on this plane. Then by definition, 
$$
D(f, r) = \cup_{s=0}^r L(f,s), \quad L(f,s) = L(f^*, k - s). 
$$
We have the following characterization of $D(f, r)$.
\begin{lemma} \label{lm:dist}
For lattice node $\alpha \in \mathbb T^{n}_k$,
\begin{align*}
\alpha \in D(f, r)  &\iff  |\alpha_{f^*}| \leq r \iff | \alpha_{f} | \geq k - r,\\
\alpha \notin D(f, r) & \iff  |\alpha_{f^*}| > r \iff | \alpha_{f} | \leq k - r - 1.
\end{align*}
\end{lemma}
\begin{proof}
By definition of $\dist(\alpha, f)$ and the fact $| \alpha_{f} |  + | \alpha_{f^*} | = k$.
\end{proof}

For each vertex $\texttt{v}_i\in \Delta_0(T)$,
$$
D(\texttt{v}_i, r) =  \{ \alpha \in \mathbb T^{n}_k, |\alpha_{i^*}| \leq r \},
$$
which is isomorphic to a simplicial lattice $\mathbb T_{r}^n$ of degree $r$; see the green triangle in Fig. \ref{fig:lattice}. For a face $F\in \Delta_{n-1}(T)$, $D(F,r)$ is a trapezoid of height $r$ with base $F$. In general for $f\in \Delta_{\ell}(T)$, the hyper plane  $L(f, r)$ will cut the simplex $T$ into two parts, and $D(f,r)$ is the part containing $f$.

\subsection{Derivative and distance}
The distance of a node $\alpha$ to a sub-simplex $f$ can be used to control the derivative of the corresponding Bernstein polynomial.
\begin{lemma}\label{lm:derivative}
Let $f\in \Delta_{\ell}(T)$ be a sub-simplex of $T$. For $\alpha\in \mathbb T^{n}_k, \beta \in \mathbb N^{1:n}$, and $|\alpha_{f^*}| > |\beta|$, i.e., $\dist(\alpha, f) > |\beta|$, then
$$
D^{\beta} \lambda^{\alpha}|_{f} = 0.
$$
\end{lemma}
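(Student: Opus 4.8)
The plan is to reduce the statement to a coordinate computation on the reference simplex, where the distance function becomes a sum of coordinates and the derivatives $D^\beta$ become ordinary partial derivatives. First I would set the vertex $\texttt{v}_{f(0)}$ as the origin and embed $T$ into the reference simplex so that the coordinates $x_i$ ($i=1,\ldots,n$) are the barycentric coordinates $\lambda_i$ for $i\neq f(0)$, while $\lambda_{f(0)} = 1 - \sum_{i\neq f(0)} x_i$. Under this choice, the sub-simplex $f$ is exactly the affine set $\{x : x_j = 0 \text{ for all } j\in f^*\}$, as recorded in the excerpt ($f = \{x\in T \mid \lambda_i(x)=0, i\in f^*\}$). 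Writing the multi-index $\beta\in\mathbb N^{1:n}$ as a differential operator $D^\beta = \partial_1^{\beta_1}\cdots\partial_n^{\beta_n}$, the whole claim is: if $|\alpha_{f^*}| > |\beta|$, then $\partial^\beta$ applied to $\lambda^\alpha$ still has a factor that vanishes on $f$.

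The key step is to track the total power of the "$f^*$-variables'' through differentiation. Decompose $\alpha = \alpha_f + \alpha_{f^*}$ as in \eqref{eq:decalpha}, so that $\lambda^\alpha = \lambda_f^{\alpha_f}\,\lambda_{f^*}^{\alpha_{f^*}}$, and note $\lambda_{f^*}^{\alpha_{f^*}} = \prod_{j\in f^*} x_j^{\alpha_j}$ has total degree $|\alpha_{f^*}| = \dist(\alpha,f)$ in the variables $\{x_j : j\in f^*\}$. The factor $\lambda_f^{\alpha_f}$ involves the variables indexed by $f\setminus\{f(0)\}$ together with $\lambda_{f(0)} = 1-\sum_{i} x_i$; the latter does contain $f^*$-variables, but crucially $\lambda_{f(0)}|_f = \lambda_{f(0)}(x)$ with all $f^*$-coordinates zero, which is generally nonzero — so differentiating $\lambda_f^{\alpha_f}$ with respect to an $f^*$-variable lowers the power of $\lambda_{f(0)}$ but does \emph{not} introduce vanishing behavior on $f$. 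Hence the only source of a factor vanishing on $f$ is the monomial $\lambda_{f^*}^{\alpha_{f^*}}$: writing $D^\beta = D^{\beta_f} D^{\beta_{f^*}}$ (splitting $\beta$ into its $f$- and $f^*$-parts by the obvious restriction of indices), by the Leibniz rule every term in $D^\beta\lambda^\alpha$ contains a factor $D^{\gamma}\lambda_{f^*}^{\alpha_{f^*}}$ for some multi-index $\gamma$ supported on $f^*$ with $|\gamma|\le|\beta_{f^*}|\le|\beta| < |\alpha_{f^*}|$. Each such derivative of the monomial $\prod_{j\in f^*} x_j^{\alpha_j}$ (total degree $|\alpha_{f^*}|$) is again a monomial in the $f^*$-variables of positive total degree $|\alpha_{f^*}| - |\gamma| \ge 1$, hence divisible by at least one $x_j$ with $j\in f^*$, hence vanishes on $f$. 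Multiplying by the remaining (bounded) factors and restricting to $f$ kills every term, giving $D^\beta\lambda^\alpha|_f = 0$.

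The main obstacle is the bookkeeping around $\lambda_{f(0)}$: because $\lambda_{f(0)} = 1 - \sum_{i=1}^n x_i$ depends on the $f^*$-coordinates too, one must argue carefully that differentiating the $\lambda_f^{\alpha_f}$ factor with respect to $f^*$-variables never helps and never hurts — it only shuffles powers of $\lambda_{f(0)}$ and produces bounded contributions, so the vanishing is forced entirely by the $\lambda_{f^*}^{\alpha_{f^*}}$ factor. An alternative, perhaps cleaner, route that sidesteps this is to avoid coordinates altogether: observe that $\lambda^\alpha = \lambda_{f^*}^{\alpha_{f^*}} \cdot q$ where $q=\lambda_f^{\alpha_f}\in\mathbb P_{|\alpha_f|}(T)$, apply the general Leibniz rule for $D^\beta$ of a product, and use that any mixed partial $D^\gamma \lambda_{f^*}^{\alpha_{f^*}}$ with $|\gamma|<|\alpha_{f^*}|$ still lies in the ideal generated by $\{\lambda_j : j\in f^*\}$ — an ideal whose elements all vanish on $f$ by the identity $f = \{x\in T \mid \lambda_i(x)=0,\ i\in f^*\}$. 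I would present the coordinate version for concreteness but remark that it is really this ideal-membership statement that is doing the work.
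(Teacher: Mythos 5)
Your proposal is correct and follows essentially the same route as the paper: write $\lambda^{\alpha}=\lambda_f^{\alpha_f}\lambda_{f^*}^{\alpha_{f^*}}$ and observe that after at most $|\beta|<|\alpha_{f^*}|$ differentiations every Leibniz term still carries a positive power of some $\lambda_j$ with $j\in f^*$, which vanishes on $f$. The paper's proof is just a terser version of this; your extra bookkeeping around $\lambda_{f(0)}$ and the ideal-membership remark are careful elaborations of the same idea, not a different argument.
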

\begin{proof}
For $\alpha\in \mathbb T^{n}_k$, we write $\lambda^{\alpha} = \lambda_{f}^{\alpha_f}\lambda_{f^*}^{\alpha_{f^*}}$. When $|\alpha_{f^*}| > |\beta|$, the derivative $D^{\beta} \lambda^{\alpha}$ will contain a factor $\lambda_{f^*}^{\gamma}$ with $\gamma\in \mathbb N^{1:n-\ell},$ and $|\gamma| = |\alpha_{f^*}| - |\beta| > 0$. Therefore $D^{\beta} \lambda^{\alpha}|_{f} = 0$ as $\lambda_{i}|_f = 0$ for $i\in f^*$.
\end{proof}

\section{Lagrange Finite Elements}\label{sec:lagrange}


For the polynomial space $\mathbb P_k(T)$ with $k\geq 1$ on an $n$-dimensional simplex $T$, we have the following decomposition of Lagrange element~\cite[(2.6)]{ArnoldFalkWinther2009}
\begin{align}
\label{eq:Prdec}
\mathbb P_k(T) &= \Oplus_{\ell = 0}^n\Oplus_{f\in \Delta_{\ell}(T)} b_f\mathbb P_{k - (\ell +1)} (f).
\end{align}
The function $u\in \mathbb P_k(T)$ is uniquely determined by DoFs
\begin{equation}\label{eq:dofPr}
\int_f u \, p \dd s \quad \forall~p\in \mathbb P_{k - (\ell +1)} (f), f\in \Delta_{\ell}(T), \ell = 0,1,\ldots, n.
\end{equation}
The integral at a vertex is understood as the function value at that vertex. 

We shall derive \eqref{eq:Prdec} and \eqref{eq:dofPr} from a geometric decomposition of the simplicial lattice.
We use $A\oplus B$ to denote the union of two disjoint sets $A$ and $B$, i.e. $A\oplus B = A\cup B$ with property $A\cap B=\varnothing$. This notation is meant to be suggestive of the fact that  $|A\oplus B| = |A| + |B|$, where $|\cdot |$ of a set is its cardinality.

\begin{lemma}It holds that
\begin{equation}\label{eq:latticedec}
\mathbb T^{n}_k(T) = \Oplus_{\texttt{v}\in \Delta_0(T)} \mathbb T^{0} _k(\texttt{v}) \,\oplus\,\Oplus_{\ell = 1}^n \Oplus_{f\in \Delta_{\ell}(T)}\mathbb T^{\ell}_{k}(\stackrel{\circ}{f}).
\end{equation}
\end{lemma}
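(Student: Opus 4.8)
The plan is to partition $\mathbb T^{n}_k(T)$ according to the \emph{support} of each lattice node. For $\alpha \in \mathbb T^{n}_k$ set $\supp(\alpha) = \{\, i\in \{0,1,\ldots,n\} \mid \alpha_i > 0 \,\}$. Since $|\alpha| = k \geq 1$, the set $\supp(\alpha)$ is nonempty, so there is a unique sub-simplex $f = f(\alpha) \in \Delta_{\ell}(T)$, with $\ell = |\supp(\alpha)| - 1 \in \{0,1,\ldots,n\}$, whose vertex index set is exactly $\supp(\alpha)$. I would then observe that $\alpha_j = 0$ for every $j \notin f$, so by the direct decomposition $\alpha = \alpha_f + \alpha_{f^*}$ from \eqref{eq:decalpha} we have $\alpha_{f^*} = 0$ and $\alpha = \alpha_f$ with $\alpha_f \in \mathbb T^{\ell}_k(f)$; moreover $\alpha_f \geq 1$ on all coordinates of $f$ by definition of the support, hence $\alpha_f \in \mathbb T^{\ell}_{k,1}(f) = \mathbb T^{\ell}_k(\stackrel{\circ}{f})$.

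Next I would check that the assignment $\alpha \mapsto (f(\alpha), \alpha_f)$ is a bijection from $\mathbb T^{n}_k(T)$ onto $\bigsqcup_{\ell=0}^{n} \bigsqcup_{f \in \Delta_{\ell}(T)} \mathbb T^{\ell}_k(\stackrel{\circ}{f})$, where each $\mathbb T^{\ell}_k(\stackrel{\circ}{f})$ is viewed as a subset of $\mathbb T^{n}_k(T)$ through the extension operator $E$. Injectivity is immediate, since $f(\alpha)$ together with $\alpha_f$ recovers $\alpha = E(\alpha_f)$. For surjectivity, given any $f \in \Delta_{\ell}(T)$ and any $\beta \in \mathbb T^{\ell}_k(\stackrel{\circ}{f}) = \mathbb T^{\ell}_{k,1}(f)$, its extension $E(\beta) \in \mathbb T^{n}_k$ has $\supp(E(\beta)) = f$ exactly, because $\beta$ is strictly positive on every coordinate of $f$ while $E(\beta)$ vanishes off $f$; thus $f(E(\beta)) = f$ and $E(\beta)_f = \beta$, so $E(\beta)$ lies in the image.

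Finally, disjointness of the pieces is precisely the statement that the $\supp$-fibres are disjoint: if $f \neq f'$, then a node in $\mathbb T^{\ell}_k(\stackrel{\circ}{f})$ has support $f$ and a node in $\mathbb T^{\ell'}_k(\stackrel{\circ}{f'})$ has support $f'$, so the two cannot coincide in $\mathbb T^{n}_k(T)$; and for a fixed $f$ the extension $E$ is injective. Combining these observations, $\mathbb T^{n}_k(T)$ is the disjoint union of the sets $\mathbb T^{\ell}_k(\stackrel{\circ}{f})$ over all $f \in \Delta_{\ell}(T)$, $\ell = 0, 1, \ldots, n$; separating the term $\ell = 0$ and using that a vertex has empty boundary, so $\mathbb T^{0}_k(\stackrel{\circ}{\texttt{v}}) = \mathbb T^{0}_k(\texttt{v})$, gives \eqref{eq:latticedec}. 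The argument is essentially bookkeeping; the only point needing care is the identification of the abstract lattice $\mathbb T^{\ell}_k(\stackrel{\circ}{f})$ with its image under $E$ inside $\mathbb T^{n}_k(T)$, so that the claimed disjoint union is genuinely a statement about subsets of one ambient set — which is exactly what the support characterization supplies.
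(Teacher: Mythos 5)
Your proof is correct, but it takes a different route from the paper's. The paper's argument is a two-step "disjoint plus count": it notes that the open faces $\stackrel{\circ}{f}$ are geometrically disjoint, hence so are the sets $\mathbb T^{\ell}_{k}(\stackrel{\circ}{f})$, and then verifies that the union exhausts $\mathbb T^{n}_k(T)$ by summing cardinalities via the isomorphism $\mathbb T^{\ell}_{k}(\stackrel{\circ}{f})\cong \mathbb T^{\ell}_{k-(\ell+1)}$, i.e.\ it implicitly invokes the identity $\sum_{\ell=0}^{n}\binom{n+1}{\ell+1}\binom{k-1}{\ell}=\binom{n+k}{k}$. You instead exhibit the partition explicitly: every $\alpha$ with $|\alpha|=k\geq 1$ has nonempty support, the support singles out a unique $f\in\Delta_{\ell}(T)$, and $\alpha$ restricted to $f$ is strictly positive, so $\alpha\in\mathbb T^{\ell}_{k,1}(f)=\mathbb T^{\ell}_{k}(\stackrel{\circ}{f})$ for exactly that $f$. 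This is purely combinatorial and needs no counting; as a byproduct it furnishes a bijective proof of the binomial identity the paper's argument consumes. The one hypothesis you use that the paper's version of the statement leaves implicit is $k\geq 1$ (so that the support is nonempty), but that is assumed at the start of the section, so there is no gap. Your care in distinguishing the abstract lattice $\mathbb T^{\ell}_k(\stackrel{\circ}{f})$ from its image under the extension operator $E$ inside $\mathbb T^{n}_k(T)$ is also a point the paper glosses over.
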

\begin{proof}
As $\stackrel{\circ}{f}$ are disjoint, so is $\mathbb T^{\ell}_{k}(\stackrel{\circ}{f})$. Then count the cardinality using the isomorphism $\mathbb T^{\ell}_{k}(\stackrel{\circ}{f})\cong \mathbb T^{\ell}_{k-(\ell +1)}$ to finish the proof.
\end{proof}

\begin{figure}[htbp]
\label{fig:decLH}
\subfigure[The decomposition for a Lagrange element]{
\begin{minipage}[t]{0.5\linewidth}
\centering
\includegraphics*[width=5.25cm]{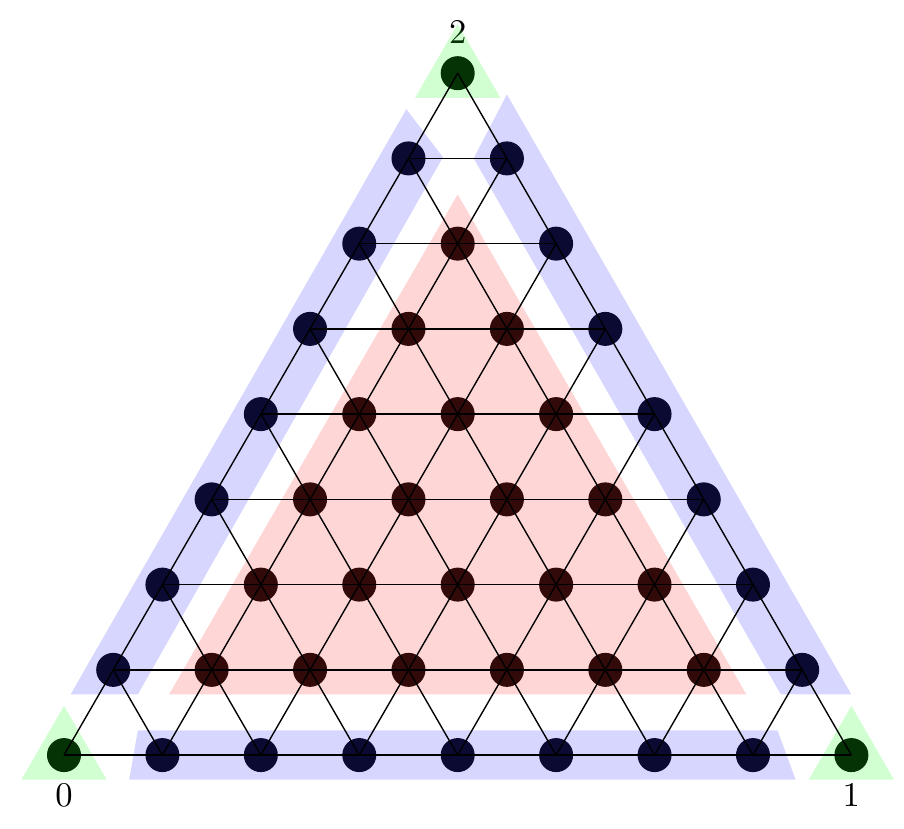}
\end{minipage}}
\subfigure[The decomposition for a Hermite element]
{\begin{minipage}[t]{0.5\linewidth}
\centering
\includegraphics*[width=5.25cm]{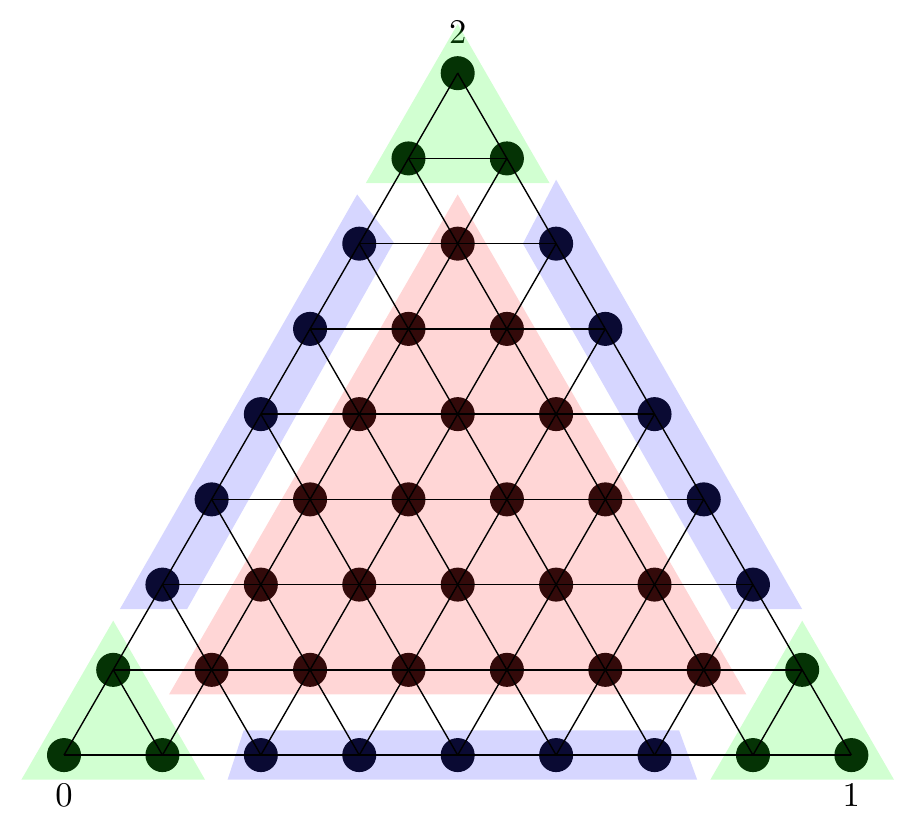}
\end{minipage}}
\caption{Decomposition of Lagrange and Hermite elements in two dimensions.}
\end{figure}


We can rewrite the decomposition \eqref{eq:Prdec} of polynomial and decomposition \eqref{eq:dofPr} of DoFs once we have the lattice decomposition \eqref{eq:latticedec}. The unisolvence depends on a property of the bubble function.
\begin{lemma}\label{lm:bf}
Let $f\in \Delta_{\ell}(T)$ and $b_f = \lambda_f$. For all $e\in \Delta_{m}(T)$ with $m\leq \ell$ and $e\neq f$ when $m=\ell$, then $b_f|_{e} = 0$.
\end{lemma}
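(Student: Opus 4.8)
The plan is to prove the statement $b_f|_e = 0$ by reducing it to the elementary fact that $\lambda_i|_e = 0$ whenever $i \notin e$ (viewing $e$ as a subset of $\{0,1,\ldots,n\}$), which was already observed in the excerpt in the form $e = \{x\in T \mid \lambda_j(x) = 0,\ j\in e^*\}$. Recall that $b_f = \lambda_f = \prod_{i=0}^{\ell}\lambda_{f(i)}$, a product over the index set $f$. So it suffices to exhibit, for each $e$ under consideration, at least one index $i \in f$ with $i \notin e$; then the single factor $\lambda_i$ already vanishes identically on $e$, hence so does the whole product $b_f$.

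The key step is therefore the combinatorial claim: if $e\in\Delta_m(T)$ with $m\le\ell$, and moreover $e\ne f$ in the case $m=\ell$, then $f\not\subseteq e$. I would argue this by a cardinality count on the index sets. Since $|f| = \ell+1$ and $|e| = m+1$, the inclusion $f\subseteq e$ would force $\ell+1 \le m+1$, i.e. $\ell\le m$; combined with the hypothesis $m\le\ell$ this gives $m=\ell$, whence $|f|=|e|$ and the inclusion $f\subseteq e$ becomes equality $f=e$. But $f=e$ is excluded by hypothesis precisely in the case $m=\ell$. Hence $f\not\subseteq e$ in all allowed cases, so there exists $i\in f\setminus e$, equivalently $i\in f$ with $i\in e^*$.

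Finally I would assemble the pieces: pick such an index $i\in f$ with $i\notin e$. By the characterization of $e$ recalled above, $\lambda_i|_e \equiv 0$. Since $b_f = \lambda_i\cdot\prod_{j\in f,\, j\ne i}\lambda_{f(j)}$ contains $\lambda_i$ as a factor, we conclude $b_f|_e = \lambda_i|_e \cdot \big(\prod_{j\in f,\, j\ne i}\lambda_{f(j)}\big)\big|_e = 0$, as desired.

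I do not expect any serious obstacle here; the only thing to be careful about is the bookkeeping between $f$ as a geometric simplex and $f$ as a subset of $\{0,\dots,n\}$, and making sure the edge case $m=\ell$ (where the extra hypothesis $e\ne f$ is genuinely needed) is handled cleanly rather than swept under the rug. One could also phrase the whole argument in purely lattice-theoretic language via $\mathbb{T}^m_k(e)$ and the condition $|\alpha_{e^*}|>0$ forcing $\lambda^\alpha|_e=0$, but the direct argument through the single barycentric factor $\lambda_i$ is shorter and self-contained.
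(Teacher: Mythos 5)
Your proof is correct and follows essentially the same route as the paper: both reduce the claim to showing $f\cap e^*\neq\varnothing$ (equivalently $f\not\subseteq e$), which forces a factor $\lambda_i$ of $b_f$ with $i\in e^*$ to vanish on $e$. Your cardinality argument for $f\not\subseteq e$ is just a slightly more explicit version of the paper's contradiction step, so there is nothing to add.
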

\begin{proof}
We claim $f\cap e^* \neq \varnothing$. Assume $f\cap e^* = \varnothing$. Then $f^*\cup e = \{0,1,\ldots, n\}$ and thus $f\subseteq e$ which contradicts with either $m< \ell$ or $e\neq f$. 

As $f\cap e^* \neq \varnothing$, then $\lambda_f$ contains $\lambda_i$ for some $i\in e^*$ and consequently $\lambda_f|_e = 0$. 
\end{proof}

\begin{lemma}
We have the following decomposition
 \begin{equation}\label{eq:decPkT}
\mathbb P_k(T) = \Oplus_{\texttt{v}\in \Delta_0(T)} \mathbb P_k(\mathbb T^{0} _k(\texttt{v})) \,\oplus\, \Oplus_{\ell = 1}^n \Oplus_{f\in \Delta_{\ell}(T)} \mathbb P_k(\mathbb T^{\ell}_{k}(\stackrel{\circ}{f})).
\end{equation}
The function $u\in \mathbb P_k(T)$ is uniquely determined by DoFs
\begin{align*}
u(\texttt{v}_i) &\qquad \texttt{v}_i \in \Delta_{0}(T),\\
\int_f u \, \lambda_f^{\alpha_f} \dd s &\qquad  \alpha_f \in \mathbb T^{\ell}_{k-(\ell + 1)}(f), f\in \Delta_{\ell}(T), \ell = 1,\ldots, n.
\end{align*}
\end{lemma}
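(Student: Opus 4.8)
The plan is to derive the polynomial decomposition \eqref{eq:decPkT} directly from the lattice decomposition \eqref{eq:latticedec} via the Bernstein basis, and then to establish unisolvence of the stated DoFs by a dimension count together with a triangular (block-lower-triangular) structure argument that relies on Lemma~\ref{lm:bf}. The two halves are essentially independent, so I would treat them in order.

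First I would prove \eqref{eq:decPkT}. Applying $\spa\{\lambda^\alpha : \alpha\in\cdot\,\}$ to both sides of \eqref{eq:latticedec} turns the disjoint union of index sets into a sum of the corresponding Bernstein-polynomial subspaces; since the full Bernstein family $\{\lambda^\alpha:\alpha\in\mathbb T^n_k\}$ is a basis of $\mathbb P_k(T)$, this sum is automatically \emph{direct} as a vector-space decomposition, and each summand $\mathbb P_k(\mathbb T^0_k(\texttt{v}))$ resp.\ $\mathbb P_k(\mathbb T^\ell_k(\stackrel{\circ}{f}))$ matches the definition $\mathbb P_k(S)=\spa\{\lambda^\alpha:\alpha\in S\}$. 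It is worth remarking that $\mathbb P_k(\mathbb T^0_k(\texttt{v}_i))=\spa\{\lambda_i^k\}$ is one-dimensional, and that via the isomorphism $\mathbb T^\ell_k(\stackrel{\circ}{f})\cong\mathbb T^\ell_{k-(\ell+1)}(f)$ one has $\mathbb P_k(\mathbb T^\ell_k(\stackrel{\circ}{f}))=b_f\,\mathbb P_{k-(\ell+1)}(f)$, so that \eqref{eq:decPkT} is literally \eqref{eq:Prdec} rewritten; I would state this but not belabor it.

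Next, for unisolvence, I would first check that the number of DoFs equals $\dim\mathbb P_k(T)$: there is one DoF per vertex and, for each $f\in\Delta_\ell(T)$ with $\ell\ge 1$, exactly $|\mathbb T^\ell_{k-(\ell+1)}(f)|=\dim\mathbb P_{k-(\ell+1)}(f)$ DoFs, which by \eqref{eq:latticedec} and the cardinality count in its proof sums to $|\mathbb T^n_k(T)|=\dim\mathbb P_k(T)$. Hence it suffices to show that a function $u\in\mathbb P_k(T)$ annihilated by all the DoFs is zero. I would order the sub-simplices by dimension, $\ell=0,1,\dots,n$, and argue by induction on $\ell$ that $u$ vanishes on the $\ell$-skeleton and, more precisely, that the component of $u$ in each $b_f\,\mathbb P_{k-(\ell+1)}(f)$ is zero. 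The base case $\ell=0$: evaluating the vertex DoFs and using that $\lambda^\alpha(\texttt{v}_i)=\delta$-type (nonzero only for $\alpha=k\bs e_i$) kills the vertex components. For the inductive step at level $\ell$, restrict $u$ to a fixed $f\in\Delta_\ell(T)$: by Lemma~\ref{lm:bf} every bubble $b_e$ with $e$ of dimension $\le\ell$, $e\ne f$, and all lower-dimensional bubbles already handled, vanish on $f$, so $u|_f$ reduces to its $b_f\,\mathbb P_{k-(\ell+1)}(f)$ part, say $b_f\,p$ with $p\in\mathbb P_{k-(\ell+1)}(f)$; the DoFs $\int_f u\,\lambda_f^{\alpha_f}\,\dd s$ then read $\int_f b_f\,p\,\lambda_f^{\alpha_f}\,\dd s=0$ for all $\alpha_f\in\mathbb T^\ell_{k-(\ell+1)}(f)$. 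Since $b_f>0$ on $\stackrel{\circ}{f}$ and $\{b_f\lambda_f^{\alpha_f}\}$ spans $b_f\,\mathbb P_{k-(\ell+1)}(f)$, testing against this spanning set (which also contains $p$ up to the $b_f$ weight, or alternatively using positivity of $b_f$ as an $L^2(f)$ weight) forces $p=0$; thus $u$ vanishes to the required order on $f$, completing the induction. At $\ell=n$ this yields $u\equiv 0$ on $T$.

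The main obstacle is the inductive restriction step: one must be careful that restricting $u$ to $f$ genuinely isolates the single summand $b_f\,\mathbb P_{k-(\ell+1)}(f)$. This is exactly where Lemma~\ref{lm:bf} is needed — it guarantees $b_e|_f=0$ for all $e\in\Delta_m(T)$ with $m\le\ell$, $e\ne f$ — but one also needs the inductive hypothesis to have already eliminated the components attached to sub-simplices $e$ with $\dim e<\ell$ \emph{before} restricting, and to observe that components attached to $e$ with $\dim e>\ell$ simply do not arise since their bubbles $b_e$ carry a factor $\lambda_i$, $i\in e^*$, that could be nonzero on $f$ — so the cleaner route is to peel off contributions strictly by increasing dimension and note $b_e|_f=0$ whenever $f\subsetneq e$ as well, i.e.\ $f\cap e^*\neq\varnothing$, which is again Lemma~\ref{lm:bf} read with the roles reversed. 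Once the bookkeeping of which bubbles survive on $f$ is pinned down, the remaining step — $\int_f b_f p q = 0$ for all $q\in\mathbb P_{k-(\ell+1)}(f)$ implies $p=0$ — is the standard positive-weight $L^2$ argument and is routine.
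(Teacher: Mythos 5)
Your proposal is correct and follows essentially the same route as the paper: the polynomial decomposition is the Bernstein-basis image of the lattice decomposition \eqref{eq:latticedec}, and unisolvence comes from a dimension count plus the block lower triangular structure furnished by Lemma~\ref{lm:bf}, with the diagonal blocks invertible because $b_f>0$ on $\stackrel{\circ}{f}$. The only wobble is that the vanishing you actually need in the inductive step is $b_e|_f=0$ for $\dim e\ge \ell$ and $e\ne f$ (equivalently $e\cap f^*\neq\varnothing$), not for $\dim e\le\ell$ as first written — your closing paragraph already diagnoses and corrects this, with the components attached to sub-simplices of dimension $<\ell$ eliminated by the induction hypothesis rather than by restriction, exactly as in the paper's triangularity argument.
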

\begin{proof}
For completeness, we adapt the unisolvence proof in \cite{ChenHuang2021} to here. We choose a basis $\{\phi_i \}$ of $\mathbb P_k(T) $ by the decomposition \eqref{eq:decPkT} and denote DoFs as $\{ N_i\}$. By construction, the dimension of $\{\phi_i \}$ matches the number of DoFs $\{ N_i\}$. The square matrix $(N_i(\phi_j))$ is block lower triangular in the sense that for $\phi_{f}\in  \mathbb P_k(\mathbb T^{\ell}_{k}(\stackrel{\circ}{f})) = b_f\mathbb P_{k - (\ell +1)} (f),$ 
$$
\int_{e}\phi_{f} p\dd s=  0, \quad \forall e \in \Delta(T)\text{ with } \dim e \leq \ell, e\neq f, p\in \mathbb P_{k-\dim e + 1}(e)
$$
by Lemma \ref{lm:bf}. The diagonal block is invertible as $b_f: \mathbb P_{k-(\ell + 1)}(f) \to b_f \mathbb P_{k-(\ell + 1)}(f)$ is an isomorphism and $b_f > 0$ in $\stackrel{\circ}{f}$. 

So the unisolvence follows from the invertibility of this lower triangular matrix. 
\end{proof}

\begin{remark}\rm
The polynomial space at vertices can be changed to $\mathbb P_1(T) = \spa\{ \lambda_i, i = 0,1,\ldots n\}$.
A hierarchical decomposition sorted by degree can be obtained which may have smaller condition number for the assembled mass and stiffness matrices. But this is not the focus of the current paper.
\end{remark}

Let $\{\mathcal {T}_h\}$ be a family of partitions
of $\Omega$ into nonoverlapping simplexes with $h_K:=\mbox{diam}(K)$ and $h:=\max_{K\in \mathcal {T}_h}h_K$.
Let $\Delta_{\ell}(\mathcal T_h)$ be the set of all $\ell$-dimensional simplexes
of $\mathcal {T}_h$ for $\ell=0, 1, \ldots, n$. The mesh $\mathcal T_h$ is conforming in the sense that the intersection of any two simplexes is a common lower sub-simplex. The global Lagrange finite element space $V_h^{\rm L}$ can be defined as
$$
V_h^{\rm L} = \Oplus_{\texttt{v}\in \Delta_0(\mathcal T_h)} \mathbb P_k(\mathbb T^{0} _k(\texttt{v})) \,\oplus\, \Oplus_{\ell = 1}^n \Oplus_{f\in \Delta_{\ell}(\mathcal T_h)} \mathbb P_k(\mathbb T^{\ell}_{k,1}(f)).
$$
Here we extend the polynomial on $f$ to each element $T$ containing $f$ by the Bernstein form in the barycentric coordinate. Consequently the dimension of $V_h^{\rm L}$ is
$$
V_h^{\rm L} = \sum_{\ell = 0}^n |\Delta_{\ell}(\mathcal T_h)| { k - 1 \choose \ell},
$$
where $|\Delta_{\ell}(\mathcal T_h)|$ is the cardinality of  number of $\Delta_{\ell}(\mathcal T_h)$, i.e., the number of $\ell$-dimensional simplices in $\mathcal T_h$.


\section{Hermite Finite Elements}\label{sec:hermite}
In this section we will show the geometric decomposition of Hermite finite elements in arbitrary dimension. When $n>1$, the Hermite finite element space is $C^0$-conforming only but $C^m$ continuous at vertices. The degree of polynomial satisfies $k \geq 2m+1$.

\subsection{Hermite spline in one dimension}
 For edge $e$ with vertices $\texttt{v}_0$ and $\texttt{v}_1$, as $k \geq 2m+1$, $D(\texttt{v}_0, m)\cap D(\texttt{v}_1, m) = \varnothing$. Recall that
$$
\mathbb T_{k, m+1}^{1}(e) = \left \{ \alpha_e \in \mathbb T^{1}_{k}(e), \alpha_e \geq m+1 \right \} = \mathbb T^{1}_{k}(e) \backslash [\Oplus_{i=0}^1D(\texttt{v}_i, m)].
$$
We then have the following decomposition of lattice $\mathbb T^{1}_k(e)$
\begin{equation*}
\mathbb T^{1}_k (e)= \Oplus_{i=0}^1D(\texttt{v}_i, m) \Oplus \, \mathbb T_{k, m+1}^{1}(e).
\end{equation*}
This leads to the decomposition of polynomial space
$$
\mathbb P_k(e) = \Oplus_{i=0}^1 \mathbb P_{k}(D(\texttt{v}_i, m) ) \Oplus \mathbb P_{k}( \mathbb T^{1}_{k,m+1}(e)),
$$
and DoFs
\begin{align}\notag
D^{\alpha} u (\texttt{v}) & \quad \alpha \in \mathbb N, 0\leq \alpha \leq  m, \texttt{v}\in \Delta_0(e),\\
\label{eq:1DHerm2}
\int_e u \, \lambda_{e}^{\alpha_e} \,  \dd s & \quad \quad ~\alpha_e\in \mathbb T^{1}_{k,m+1}(e),
\end{align}
which is known as the Hermite spline in one dimension. Notice that as $b_e^{m+1} = \lambda_e^{m+1}$ is always positive in $\stackrel{\circ}{e}$, we can simplify the lattice set $\mathbb T^{1}_{k,m+1}(e)$ to $\mathbb T^{1}_{k - 2(m+1)}$ and replace \eqref{eq:1DHerm2} by
\begin{equation*}
\int_e u \, \lambda_{e}^{\alpha_e} \,  \dd s  \quad \quad ~\alpha_e\in  \mathbb T^{1}_{k - 2(m+1)}.
\end{equation*}

Given a mesh $\mathcal T_h$, the decomposition can be naturally extend to the whole mesh and DoFs are single valued at each vertex $\texttt{v}\in \Delta_0(T)$
\begin{equation} \label{eq:HermiteVh}
V^{\rm H}(\mathcal T_h) = \Oplus_{\texttt{v}\in \Delta_0(\mathcal T_h)} \mathbb P_{k}(D(\texttt{v}, m) ) \Oplus_{e\in \Delta_1(\mathcal T_h)} \mathbb P_{k}( \mathbb T^{1}_{k,m+1}(e)).
\end{equation}
As the derivative $D^{\alpha}u(\texttt{v})$ is continuous up to order $m$, $V^{\rm H}(\mathcal T_h)$ defined by \eqref{eq:HermiteVh} is a subspace of $C^{m}(\Omega)$. We shall generalize the Hermite spline to arbitrary dimension by imposing the $C^m$ continuity at vertices. Again we emphasize that, for $n>1$, the corresponding finite element space is no longer $C^m$-conforming.

\subsection{Derivatives at vertices}
Consider a function $u\in C^{m}(\Omega)$. The set of derivatives of order up to $m$ can be written as
$$
\{ D^{\alpha} u, \alpha \in \mathbb N^{1:n}, |\alpha |\leq m\}.
$$
Notice that the multi-index $\alpha \in \mathbb N^{1:n}$. We can add component $\alpha_0 = m - |\alpha|$. Then the index set forms a simplicial lattice $\mathbb T^{n}_m$ of degree $m$. For each vertex, we can use the small simplicial lattice $\mathbb T_m^{n} \cong D(\texttt{v}_i,m)$ to determine the derivatives at that vertex; see the green triangle in Fig. \ref{fig:decLH} (b) and Fig. \ref{fig:2Ddec} (a).

\begin{lemma}\label{lem:vertexunisolvence}
Let $i\in \{0,1,\ldots, n\}$. The polynomial space
\begin{equation*}
\mathbb P_k(D(\texttt{v}_i, m)): = \spa \left \{ \lambda^{\alpha}, \alpha  \in \mathbb T^{n}_k, \dist(\alpha, \texttt{v}_i) =  |\alpha_{i^*} | \leq m \right \},
\end{equation*}
is uniquely determined by the DoFs
\begin{equation}\label{eq:DvDoF}
\{ D^{\beta} u (\texttt{v}_i), \beta \in \mathbb N^{1:n}, |\beta | \leq  m\}.
\end{equation}
\end{lemma}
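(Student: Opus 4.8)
The plan is to prove unisolvence by the standard dimension-count-plus-injectivity argument. First I would check that the number of DoFs in \eqref{eq:DvDoF} equals $\dim \mathbb P_k(D(\texttt{v}_i, m))$. The index set of the DoFs is $\{\beta \in \mathbb N^{1:n}, |\beta|\le m\}$; adding the slack component $\beta_0 = m - |\beta|$ identifies it with the simplicial lattice $\mathbb T^n_m$, which has cardinality $\binom{n+m}{m}$. On the other side, by Lemma~\ref{lm:dist} the set $\{\alpha \in \mathbb T^n_k, |\alpha_{i^*}|\le m\}$ is exactly $D(\texttt{v}_i,m)$, and since the only constraint on $\alpha_{i^*} \in \mathbb N^{1:n}$ (or the appropriate $n$ components) is $|\alpha_{i^*}|\le m$ while $\alpha_i$ is then determined by $|\alpha|=k$, this set is in bijection with $\mathbb T^n_m$ as well; hence $\dim \mathbb P_k(D(\texttt{v}_i,m)) = \binom{n+m}{m}$, matching the count. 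Because the counts agree, it suffices to show the DoFs are injective on the space, i.e. if $u \in \mathbb P_k(D(\texttt{v}_i,m))$ satisfies $D^\beta u(\texttt{v}_i) = 0$ for all $|\beta|\le m$, then $u = 0$.

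For the injectivity step I would work in the reference-type coordinates centered at $\texttt{v}_i$: embed the lattice so that $\texttt{v}_i$ is the origin, so that the barycentric coordinates $\lambda_j$ for $j \in i^*$ become the Cartesian coordinates $x_j$ near $\texttt{v}_i$, and $\lambda_i = 1 - \sum_{j\in i^*} x_j$. Write $u = \sum_{\alpha \in D(\texttt{v}_i,m)} c_\alpha \lambda^\alpha = \sum_\alpha c_\alpha \lambda_i^{\alpha_i} \prod_{j\in i^*} x_j^{\alpha_j}$. The key observation is that the monomial $\lambda^\alpha$, as a function of $x = (x_j)_{j\in i^*}$, has a Taylor expansion at the origin whose lowest-order term is $\prod_{j\in i^*} x_j^{\alpha_j}$, of total degree $|\alpha_{i^*}| = \dist(\alpha,\texttt{v}_i)$. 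The assumption $D^\beta u(\texttt{v}_i)=0$ for $|\beta|\le m$ says precisely that the Taylor polynomial of $u$ at $\texttt{v}_i$ up to degree $m$ vanishes. Grouping the Bernstein basis functions by their value of $\dist(\alpha,\texttt{v}_i) = |\alpha_{i^*}|$ and inducting on this value from $0$ up to $m$, one sees that at each level the degree-$|\alpha_{i^*}|$ part of the Taylor expansion of $u$ is a nontrivial linear combination (with the already-known coefficients from lower levels contributing nothing new at this order) of the linearly independent monomials $\{\prod_{j\in i^*} x_j^{\alpha_j} : |\alpha_{i^*}| = s\}$; its vanishing forces all $c_\alpha$ with $|\alpha_{i^*}| = s$ to be zero. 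Since every $\alpha \in D(\texttt{v}_i,m)$ has $|\alpha_{i^*}|\le m$, this exhausts all coefficients and gives $u=0$.

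Equivalently, and perhaps more cleanly, I would phrase the injectivity as a triangularity statement: order the Bernstein basis $\{\lambda^\alpha : \alpha \in D(\texttt{v}_i,m)\}$ and the DoFs $\{D^\beta(\cdot)(\texttt{v}_i)\}$ both by the bijection $\alpha \leftrightarrow \beta = \alpha_{i^*}$ (so $|\beta| = \dist(\alpha,\texttt{v}_i)$), and show the matrix $M_{\beta,\alpha} = D^\beta \lambda^\alpha(\texttt{v}_i)$ is block triangular with invertible diagonal blocks when grouped by $|\beta| = |\alpha_{i^*}|$: if $|\beta| < |\alpha_{i^*}| = \dist(\alpha,\texttt{v}_i)$ then $D^\beta\lambda^\alpha(\texttt{v}_i) = 0$ by Lemma~\ref{lm:derivative} (applied with $f = \texttt{v}_i$, noting $\texttt{v}_i \in f$ so evaluation at $\texttt{v}_i$ is evaluation on $f$), while for $|\beta| = |\alpha_{i^*}|$ the only surviving contribution comes from differentiating each $x_j^{\alpha_j}$ factor exactly $\beta_j = \alpha_j$ times, giving $D^\beta \lambda^\alpha(\texttt{v}_i) = \prod_{j\in i^*}\alpha_j! \cdot \delta_{\beta,\alpha_{i^*}}$ (using $\lambda_i(\texttt{v}_i)=1$), so the diagonal blocks are in fact diagonal with nonzero entries.

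The main obstacle is the verification of the diagonal block, i.e. that for $|\beta| = \dist(\alpha,\texttt{v}_i)$ one has $D^\beta \lambda^\alpha(\texttt{v}_i) \neq 0$ while cross terms with the same $|\beta|$ but $\beta \neq \alpha_{i^*}$ vanish; this is a short but careful computation with the Leibniz rule applied to $\lambda_i^{\alpha_i}\prod_{j\in i^*} x_j^{\alpha_j}$, using that any derivative hitting the $\lambda_i^{\alpha_i}$ factor strictly lowers the available order and that to avoid killing a factor $x_j^{\alpha_j}$ by over- or under-evaluation at the origin one must differentiate it exactly $\alpha_j$ times. Lemma~\ref{lm:derivative} does most of the work for the strictly-below-diagonal vanishing, so the genuinely new content is this one local Taylor computation at the vertex; everything else is bookkeeping with the bijection $\alpha_{i^*}\leftrightarrow\beta$ and the cardinality count.
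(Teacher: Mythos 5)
Your proposal is correct and follows essentially the same route as the paper: match dimensions via the bijection $\alpha_{i^*}\leftrightarrow\beta$, then establish block-triangularity with respect to $|\alpha_{i^*}|=\dist(\alpha,\texttt{v}_i)$ using Lemma~\ref{lm:derivative} for the off-diagonal vanishing and the identity $D^{\beta}\lambda_{i^*}^{\alpha_{i^*}}(\texttt{v}_i)=\beta!\,\delta(\alpha_{i^*},\beta)$ for the diagonal blocks, concluding by induction on the order. Your choice of affine coordinates with $\lambda_j=x_j$ for $j\in i^*$ is exactly the paper's choice of the dual basis $\{l^j\}$ of edge vectors $e_{0j}$ (so that $D^\beta_n$ becomes the ordinary $D^\beta$), merely phrased as a change of variables rather than a change of derivative directions.
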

\begin{proof}
Obviously the dimensions match. Indeed, a one-to-one mapping is from $\alpha_{i^*}$ to $\beta$. So it suffices to show that for $u\in \mathbb P_k(D(\texttt{v}_i, m))$ if DoF \eqref{eq:DvDoF} vanishes, then $u = 0$.

Without loss of generality, we assume $i=0$. 
Clearly $\{\nabla \lambda_1, \ldots, \nabla \lambda_n\}$ forms a basis of $\mathbb R^n$.
We choose another basis $\{l^1, \ldots, l^{n}\}$ of $\mathbb R^n$, being dual to $\{\nabla \lambda_1, \ldots, \nabla \lambda_n\}$, i.e., $\nabla \lambda_{i}\cdot l^j = \delta_{i,j}$. Indeed $l^i$ is the edge vector $e_{0i}$. We can express the derivatives in this non-orthogonal basis and conclude that, for each $r=0,1,\ldots, m$, vanishing $\{D^{\beta}_nu, \beta \in \mathbb T_r^{n}(F_0)\}$ is equivalent to the vanishing $\{D^{\beta} u, \beta \in \mathbb T_r^{n}(F_0)\}$, where $D^{\beta}_nu:=\frac{\partial^{|\beta|} u}{\partial (l^1)^{\beta_1}\ldots\partial(l^n)^{\beta_n}}$. 
By the duality $\nabla \lambda_{i}\cdot l^j = \delta_{i,j}$, $i,j=1,\ldots, n$,
\begin{equation}\label{eq:Dn}
D^{\beta}_n (\lambda_{0^*}^{\alpha}) = \beta!\delta( \alpha, \beta) \quad \textrm{ for }\alpha, \beta\in \mathbb T^{n}_r(F_0),
\end{equation}
where $\beta! = \beta_1!\beta_2!\ldots \beta_n!$ and $\delta(\alpha,\beta)$ is the Kronecker delta function.

A basis of $\mathbb P_k(D(\texttt{v}_0, m))$ is given by $\{ \lambda_{0}^{k - |\alpha|}\lambda_{0^*}^{\alpha}, \alpha\in \mathbb N^{1:n}, |\alpha|\leq m \}$. The DoF-Function matrix
$(D_n^{\beta}(\lambda_{0}^{k - |\alpha|}\lambda_{0^*}^{\alpha})(\texttt{v}_0))_{\beta, \alpha}$ is block lower triangular where the lattice nodes are sorted by their length. Then if each block matrix on the diagonal ($|\alpha| = |\beta| = r$) is invertible, the whole matrix is invertible which is equivalent to the unisolvence.


Assume $u=\sum_{\alpha \in \mathbb N^{1:n}\atop |\alpha| \leq  m}c_{\alpha}\lambda_{0}^{k - |\alpha|}\lambda_{0^*}^{\alpha}$ with $c_{\alpha}\in
\mathbb R$ and $D^{\beta} u (\texttt{v}_0)=0$ for all $\beta \in \mathbb N^{1:n}$ satisfying $ |\beta | \leq  m$. We prove $c_{\alpha} = 0$ by induction with respect to $|\alpha|$. When $|\alpha | = 0$, as $u(\texttt{v}_0)=c_{(0,\ldots,0)}$ and $u(\texttt{v}_0)=0$, we conclude $c_{(0,\ldots,0)}=0$. Assume $c_{\alpha}=0$ for all $\alpha \in \mathbb N^{1:n}$ satisfying $|\alpha|\leq r-1$, i.e., $u=\sum_{\alpha \in \mathbb N^{1:n}\atop r\leq|\alpha| \leq  m}c_{\alpha}\lambda_{0}^{k - |\alpha|}\lambda_{0^*}^{\alpha}$. By Lemma~\ref{lm:derivative}, the derivative $D^{\beta}(\lambda_{0}^{k - |\alpha|}\lambda_{0^*}^{\alpha})$ vanishes at $\texttt{v}_0$ for all $\beta\in\mathbb N^{1:n}$ satisfying $|\beta|<|\alpha|$. Hence, for $|\beta| = r$, using \eqref{eq:Dn},
$$
D_n^{\beta}u(\texttt{v}_0)=D_n^{\beta}\left(\sum_{\alpha \in \mathbb N^{1:n}, |\alpha|=r}c_{\alpha}\lambda_{0}^{k - |\alpha|}\lambda_{0^*}^{\alpha}\right)(\texttt{v}_0)=\beta!  c_{\beta} = 0,
$$
which implies $c_{\beta}=0$ for all $\beta \in \mathbb N^{1:n}$, $|\beta | = r$.
Induction for $r=1,2, \ldots, m$ to conclude $u = 0$.

\end{proof}


\subsection{A decomposition of the simplicial lattice}
When $k \geq 2m + 1$, then $D(\texttt{v}, m)$ for $\texttt{v}\in \Delta_0(T)$ are disjoint. Denoted by
$$
D( \Delta_0(T), m) =  \Oplus_{\texttt{v}\in \Delta_0(T)}D(\texttt{v}, m).
$$

\begin{lemma}It holds that
 \begin{equation}\label{eq:Hermdec}
\mathbb T^{n}_k(T) = D( \Delta_0(T), m) \Oplus_{\ell = 1}^n \Oplus_{f\in \Delta_{\ell}(T)}\left [\mathbb T^{\ell}_{k,1}(f) \backslash D( \Delta_0(f), m)\right ].
\end{equation}
Consequently
\begin{align}\label{eq:HermPkdec}
\mathbb P_{k}(T) = &\mathbb P_k(D( \Delta_0(T), m))  \Oplus_{\ell = 1}^n \Oplus_{f\in \Delta_{\ell}(T)}\mathbb P_k(\mathbb T^{\ell}_{k,1}(f) \backslash D( \Delta_0(f), m)).
\end{align}
\end{lemma}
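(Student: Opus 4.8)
The plan is to prove the lattice identity \eqref{eq:Hermdec} first and then read off the polynomial decomposition \eqref{eq:HermPkdec} by applying the one-to-one correspondence $\alpha \leftrightarrow \lambda^{\alpha}$. For \eqref{eq:Hermdec} I would begin exactly as in the Lagrange case: start from the decomposition \eqref{eq:latticedec}, namely $\mathbb T^{n}_k(T) = \Oplus_{\texttt{v}\in \Delta_0(T)} \mathbb T^{0}_k(\texttt{v}) \oplus \Oplus_{\ell=1}^n \Oplus_{f\in\Delta_{\ell}(T)} \mathbb T^{\ell}_k(\stackrel{\circ}{f})$, recalling $\mathbb T^{\ell}_k(\stackrel{\circ}{f}) = \mathbb T^{\ell}_{k,1}(f)$. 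The task is then to regroup the terms so that, for each $f\in\Delta_{\ell}(T)$ with $\ell\ge 1$, the "vertex tubes" $D(\texttt{v}, m)$ for $\texttt{v}\in\Delta_0(f)$ are peeled off from $\mathbb T^{\ell}_{k,1}(f)$ and reassigned to the vertices of $T$. So the key claim to establish is the refined per-simplex decomposition
$$
\mathbb T^{\ell}_k(\stackrel{\circ}{f}) = \Oplus_{\texttt{v}\in\Delta_0(f)} \big(D(\texttt{v}, m)\cap \mathbb T^{\ell}_{k,1}(f)\big) \,\oplus\, \big(\mathbb T^{\ell}_{k,1}(f)\backslash D(\Delta_0(f), m)\big),
$$
together with the identification $D(\texttt{v}, m)\cap \mathbb T^{\ell}_{k,1}(f) = D(\texttt{v}, m)$ when $\texttt{v}$ is a vertex — more precisely, when restricted to the sub-lattice on $f$, the interior condition and the tube condition are compatible because $k\ge 2m+1$ forces the nodes in $D(\texttt{v},m)$ living on $f$ to lie in $\stackrel{\circ}{f}$ in the remaining coordinates. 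Summing over all $f$ of all dimensions $\ell\ge 1$ and collecting the vertex pieces, the $D(\texttt{v},m)$ contributions from all simplices incident to a given vertex $\texttt{v}\in\Delta_0(T)$ reassemble — using the vertex-tube structure $D(\texttt{v},m)\cong\mathbb T^n_m$ — into the full tube $D(\texttt{v},m)$ in $\mathbb T^n_k(T)$; here I would invoke the hypothesis $k\ge 2m+1$ again to guarantee the $D(\texttt{v},m)$ for distinct $\texttt{v}\in\Delta_0(T)$ are pairwise disjoint, so the union is direct, i.e., equals $D(\Delta_0(T),m)$.

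The disjointness bookkeeping is where I expect the only real subtlety. One must check three things: (i) within a fixed $f$, the tubes $D(\texttt{v},m)$ for $\texttt{v}\in\Delta_0(f)$ are mutually disjoint (immediate from $k\ge 2(\ell+1)m/\ell$... no — simply from $k\ge 2m+1$, since two such tubes meeting would force a lattice node on the edge between the two vertices with both opposite-coordinate sums $\le m$, impossible when $k>2m$); (ii) the leftover set $\mathbb T^{\ell}_{k,1}(f)\backslash D(\Delta_0(f),m)$ is nonempty and correctly accounted for — purely a matter of definition; (iii) across different $f$, the reassigned vertex pieces do not collide, which follows because $D(\texttt{v},m)$ as a subset of $\mathbb T^n_k(T)$ depends only on $\texttt{v}$, not on the intermediate $f$ through which it was routed, and these are disjoint for distinct $\texttt{v}$. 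A clean way to avoid arguing collisions directly is the cardinality trick already used for \eqref{eq:latticedec}: show the right-hand side of \eqref{eq:Hermdec} is a union of sets that are pairwise disjoint by construction (the $D(\texttt{v},m)$ are disjoint from each other and, via Lemma~\ref{lm:dist} applied on each $f$, disjoint from every leftover piece of a simplex containing that vertex; leftover pieces of distinct $f$ are disjoint since $\mathbb T^{\ell}_k(\stackrel{\circ}{f})$ are disjoint), and then verify the cardinalities sum to $|\mathbb T^n_k(T)|$ using $|D(\texttt{v},m)| = \binom{n+m}{m}$ and the isomorphism $\mathbb T^{\ell}_{k,1}(f)\cong\mathbb T^{\ell}_{k-(\ell+1)}$.

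Finally, \eqref{eq:HermPkdec} follows immediately: apply $\mathbb P_k(\cdot) = \spa\{\lambda^{\alpha} : \alpha\in\cdot\,\}$ to both sides of \eqref{eq:Hermdec}. Since a disjoint union of lattice node sets maps under the Bernstein correspondence to a direct sum of the spans (the Bernstein monomials $\lambda^{\alpha}$ being linearly independent), the $\Oplus$ on the lattice side becomes the direct sum $\Oplus$ on the polynomial side, and $\mathbb P_k(D(\Delta_0(T),m))$, $\mathbb P_k(\mathbb T^{\ell}_{k,1}(f)\backslash D(\Delta_0(f),m))$ are exactly the summands claimed. The main obstacle, to restate it, is purely combinatorial: making the regrouping of vertex tubes across all incident subsimplices airtight, for which the cardinality-counting shortcut — matching $\dim\mathbb P_k(T)$ on the nose — is the safest route and mirrors the proof of \eqref{eq:latticedec}.
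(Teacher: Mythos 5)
Your proposal is correct and follows essentially the same route as the paper: both start from the Lagrange lattice decomposition \eqref{eq:latticedec} and regroup the vertex tubes $D(\texttt{v},m)$, using $k\geq 2m+1$ for their pairwise disjointness and the fact that $D(\texttt{v},m)$ meets $\mathbb T^{\ell}_{k,1}(f)$ only when $\texttt{v}\in\Delta_0(f)$ (since $\alpha_{\texttt{v}}=0$, hence $\dist(\alpha,\texttt{v})=k$, for nodes supported on an $f$ not containing $\texttt{v}$). The paper phrases this as subtracting $D(\Delta_0(T),m)$ globally and observing $\mathbb T^{\ell}_{k,1}(f)\backslash D(\Delta_0(T),m)=\mathbb T^{\ell}_{k,1}(f)\backslash D(\Delta_0(f),m)$, whereas you peel the tubes off each $f$ and reassemble — the same argument with the bookkeeping reversed.
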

\begin{proof}
 Obviously $\mathbb T^{n}_k(T) =  D( \Delta_0(T), m) \Oplus \left [\mathbb T^{n}_k(T) \backslash D( \Delta_0(T), m) \right ]$. Then use the decomposition \eqref{eq:latticedec} for $\mathbb T^{n}_k(T)$ and the fact $\dist(\texttt{v},f) = k \geq 2m+1$ for $\texttt{v}\not\in \Delta_0(f)$ to conclude that $\mathbb T^{\ell}_{k,1}(f) \backslash D( \Delta_0(T), m) = \mathbb T^{\ell}_{k,1}(f) \backslash D( \Delta_0(f), m)$. Then the desired decomposition follows.

The decomposition of polynomial space is a consequence of Bernstein basis and  the lattice decomposition \eqref{eq:Hermdec}.
\end{proof}

\subsection{Hermite finite elements}

\begin{lemma}[Hermite element in $\mathbb R^n$]
Let $k \geq 2m + 1$ and $T$ be an $n$-dimensional simplex. The shape function space $\mathbb P_{k}(T)$ is determined by DoFs
\begin{align}\label{eq:HermDoF1}
D^{\alpha} u (\texttt{v}_i) & \quad \alpha \in \mathbb N^{1:n}, |\alpha | \leq  m, \texttt{v}_i\in \Delta_0(T), i=0,1,\ldots, n,\\
\label{eq:HermDoF2}
\int_f u \, \lambda_{f}^{\alpha_f} \,  \dd s & \quad \quad ~\alpha_f\in \mathbb T_{k-(\ell + 1)}^{\ell}, \alpha_f \leq k - m-2, f\in \Delta_{\ell}(T), \ell = 1,\ldots, n.
\end{align}
\end{lemma}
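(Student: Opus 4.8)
The plan is to establish unisolvence by the standard dimension-count-plus-triangularity argument, exactly as in the Lagrange case, but now tracking both the vertex DoFs \eqref{eq:HermDoF1} and the face DoFs \eqref{eq:HermDoF2} against the geometric decomposition \eqref{eq:HermPkdec}. First I would check that the number of DoFs equals $\dim \mathbb P_k(T)$: the vertex DoFs \eqref{eq:HermDoF1} are in bijection with the nodes of $D(\Delta_0(T),m)$ via $\alpha \mapsto \beta$ (as in Lemma~\ref{lem:vertexunisolvence}, using $k\geq 2m+1$ so the $D(\texttt{v}_i,m)$ are disjoint), and the face DoFs \eqref{eq:HermDoF2} are indexed by $\mathbb T^{\ell}_{k-(\ell+1)}(f)$ with the extra constraint $\alpha_f \leq k-m-2$, which under the shift isomorphism $\mathbb T^{\ell}_{k-(\ell+1)}(f)\cong \mathbb T^{\ell}_{k,1}(f)$ corresponds precisely to removing the nodes lying in $D(\Delta_0(f),m)$, i.e. to $\mathbb T^{\ell}_{k,1}(f)\backslash D(\Delta_0(f),m)$. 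Hence the DoF count matches the right-hand side of \eqref{eq:HermPkdec} block by block.

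Next I would choose a basis of $\mathbb P_k(T)$ adapted to \eqref{eq:HermPkdec} — Bernstein monomials $\lambda^{\alpha}$ for $\alpha$ in each block — and argue the DoF-to-basis matrix is block lower triangular when the blocks are ordered by increasing dimension of the sub-simplex (and, within the vertex block, by $|\beta|$ as in Lemma~\ref{lem:vertexunisolvence}). The two key vanishing facts are: (i) for $f\in\Delta_{\ell}(T)$ with $\ell\geq 1$ and $\alpha$ in the $f$-block, the bubble factor $\lambda_f$ forces $\lambda^{\alpha}|_e = 0$ for every $e\in\Delta_m(T)$ with $m\leq \ell$, $e\neq f$ by Lemma~\ref{lm:bf}, so its face DoFs against lower- or equal-dimensional faces vanish; and (ii) for such $\alpha$, $\dist(\alpha,\texttt{v}_i)=|\alpha_{i^*}|\geq m+1$ for every vertex $\texttt{v}_i$ — this is exactly the constraint $\alpha_f\leq k-m-2$ translated through $|\alpha_{i^*}| = k - |\alpha_i| \ge k-(|\alpha_f|-(\ell+1)) - \ldots$; more directly, $\alpha\notin D(\Delta_0(T),m)$ — so by Lemma~\ref{lm:derivative} the vertex DoFs $D^{\beta}u(\texttt{v}_i)$ with $|\beta|\leq m < \dist(\alpha,\texttt{v}_i)$ vanish on $\lambda^{\alpha}$. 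Thus the vertex block sits strictly above the face blocks, and the face blocks are triangular among themselves.

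Then I would verify invertibility of each diagonal block. The vertex diagonal block is invertible by Lemma~\ref{lem:vertexunisolvence}. For a face $f\in\Delta_{\ell}(T)$, the diagonal block is the pairing $\int_f (\,\cdot\,)\lambda_f^{\alpha_f}\dd s$ on the space $\mathbb P_k(\mathbb T^{\ell}_{k,1}(f)\backslash D(\Delta_0(f),m))$; restricting a shape function in this block to $f$ kills everything supported off $f$ (again Lemma~\ref{lm:bf}), and on $f$ itself the relevant functions are $b_f\lambda_f^{\alpha_f}$ with $b_f>0$ on $\stackrel{\circ}{f}$, so the block is a Gram-type matrix of linearly independent polynomials against a spanning set of test functions of matching dimension, hence invertible — this is the same mechanism as in the Lagrange unisolvence lemma, just restricted to the sub-lattice complementary to the vertex tubes. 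Putting the triangular structure together with the invertible diagonal blocks gives unisolvence.

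The main obstacle I anticipate is bookkeeping rather than a deep difficulty: one must carefully confirm that the constraint $\alpha_f \leq k-m-2$ in \eqref{eq:HermDoF2} is the exact combinatorial shadow of "$\alpha\notin D(\Delta_0(f),m)$ under the interior-lattice shift," so that the $f$-block of DoFs and the $f$-block of \eqref{eq:HermPkdec} have equal size and the off-diagonal vanishing (ii) holds for every node in the block and every vertex of $T$ (not merely vertices of $f$) — this uses $\dist(\texttt{v},f)=k\geq 2m+1$ for $\texttt{v}\notin\Delta_0(f)$, already recorded in the proof of \eqref{eq:Hermdec}. Once that identification is pinned down, the rest is a direct transcription of the Lagrange and vertex-unisolvence arguments.
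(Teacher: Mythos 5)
Your proposal is correct and follows essentially the same route as the paper: it rests on the decomposition \eqref{eq:HermPkdec}, establishes the block lower triangular structure via Lemma~\ref{lm:bf} and Lemma~\ref{lm:derivative} (the latter because every node in a face block has distance greater than $m$ to every vertex), invokes Lemma~\ref{lem:vertexunisolvence} for the vertex blocks and the Lagrange mechanism for the face blocks, and identifies the constraint $\alpha_f\leq k-m-2$ with $\alpha\notin D(\Delta_0(f),m)$ under the interior-lattice shift. The only difference is one of exposition: you spell out the bookkeeping that the paper compresses into ``the proof is essentially the same as that for the Lagrange element.''
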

\begin{proof}
The proof is straight forward in view of decomposition \eqref{eq:HermPkdec}. For a polynomial $u\in \mathbb P_k(\mathbb T^{\ell}_{k,1}(f) \backslash D( \Delta_0(f), m))$, as the distance of corresponding lattice nodes to all vertices are greater than $m$, \eqref{eq:HermDoF1} vanishes which means the DoF-Fun matrix is still block lower triangular. The proof is essentially the same as that for the Lagrange element except using Lemma \ref{lem:vertexunisolvence} for lattice nodes in $D( \Delta_0(T), m)$.

The condition $k \geq 2m + 1$ is required so that the disks $D(\texttt{v}_i, m)$ are disjointed.
The set
\begin{equation*}
\mathbb T^{\ell}_{k,1}(f) \backslash D( \Delta_0(T), m) = \{ \alpha_f\in \mathbb T^{\ell}_{k}(f), 1\leq \alpha_f\leq k - m - 1\},
\end{equation*}
which can be verified as follows: for any $\texttt{v}\in \Delta_0(f)$, $\alpha_{\texttt{v}}\geq 1$ as $ \alpha_f\in \mathbb T^{\ell}_{k,1}(f)$. The condition $\alpha_f\not\in D(\Delta_0(f), m)$
 is equivalent to $\dist(\alpha_f, \texttt{v}) =|\alpha_{\texttt{v}^*}| > m$ which implies the upper bound $\alpha_{\texttt{v}} \leq k - m - 1$. 
 
Then we set $\tilde \alpha_f = \alpha_f - 1$ to get the lattice set in DoF \eqref{eq:HermDoF2}. The degree of polynomial is reduced from $k$ to $k - (\ell + 1)$ as $b_f = \lambda_f\in \mathbb P_{\ell + 1}(f)$ is always positive in $\stackrel{\circ}{f}$.
\end{proof}

\begin{remark}\rm
The index set in \eqref{eq:HermDoF2} can be written in another form. For $\alpha_f\in \mathbb T_{k-(\ell + 1)}^{\ell}$, $\alpha_f\leq |\alpha_f| = k-(\ell + 1) \leq k-m-2$ if $\ell \geq m + 1$, i.e., there is no need to impose the constraint $\alpha_f\leq k-m-2$ when $\ell \geq m + 1$. Consider the case $\ell \leq m$. Take $\alpha\in \mathbb T^{\ell}_{k-(\ell +1)}(f)$. The bound $\alpha_{\texttt{v}}\leq k - m-2$ is equivalent to $|\alpha_{\texttt{v}^*}|\geq m - \ell +1$, i.e., $\alpha \notin D(\texttt{v}, m - \ell)$. Therefore
\begin{equation}\label{eq:HermDoF3}
\mathbb T^{\ell}_{k,1}(f) \backslash D( \Delta_0(f), m) \cong \mathbb T^{\ell}_{k- (\ell +1)}(f)\backslash D( \Delta_0(f), m - \ell).
\end{equation}
Geometrically we consider the inner simplicial lattice and subtract vertex disks with a smaller radiu. 
\end{remark}

\medskip


Given a mesh $\mathcal T_h$, the decomposition can be naturally extend to the whole mesh
\begin{equation*} 
V^{\rm H}(\mathcal T_h) = \Oplus_{\texttt{v}\in\Delta_{0}(\mathcal T_h)} \mathbb P_k(D(\texttt{v}, m)) \,\oplus\, \Oplus_{\ell = 1}^n \Oplus_{f\in \Delta_{\ell}(\mathcal T_h)}\mathbb P_k(\mathbb T^{\ell}_{k,1}(f) \backslash D( \Delta_0(f), m)).
\end{equation*}
And DoFs are single valued at each sub-simplex (symbolically change $\Delta_{\ell}(T)$ to $\Delta_{\ell}(\mathcal T_h)$).
The obtained space $V^{\rm H}(\mathcal T_h)$ is $C^0$-conforming only for $n>1$ but $C^m$ continuous at vertices.
The dimension of $V^{\rm H}(\mathcal T_h)$ is
$$
\dim V^{\rm H}(\mathcal T_h) = |\Delta_0(\mathcal T_h)| {n+m \choose m} + \sum_{\ell=1}^n|\Delta_{\ell}(\mathcal T_h)| \left [{k-1 \choose \ell} - (\ell + 1){m \choose \ell} \right ].
$$
When computing the dimension of $\mathbb P_k(\mathbb T^{\ell}_{k,1}(f) \backslash D( \Delta_0(f), m))$, it is easier to use the equivalent index set in \eqref{eq:HermDoF3}.

Compared with the Lagrange elements, more DoFs are accumulated to vertices and may reduce the dimension of the finite element space. 
For example, in two dimensions, moving edge-wise and element-wise DoFs to vertices will reduce the dimension of the finite element space around one half less, which is considered as an advantage of using Hermite elements vs Lagrange elements.

\section{Smooth Finite Elements in Two Dimensions}\label{sec:geodecomp2d}
We shall re-construct the $C^m$-conforming finite element on two-dimensional triangular grids, firstly constructed by Bramble and Zl\'amal \cite{BrambleZlamal1970},  by a decomposition of the simplicial lattice. We start from a Hermite finite element space which ensures the tangential derivatives across edges are continuous. By adding degrees of freedom on the normal derivative, we can impose the continuity of derivatives across triangles. We use two-dimensional case as an introductory example for the so-called super-smoothness at lower sub-simplexes: the smoothness at vertices is $C^{2m}$ which is sufficient but may not be necessary.

We use a pair of integers $\bs r = (r_0, r_1)$ for the smoothness at $0$-dimensional sub-simplex (vertex) and at $1$-dimensional sub-simplex (edge), respectively. To be $C^m$-conforming, $r_1 = m$ is the minimum requirement for edges and $r_0\geq 2r_1$  for vertices.

\subsection{Normal derivatives}
Given an edge $e$, we first identify lattice nodes to determine the normal derivative
$$
\left \{ \frac{\partial^{\beta} u}{\partial n_e^{\beta}} \mid _e , 0\leq \beta \leq m\right \}.
$$
By Lemma \ref{lm:derivative}, if the lattice node is $r_1+1$ away from the edge, then the corresponding Bernstein polynomial will have vanishing normal derivatives up to order $r_1$.

On the two vertices, we have used $D(\Delta_0(e), r_0)$ for the derivative at vertices.
So we will use the rest, i.e., $D(e,r_1)\backslash  D(\Delta_0(e), r_0)$ for the normal derivative.

\begin{lemma}\label{lem:edgeunisolvence2d}
Let $r_0\geq r_1 \geq 0$ and $k \geq 2r_0+1$. Let $e\in \Delta_1(T)$ be an edge of a triangle $T$.
The polynomial function space $\mathbb P_{k}( D(e, r_1)\backslash D(\Delta_0(e), r_0))$ is determined by DoFs
\begin{equation*}
\int_e \frac{\partial^{\beta} u}{\partial n_e^{\beta}}  \, \lambda_e^{\alpha_e} \dd s \quad \alpha_e \in \mathbb T^{1}_{k - 2(r_0+1) + \beta}, \beta = 0,1,\ldots, r_1.
\end{equation*}
\end{lemma}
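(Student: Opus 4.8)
The plan is to mimic the unisolvence arguments already used for the Lagrange and Hermite elements (Lemma~\ref{lm:bf} and Lemma~\ref{lem:vertexunisolvence}): first check that the dimension of $\mathbb P_{k}(D(e,r_1)\backslash D(\Delta_0(e),r_0))$ equals the number of listed DoFs, then show that if all DoFs vanish on some $u$ in this space, then $u=0$. For the dimension count, one identifies the lattice set $D(e,r_1)\backslash D(\Delta_0(e),r_0)$ with the union over $\beta=0,\dots,r_1$ of the ``slabs'' $L(e,\beta)$ minus the two vertex disks $D(\texttt{v}_i,r_0)$; on each slab the remaining nodes are in bijection with $\mathbb T^{1}_{k-2(r_0+1)+\beta}$, which is precisely the index set attached to the $\beta$-th normal derivative DoF. (Here $k\ge 2r_0+1$ guarantees $D(\texttt{v}_0,r_0)$ and $D(\texttt{v}_1,r_0)$ are disjoint and that the subtraction leaves a nonnegative count on each slab.)

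For the vanishing argument, I would set up coordinates on $T$ so that $\lambda_e=(\lambda_{e(0)},\lambda_{e(1)})$ are the barycentric coordinates along $e$ and $\lambda_{e^*}$ is the single barycentric coordinate in the direction normal to $e$; write a basis of $\mathbb P_{k}(D(e,r_1)\backslash D(\Delta_0(e),r_0))$ as $\lambda_e^{\alpha_e}\lambda_{e^*}^{\,j}$ with $0\le j\le r_1$ and $\alpha_e$ ranging over the appropriate punctured set. The key structural fact is that the DoF--function matrix is block lower triangular when the basis functions and the DoFs are both sorted by the distance index $j$ (equivalently $|\alpha_{e^*}|$). Indeed, by Lemma~\ref{lm:derivative}, $\partial_{n_e}^{\beta}(\lambda_e^{\alpha_e}\lambda_{e^*}^{\,j})|_e=0$ whenever $\beta<j$, so applying the $\beta$-th DoF to a basis function with $j>\beta$ gives zero; this yields the block triangular structure with blocks indexed by $j=\beta$. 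On the diagonal block $j=\beta$, the computation $\partial_{n_e}^{\beta}(\lambda_e^{\alpha_e}\lambda_{e^*}^{\,\beta})|_e = c_\beta\,\lambda_e^{\alpha_e}$ with a nonzero constant $c_\beta$ (a factorial-type factor, using that $\lambda_{e^*}$ depends affinely on the normal variable) reduces the diagonal block to the Lagrange mass matrix on $e$ against $\{\lambda_e^{\alpha_e}\}$, which is invertible since the $\lambda_e^{\alpha_e}$ are linearly independent on $e$. Hence the whole matrix is invertible, giving unisolvence.

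The main obstacle I anticipate is bookkeeping rather than conceptual: one must verify carefully that, after removing the vertex disks $D(\texttt{v}_0,r_0)$ and $D(\texttt{v}_1,r_0)$, the surviving nodes on slab $L(e,\beta)$ really do biject with $\mathbb T^{1}_{k-2(r_0+1)+\beta}$, i.e.\ that the shift $\alpha_e\mapsto\alpha_e-(r_0+1-\beta)$ (subtracting the forbidden range near each of the two endpoints) lands exactly in the claimed lattice and uses the hypothesis $k\ge 2r_0+1$ to keep the degree nonnegative. One also needs to confirm that the diagonal-block constant $c_\beta$ is genuinely nonzero for each $\beta\le r_1\le r_0$, which follows because differentiating $\lambda_{e^*}^{\,\beta}$ exactly $\beta$ times in the normal direction yields $\beta!(\partial_{n_e}\lambda_{e^*})^{\beta}$ with $\partial_{n_e}\lambda_{e^*}\neq0$; any lower-order normal derivative of the factor $\lambda_{e^*}^{\,\beta}$ still vanishes on $e$, so no cross terms survive. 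Once these two points are nailed down, the argument is the same block-triangular unisolvence proof already deployed twice in the paper.
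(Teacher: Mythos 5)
Your proposal is correct and follows essentially the same route as the paper: slice $D(e,r_1)\backslash D(\Delta_0(e),r_0)$ into the slabs $L(e,\beta)$, identify each punctured slab with $\mathbb T^{1}_{k-2(r_0+1)+\beta}$ via the shift by $r_0-\beta+1$ at each endpoint, invoke Lemma~\ref{lm:derivative} for the block lower triangular structure, and compute the diagonal block as $\beta!(n_e\cdot\nabla\lambda_{e^*})^{\beta}$ times a weighted mass matrix on $e$. The only cosmetic difference is that the diagonal block carries the positive weight $\lambda_e^{r_0-\beta+1}$ from the shift rather than being the plain mass matrix, which does not affect its invertibility.
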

\begin{proof}
Without loss of generality, we take $e = e_{0,1}$. By definition $ D(e,r_1) = \Oplus_{i=0}^{r_1} L(e,i)$, where recall that
$$
L(e,i) = \{ \alpha \in \mathbb T^{2}_k, \dist(\alpha,e) = i\} = \{ \alpha \in \mathbb T^{2}_k, \alpha_2 = i\} = \{ \alpha \in \mathbb T^{2}_k, \alpha_0+\alpha_1 = k - i\}
$$ 
consists of lattice nodes parallel to $e$ and with distance $i$. Then $L(e,i) \cong \mathbb T^{1}_{k-i}(e)$ by keeping $(\alpha_0, \alpha_1)$ only.

Now we use the requirement $\alpha \notin D(\Delta_0(e), r_0)$ to figure out the range of the nodes. Using Lemma \ref{lm:dist}, we derive from $\dist (\alpha, \texttt{v}_0) > r_0$ that $ \alpha_0 < k - r_0$. Together with $\alpha_0+\alpha_1 = k - i$, we get the lower bound $\alpha_1\geq r_0 - i+1$. Similarly $\alpha_0\geq r_0 - i+1$.
Therefore the line segment
$$
L(e,i) \backslash D(\Delta_0(e), r_0) =\{ (\alpha_0, \alpha_1, i), \alpha_0+\alpha_1 = k - i, \min\{\alpha_0, \alpha_1\} \geq  r_0 - i+1\},
$$
which can be identified with the lattice $\mathbb T^{1}_{k - 2(r_0+1) + i}$ without inequality constraint.

Applying the same argument in the proof of Lemma~\ref{lem:vertexunisolvence},
it follows from Lemma~\ref{lm:derivative} that matrix 
$(\frac{\partial^{\beta}}{\partial n_e^{\beta}}(\lambda_{e}^{\alpha_e}\lambda_{2}^{i})|_e)_{\beta, i}$ is lower triangular. 
Hence it suffices to prove the polynomial function space $\mathbb P_{k}(L(e,i) \backslash D(\Delta_0(e), r_0))$ is determined by DoFs
\begin{equation}\label{eq:normalDoF2Di}
\int_e \frac{\partial^{i} u}{\partial n_e^{i}}  \, \lambda_e^{\alpha_e} \dd s \quad \alpha_e \in \mathbb T^{1}_{k - 2(r_0+1) + i}.
\end{equation}
Take a $u= \sum_{\alpha_e \in \mathbb T^{1}_{k - 2(r_0+1) + i}}c_{\alpha_e}\lambda_e^{\alpha_e}\lambda_e^{r_0 - i+1}\lambda_2^{i}  \in \mathbb P_{k}(L(e,i) \backslash D(\Delta_0(e), r_0))
$ with coefficients $c_{\alpha_e}\in\mathbb R$. Then
$$
\frac{\partial^{i} u}{\partial n_e^{i}}|_e=i!(n_e\cdot\nabla\lambda_2)^i\lambda_e^{r_0 - i+1}\sum_{\alpha_e \in \mathbb T^{1}_{k - 2(r_0+1) + i}}c_{\alpha_e}\lambda_e^{\alpha_e}.
$$
Noting that $n_e\cdot\nabla\lambda_2$ is a constant and $\lambda_e$ is always positive in the interior of $e$, the vanishing DoF \eqref{eq:normalDoF2Di} means $c_{\alpha_e} = 0$ for all $\alpha_e\in \mathbb T^{1}_{k - 2(r_0+1) + i}$.
\end{proof}

Geometrically we push all lattice nodes in $D(e,r_1)\backslash D(\Delta_0(e), r_0)$ to the edge to determine normal derivatives on $e$ up to order $r_1$.

\subsection{The geometric decomposition}
The requirement $r_1\leq r_0$ in Lemma \ref{lem:edgeunisolvence2d} is due to the fact that the smoothness of the normal derivative is less than or equal to that of the vertices. In a triangle, a vertex will be shared by two edges and to have enough lattice nodes for each edge,  $r_0\geq 2r_1$ is required.


\begin{figure}[htbp]
\subfigure[The geometric decomposition of a Hermite element.]{
\begin{minipage}[t]{0.5\linewidth}
\centering
\includegraphics*[width=4.75cm]{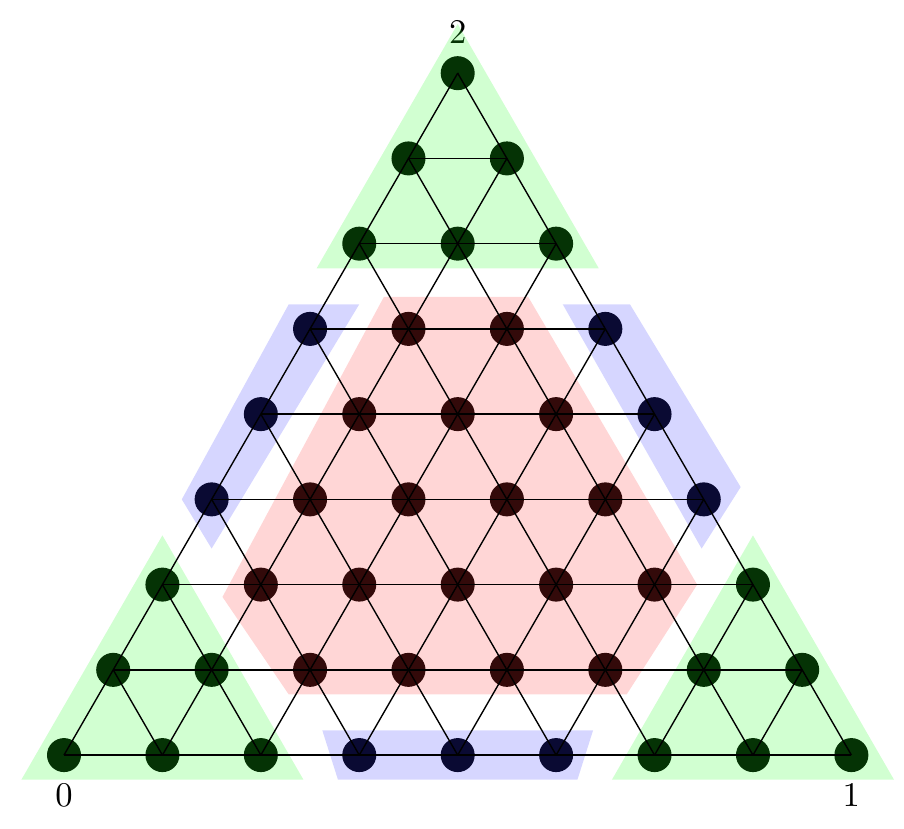}
\end{minipage}}
\subfigure[The geometric decomposition of a $C^1$ element: $m=1, r_1 = 1, r_0 = 2, k = 8, n = 2$.]
{\begin{minipage}[t]{0.5\linewidth}
\centering
\includegraphics*[width=4.75cm]{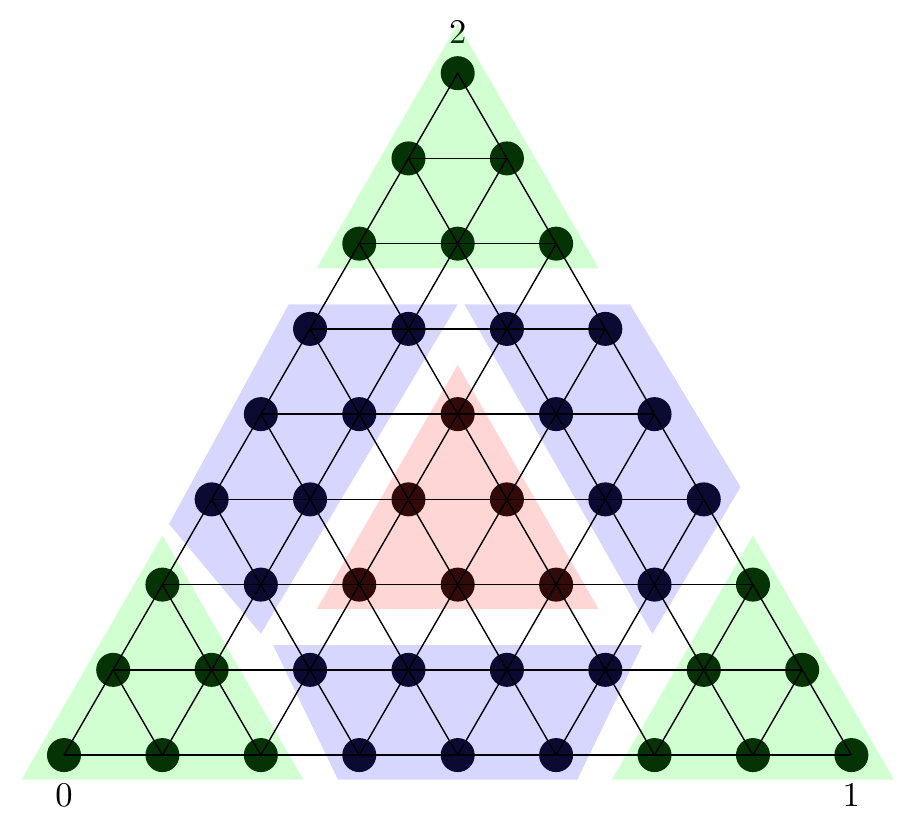}
\end{minipage}}
\caption{Comparison of the geometric decompositions of a two-dimensional Hermite element and a $C^1$-conforming element.}
\label{fig:2Ddec}
\end{figure}

\begin{lemma}\label{lem:geodecomp2d}
Let $r_1 = m$, $r_0\geq 2r_1$, and $k\geq 2r_0+1\geq 4m+1$. Let $T$ be a triangle. Then it holds that
\begin{align}\label{eq:smoothdec2d}
  \mathbb T^{n}_k(T) = S_0(T) \Oplus S_1(T) \Oplus S_2(T),
\end{align}
where
\begin{align*}
S_0(T) &=  D( \Delta_0(T), r_0), \\
S_1(T) &= \Oplus_{e\in \Delta_{1}(T)}\left ( D(e,r_1) \backslash S_0(T)\right ),\\
S_2(T) & = \mathbb T^{n}_k(T) \backslash (S_0(T) \oplus S_1(T)).
\end{align*}
This leads to the decomposition of the polynomial space
\begin{align}\label{eq:PkCmdec}
\mathbb P_k(T)&= \mathbb P_k(S_0(T)) \Oplus \mathbb P_k(S_1(T)) \Oplus \mathbb P_k(S_2(T)).
\end{align}
\end{lemma}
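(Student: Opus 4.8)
The plan is to establish the set-theoretic partition \eqref{eq:smoothdec2d} first, after which \eqref{eq:PkCmdec} follows mechanically by passing to Bernstein bases. Since $S_2(T)$ is \emph{defined} as the complement of $S_0(T)\cup S_1(T)$ inside $\mathbb T^n_k(T)$, proving \eqref{eq:smoothdec2d} amounts to showing that $S_0(T)$, $S_1(T)$, $S_2(T)$ are pairwise disjoint, which in turn reduces to three elementary claims: (i) the vertex tubes $\{D(\texttt{v},r_0)\}_{\texttt{v}\in\Delta_0(T)}$ are mutually disjoint, so that $S_0(T)=D(\Delta_0(T),r_0)$ genuinely is a disjoint union; (ii) for distinct edges $e,e'\in\Delta_1(T)$ the trimmed tubes $D(e,r_1)\setminus S_0(T)$ and $D(e',r_1)\setminus S_0(T)$ are disjoint, so that $S_1(T)$ is a disjoint union; and (iii) $S_1(T)\cap S_0(T)=\varnothing$, which is immediate because each block of $S_1(T)$ has $S_0(T)$ removed.

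For (i) I would argue by contradiction: if $\alpha\in D(\texttt{v}_i,r_0)\cap D(\texttt{v}_j,r_0)$ with $i\neq j$, then by Lemma~\ref{lm:dist} both $|\alpha_{i^*}|=k-\alpha_i$ and $|\alpha_{j^*}|=k-\alpha_j$ are at most $r_0$; adding and using $\alpha_i+\alpha_j\leq|\alpha|=k$ gives $k\leq 2k-\alpha_i-\alpha_j\leq 2r_0$, contradicting $k\geq 2r_0+1$ (this is the familiar reason the vertex disks are disjoint once $k\geq 2m+1$, already used in the Hermite case). For (ii) I would use that two distinct edges of a triangle meet in exactly one vertex: writing $e=\{i,j\}$ and $e'=\{i,k\}$ with $\{i,j,k\}=\{0,1,2\}$, the opposite vertices are $e^*=\{k\}$ and $(e')^{*}=\{j\}$, so any $\alpha\in D(e,r_1)\cap D(e',r_1)$ satisfies $\alpha_k\leq r_1$ and $\alpha_j\leq r_1$ by Lemma~\ref{lm:dist}; hence $|\alpha_{i^*}|=\alpha_j+\alpha_k\leq 2r_1\leq r_0$, i.e.\ $\alpha\in D(\texttt{v}_i,r_0)\subseteq S_0(T)$. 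Thus $D(e,r_1)\cap D(e',r_1)\subseteq S_0(T)$ and the trimmed tubes are disjoint; this is exactly the step that forces the hypothesis $r_0\geq 2r_1$. It is also worth recording along the way that $D(e,r_1)\cap D(e^*,r_0)=\varnothing$ — indeed $\alpha\in D(e^*,r_0)$ forces $|\alpha_e|\leq r_0$, hence $|\alpha_{e^*}|=k-|\alpha_e|\geq k-r_0\geq r_0+1>r_1$ — so that $D(e,r_1)\setminus S_0(T)=D(e,r_1)\setminus D(\Delta_0(e),r_0)$, which matches the building block of Lemma~\ref{lem:edgeunisolvence2d}.

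With \eqref{eq:smoothdec2d} in hand, \eqref{eq:PkCmdec} is immediate: $\{\lambda^\alpha:\alpha\in\mathbb T^n_k(T)\}$ is the Bernstein basis of $\mathbb P_k(T)$, and since \eqref{eq:smoothdec2d} partitions the index set into $S_0(T),S_1(T),S_2(T)$, spanning over each block yields the direct sum $\mathbb P_k(T)=\mathbb P_k(S_0(T))\oplus\mathbb P_k(S_1(T))\oplus\mathbb P_k(S_2(T))$ with dimensions adding up. I expect the only mildly delicate point to be the bookkeeping in step (ii): confirming that in two dimensions every pairwise intersection of edge tubes is absorbed by a vertex tube precisely when $r_0\geq 2r_1$; everything else is a routine complement-and-cardinality argument.
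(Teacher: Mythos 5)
Your proposal is correct and follows essentially the same route as the paper's proof: vertex tubes are pairwise disjoint because $k\geq 2r_0+1$, the pairwise intersections of edge tubes are absorbed into the vertex tube of the shared vertex because $r_0\geq 2r_1$, and the identity $D(e,r_1)\setminus S_0(T)=D(e,r_1)\setminus D(\Delta_0(e),r_0)$ is recorded exactly as in the paper. The only difference is that you spell out the arithmetic the paper leaves implicit, which is fine.
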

\begin{proof}
As $k\geq 2r_0+1$, the sets $\{D(\texttt{v}, r_0), \texttt{v}\in \Delta_0(T)\}$ are disjoint. 
As $\dist(\texttt{v},e) = k \geq 2r_0+1 > r_1$ for $\texttt{v}\not\in \Delta_0(e)$, $D(e,r_1) \backslash D( \Delta_0(T), r_0) = D(e, r_1) \backslash D( \Delta_0(e), r_0)$. 

We then show that the sets $\{D(e,r_1) \backslash D( \Delta_0(e), r_0), e\in \Delta_1(T)\}$ are disjoints.
A node $\alpha \in D(e_{01}, r_1)$ implies $\alpha_2\leq r_1$ and $\alpha \in D(e_{02}, r_1)$ implies $\alpha_1\leq r_1$. Therefore $|\alpha_{0^*}| = \alpha_1+ \alpha_2 \leq 2r_1\leq r_0$, i.e., $(D(e_{01}, r_1)\cap D(e_{02}, r_1))\subseteq D(\texttt{v}_0, r_0)$. Repeat the argument for each pair of edges to conclude $\{D(e,r_1) \backslash D( \Delta_0(e), r_0), e\in \Delta_1(T)\}$ are disjoint. Then \eqref{eq:smoothdec2d} follows.
%
%
\end{proof}

We give another description of $S_{\ell}(T)$ to rewrite the index set. For a lattice node $\alpha \in D(e_{01},r_1) \backslash D( \Delta_0(e_{01}), r_0)$, it satisfies the constraint
$$
\alpha_2 \leq r_1, \; \alpha_0+\alpha_2 > r_0, \; \alpha_1+\alpha_2 > r_0, \;\alpha_ 0 + \alpha_1 + \alpha_2 = k.
$$
We let $\alpha_2 = i$ for $i=0,1,\ldots, r_1 = m$. Then 
%
$$
\lambda_0^{\alpha_0}\lambda_1^{\alpha_1}\lambda_2^{\alpha_2} = \lambda^{i}(\lambda_0\lambda_1)^{r_0 - 2i +1} \lambda_{0}^{\alpha_ 0 -  (r_0 + 1)+i}\lambda_{1}^{\alpha_1 - (r_0 + 1)+i} = b_T^i b_{e}^{r_0 - 2i+1}\lambda_e^{\alpha_e}
$$
with $\alpha_e\in \mathbb N^{0:1}_{k-2(r_0+1)+i}$. So we have 
\begin{equation}\label{eq:PkS1}
\mathbb P_k(S_1(T)) = \Oplus_{e\in\Delta_{1}(T)}\Oplus_{i=0}^mb_T^ib_e^{r_{0}+1-2i}\mathbb P_{k-2(r_{0}+1)+i}(e).
\end{equation}
For a node $\alpha\in S_2(T)$, it satisfies the constraint
$$
\alpha_ 0 + \alpha_1 + \alpha_2 = k,\; \alpha > r_1 = m, \; \alpha < k-r_0.
$$
Then set $\tilde \alpha = \alpha - (m+1)$. We can write
$$
\lambda^{\alpha} = \lambda^{m+1} \lambda^{\tilde \alpha}, \; |\tilde \alpha | = k - 3(m+1), \; \tilde \alpha\leq k-r_0 - m - 2.
$$
Therefore 
\begin{equation}\label{eq:PkS2}
\mathbb P_k(S_2(T)) =  b_T^{m+1}\mathbb P_{k-3(m+1)}^{0,0}(T),
\end{equation}
where
\begin{align*}
\mathbb P_{k-3(m+1)}^{0,0}(T):=\spa\big\{\lambda^{\alpha} \mid \alpha\in\mathbb T^{2}_{k-3(m+1)}, \alpha \leq k-r_0-m-2\big\}.
\end{align*}
When $r_0 = 2r_1 = 2m$ or $r_0 = 2r_1+1 = 2m+1$, $\mathbb P_{k-3(m+1)}^{0,0}(T) = \mathbb P_{k-3(m+1)}(T)$ as the constraint automatically holds.

Such simplification is not needed in implementation. Distance to a vertex or an edge is computable and a logic array can be used to represent $S_{\ell}(T)$.
\subsection{Smooth finite elements in two dimensions}
\begin{theorem}
 Let $r_1 = m$, $r_0\geq 2r_1$, and $k\geq 2r_0+1\geq 4m+1$. Let $T$ be a triangle. The shape function space $\mathbb P_{k}(T)$ is determined by the DoFs
\begin{align}
\label{eq:C12d0}
D^{\alpha} u (\texttt{v}) & \quad \alpha \in \mathbb N^{1:2}, |\alpha | \leq  r_0, \texttt{v}\in \Delta_0(T),\\
\label{eq:C12d1}
\int_e \frac{\partial^{\beta} u}{\partial n_e^{\beta}}  \, \lambda_e^{\alpha} \dd s & \quad \alpha \in \mathbb T^{1}_{k - 2(r_0+1) + \beta}, \beta = 0,1,\ldots, r_1,\\
\label{eq:C12d2}
\int_T u \lambda^{\alpha} \dx & \quad \alpha \in \mathbb T^{2}_{k-3(m+1)}, \alpha \leq k - r_0 - m - 2.
\end{align}
\end{theorem}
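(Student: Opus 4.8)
The plan is to deduce the theorem from the geometric decomposition of the simplicial lattice already established in Lemma~\ref{lem:geodecomp2d}, exactly as the Lagrange and Hermite cases were deduced from their lattice decompositions. By \eqref{eq:smoothdec2d} and \eqref{eq:PkCmdec}, a basis of $\mathbb P_k(T)$ is obtained by collecting the Bernstein monomials $\lambda^\alpha$ with $\alpha$ ranging over $S_0(T)$, $S_1(T)$, and $S_2(T)$ separately. The count of basis functions in $\mathbb P_k(S_0(T))$ is $3\binom{2+r_0}{r_0}$, which matches the number of vertex DoFs \eqref{eq:C12d0}; the count in $\mathbb P_k(S_1(T))$ matches the number of edge DoFs \eqref{eq:C12d1} after reindexing via \eqref{eq:PkS1} (each $L(e,\beta)\setminus D(\Delta_0(e),r_0)\cong \mathbb T^1_{k-2(r_0+1)+\beta}$); and the count in $\mathbb P_k(S_2(T))$ matches the number of interior DoFs \eqref{eq:C12d2} after the shift $\tilde\alpha=\alpha-(m+1)$ described around \eqref{eq:PkS2}. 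So the dimension of $\mathbb P_k(T)$ equals the total number of DoFs, and unisolvence reduces to showing that the square DoF--function matrix is nonsingular.

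The key structural step is to order the three blocks as $S_0, S_1, S_2$ and show the matrix is block lower triangular with invertible diagonal blocks. First I would verify the off-diagonal vanishing. For the $(S_1\text{-DoF}, S_0\text{-basis})$ and $(S_2\text{-DoF}, S_0\text{-basis})$ blocks there is nothing extra to do in the chosen ordering since those DoFs do not act on $S_0$ basis functions in the triangular sense; rather the content is that the $(S_0\text{-DoF}, S_1\cup S_2\text{-basis})$ block vanishes: a basis function $\lambda^\alpha$ with $\alpha\in S_1(T)\cup S_2(T)$ satisfies $\dist(\alpha,\texttt{v})>r_0$ for every vertex $\texttt{v}$ (by construction of $S_1,S_2$ as the complement of $D(\Delta_0(T),r_0)$), so by Lemma~\ref{lm:derivative} all derivatives $D^\beta\lambda^\alpha(\texttt{v})$ with $|\beta|\le r_0$ vanish, killing the vertex DoFs \eqref{eq:C12d0}. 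Similarly, for $\alpha\in S_2(T)$ one has $\dist(\alpha,e)>r_1$ for every edge $e$, so by Lemma~\ref{lm:derivative} the normal-derivative traces $\partial_{n_e}^\beta\lambda^\alpha|_e$ vanish for $\beta\le r_1$, killing the edge DoFs \eqref{eq:C12d1} evaluated on $S_2$ basis functions. This yields the block lower triangular structure.

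For the diagonal blocks: the $S_0$-block is invertible by Lemma~\ref{lem:vertexunisolvence} (applied at each of the three vertices, using that the $D(\texttt{v},r_0)$ are disjoint because $k\ge 2r_0+1$); the $S_1$-block is invertible by Lemma~\ref{lem:edgeunisolvence2d} (applied at each of the three edges, using that the sets $D(e,r_1)\setminus D(\Delta_0(e),r_0)$ are pairwise disjoint, as shown in Lemma~\ref{lem:geodecomp2d}); and the $S_2$-block is invertible because, by \eqref{eq:PkS2}, every function there is $b_T^{m+1}$ times a polynomial in $\mathbb P^{0,0}_{k-3(m+1)}(T)$, and $b_T=\lambda_0\lambda_1\lambda_2>0$ in $\stackrel{\circ}{T}$, so the map $p\mapsto \int_T b_T^{m+1}p\,q\dx$ over the finite-dimensional span is a Gram-type pairing that is nondegenerate — equivalently the interior DoFs are dual to the interior Bernstein basis up to an invertible transformation. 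Hence the full matrix is invertible and the element is unisolvent.

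The main obstacle I expect is bookkeeping rather than a genuine difficulty: one must be careful that within the $S_1$-block the ordering is refined further by the distance layer $\beta$ (i.e.\ by $\alpha_2=i$ for $e=e_{01}$), so that Lemma~\ref{lem:derivative} makes \emph{this} sub-block lower triangular in $\beta$ as in the proof of Lemma~\ref{lem:edgeunisolvence2d}, and that the reindexings \eqref{eq:PkS1} and \eqref{eq:PkS2} are stated with the correct polynomial degrees and inequality constraints so the DoF lists \eqref{eq:C12d1}–\eqref{eq:C12d2} match exactly. A secondary point worth a sentence is confirming that the constraints $r_1=m$, $r_0\ge 2r_1$, $k\ge 2r_0+1$ are precisely what is needed to invoke the three lemmas above (disjointness of vertex tubes, disjointness of the edge layers, and $r_0\ge r_1$ in Lemma~\ref{lem:edgeunisolvence2d}); no new hypothesis is required.
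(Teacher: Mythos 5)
Your proposal is correct and follows essentially the same route as the paper: dimension matching via the lattice decomposition of Lemma~\ref{lem:geodecomp2d} together with \eqref{eq:PkS1}--\eqref{eq:PkS2}, block lower triangularity of the DoF--function matrix from Lemma~\ref{lm:derivative}, and invertibility of the diagonal blocks via Lemma~\ref{lem:vertexunisolvence}, Lemma~\ref{lem:edgeunisolvence2d}, and the positivity of $b_T$ in $\stackrel{\circ}{T}$. The paper states this more tersely (deducing $u\in\mathbb P_k(S_2(T))$ directly from the vanishing of \eqref{eq:C12d0}--\eqref{eq:C12d1}), but the underlying argument is the same.
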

\begin{proof}
By the decomposition \eqref{eq:PkCmdec} of $\mathbb P_k(T)$ and characterization \eqref{eq:PkS1}-\eqref{eq:PkS2}, the dimension of $\mathbb P_k(T)$ matches the number of DoFs. Let $u\in\mathbb P_{k}(T)$ satisfying all the DoFs \eqref{eq:C12d0}-\eqref{eq:C12d2} vanish.
Thanks to Lemma~\ref{lem:vertexunisolvence}, Lemma~\ref{lem:edgeunisolvence2d} and Lemma~\ref{lem:geodecomp2d}, it follows from the vanishing DoFs \eqref{eq:C12d0} and \eqref{eq:C12d1} that $u\in \mathbb P_{k}(S_2(T))$.
As $b_T$ is always positive in the interior of $T$, $u=0$ holds from the vanishing DoF \eqref{eq:C12d2}.
\end{proof}

When $r_1=m = 1$ and $r_0=2$, this is known as Argyris element \cite{ArgyrisFriedScharpf1968,MorganScott1975}. 
When $r_1=m$, $r_0=2m$ and $k = 4m+1$, $C^m$-continuous finite elements are constructed in \cite{BrambleZlamal1970,Zenisek1970}, whose interior DoFs are different from \eqref{eq:C12d2}. In our notation, the edge and interior DoFs in \cite{huConstructionConformingFinite2021} are adopted as
\begin{align*}
\int_e \frac{\partial^{\beta} u}{\partial n_e^{\beta}}  \, \lambda_e^{\alpha} \dd s & \quad \alpha \in \mathbb T^{1}_{k -\beta}, \alpha\geq r_0-\beta+1, \beta = 0,1,\ldots, r_1,\\
\int_T u \lambda^{\alpha} \dx & \quad \alpha \in S_2(T),
\end{align*}
which are slightly different from \eqref{eq:C12d1}-\eqref{eq:C12d2}. We further remove the edge and element bubbles in the test function space in DoFs.

With mesh $\mathcal T_h$, define the global $C^m$-continuous finite element space 
\begin{align*} 
V(\mathcal T_h) &= \{u\in C^m(\Omega): u|_T\in\mathbb P_k(T)\textrm{ for all } T\in\mathcal T_h, \\
&\qquad\textrm{ and all the DoFs \eqref{eq:C12d0} and \eqref{eq:C12d1} are single-valued}\}. 
\end{align*}
Then $V(\mathcal T_h)$ admits the following geometric decomposition 
\begin{align*} 
V(\mathcal T_h) &= \Oplus_{\texttt{v}\in \Delta_{0}(\mathcal T_h)} \mathbb P_k(D(\texttt{v}, r_0)) \,\oplus\, \Oplus_{e\in \Delta_{1}(\mathcal T_h)}\mathbb P_k\left ( D(e,r_1) \backslash S_0(e)\right ) \\
&\quad\,\oplus\,  \Oplus_{T\in \mathcal T_h}\mathbb P_k(S_2(T)), 
\end{align*}
where $S_0(e) =  D( \Delta_0(e), r_0)$.
The dimension of $V(\mathcal T_h)$ is
\begin{align*}
\dim V(\mathcal T_h) &= |\Delta_0(\mathcal T_h)| {r_0+2 \choose 2} + |\Delta_{1}(\mathcal T_h)|(m+1) \left(k-2r_0-1+m/2\right) \\
&\quad+|\Delta_2(\mathcal T_h)|\left[{k-3m-1 \choose 2}-3{r_0-2m \choose 2}\right].
\end{align*}
In particular, for the minimum degree case: $r_1=m, r_0= 2m, k = 4m+1$, we denoted by $V^{\rm BZ}(\mathcal T_h)$ and the dimension
$$
\dim V^{\rm BZ}(\mathcal T_h) = |\Delta_0(\mathcal T_h)| {2m + 2 \choose 2} + |\Delta_{1}(\mathcal T_h)| {m+1 \choose 2} + |\Delta_2(\mathcal T_h)| {m \choose m - 2}.
$$
When $m\leq 1$, there is no interior moments as $k = 4m+1$ is small.

\section{Smooth Finite Elements in Arbitrary Dimension}\label{sec:geodecompnd}
In this section we shall generalize the construction to arbitrary dimension. The smoothness at sub-simplexes is exponentially increasing as the dimension decreases
$$
r_{n}=0,\;\; r_{n-1}=m,\;\; r_{\ell}\geq 2r_{\ell+1} \; \textrm{ for } \ell=n-2,\ldots, 0.
$$
And the degree of polynomial $k\geq 2r_0+1 \geq 2^n m + 1$. The key in the construction is a non-overlapping decomposition of the simplicial lattice in which each component will be used to determine the normal derivatives. 

When $n=3, m = 1, r_1 = 2, r_0 = 4$ and $k \geq 9$, it is the $C^1$ element on tetrahedron constructed by Zhang in \cite{Zhang2009a}.
When $n=4, m = 1, r_2 = 2, r_1 = 4, r_0 = 8$ and $k \geq 17$, it is the $C^1$ element on simplex in four dimensions constructed by Zhang in 
\cite{Zhang2016a}. Neilan's Stokes element~\cite{Neilan2015} is a $C^0$ element with parameters $n = 3, m=r_2 =0, r_1 = 1, r_0 = 2,$ and $k\geq 5$.

\subsection{A decomposition of the simplicial lattice}
We explain the requirement $r_{\ell-1}\geq 2r_{\ell}$. 
\begin{lemma}\label{lm:disjoint}
Let $T$ be an $n$-dimensional simplex. For $\ell = 1,\ldots, n-1$, if $r_{\ell-1}\geq 2r_{\ell}$, the sub-sets $\{ D(f, r_{\ell}) \backslash \left [ \cup_{e\in \Delta_{\ell - 1}(f)} D(e, r_{\ell - 1})\right ], f\in \Delta_{\ell} (T)\}$ are disjoint.
\end{lemma}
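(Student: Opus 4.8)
The goal is to show that for $\ell=1,\dots,n-1$ and $r_{\ell-1}\ge 2r_\ell$, the sets
\[
 S_\ell(f) := D(f, r_\ell) \setminus \Big[ \bigcup_{e\in\Delta_{\ell-1}(f)} D(e, r_{\ell-1}) \Big], \qquad f\in\Delta_\ell(T),
\]
are pairwise disjoint. The natural approach is the one already used for the two-dimensional case in Lemma~\ref{lem:geodecomp2d}: take two distinct $\ell$-simplices $f_1, f_2 \in \Delta_\ell(T)$ and show that any lattice node in $D(f_1,r_\ell)\cap D(f_2,r_\ell)$ already lies in $D(e,r_{\ell-1})$ for some common facet $e$, so it is removed from at least one of $S_\ell(f_1)$, $S_\ell(f_2)$.

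The key combinatorial step is to understand the intersection $f_1\cap f_2$ as index sets. Since $f_1\ne f_2$ and both have $\ell+1$ vertices, $e := f_1 \cap f_2$ has at most $\ell$ vertices, i.e. $\dim e \le \ell-1$; and $e$ is a face of both $f_1$ and $f_2$. Then I would use Lemma~\ref{lm:dist}: for $\alpha\in D(f_1,r_\ell)\cap D(f_2,r_\ell)$ we have $|\alpha_{f_1^*}| \le r_\ell$ and $|\alpha_{f_2^*}| \le r_\ell$. The crucial point is the set identity $e^* = f_1^* \cup f_2^*$ (complement of an intersection is the union of complements), hence
\[
 |\alpha_{e^*}| = \sum_{i\in f_1^*\cup f_2^*}\alpha_i \le \sum_{i\in f_1^*}\alpha_i + \sum_{i\in f_2^*}\alpha_i = |\alpha_{f_1^*}| + |\alpha_{f_2^*}| \le 2r_\ell \le r_{\ell-1}.
\]
So $\dist(\alpha, e) \le r_{\ell-1}$, i.e. $\alpha\in D(e,r_{\ell-1})$. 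If $\dim e = \ell - 1$, then $e\in\Delta_{\ell-1}(f_1)$ (and also $\in\Delta_{\ell-1}(f_2)$), so $\alpha$ is removed from $S_\ell(f_1)$. If $\dim e \le \ell-2$, then $e$ is contained in some $(\ell-1)$-face $e'$ of $f_1$, and the monotonicity $D(e,r_{\ell-1}) \subseteq D(e', r_{\ell-1})$ (which follows directly from $\dist(\cdot,e')\le\dist(\cdot,e)$ when $e\subseteq e'$, since $e'^* \subseteq e^*$) shows $\alpha\in D(e',r_{\ell-1})$ with $e'\in\Delta_{\ell-1}(f_1)$, so again $\alpha\notin S_\ell(f_1)$. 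Either way $\alpha$ is in neither $S_\ell(f_1)$ nor $S_\ell(f_2)$... actually it suffices that it is missing from one of them, so the sets are disjoint.

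I expect the only mild subtlety — the "main obstacle," though it is not a serious one — is the bookkeeping when $\dim(f_1\cap f_2) < \ell - 1$: one must make sure the inequality $|\alpha_{e^*}|\le 2r_\ell$ still suffices after enlarging $e$ to an $(\ell-1)$-face $e'$, which works precisely because enlarging the simplex shrinks the opposite complement and therefore only decreases the distance. It is worth recording the monotonicity statement $D(e,r)\subseteq D(e',r)$ for $e\subseteq e'$ explicitly (or citing the remark after the definition of $\dist(e,f)$). The rest is the set-theoretic identity $e^* = f_1^*\cup f_2^*$ and the triangle-type bound above, both immediate.
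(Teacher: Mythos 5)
Your proof is correct and follows essentially the same route as the paper: both arguments bound $|\alpha_{e^*}|\le|\alpha_{f_1^*}|+|\alpha_{f_2^*}|\le 2r_\ell\le r_{\ell-1}$ using $e^*\subseteq f_1^*\cup f_2^*$, the paper simply choosing an $(\ell-1)$-face $e\supseteq f_1\cap f_2$ up front where you enlarge afterwards via the monotonicity $D(e,r)\subseteq D(e',r)$. No gap.
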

\begin{proof}
Consider two different sub-simplices $ f, \tilde f \in \Delta_{\ell} (T)$. The dimension of their intersection is at most $\ell - 1$. Therefore $f\cap \tilde f\subseteq e$ for some $e\in \Delta_{\ell -1}(f)$. Then $e^*\subseteq (f\cap \tilde f )^* = f^*\cup \tilde f^*$. For $\alpha \in D(f, r_{\ell})\cap D(\tilde f, r_{\ell})$, we have $|\alpha_{e^*}| \leq |\alpha_{f^*}| + |\alpha_{\tilde f^*}|\leq 2r_{\ell}\leq r_{\ell - 1}$. Therefore we have shown the intersection region $D(f, r_{\ell})\cap D(\tilde f, r_{\ell})\subseteq \cup_{e\in \Delta_{\ell - 1}(f)} D(e,r_{\ell-1})$ and the result follows.
%
\end{proof}

Next we remove $D(e, r_{i})$ from $D(f, r_{\ell})$ for all $e\in \Delta_{i}(T)$ and $i=0,1,\ldots, \ell-1$. 
\begin{lemma} \label{lm:Deltaf=DeltaT}
Given integer $m\geq 0$, let non-negative integer array $\bs r=(r_0,r_1, \cdots, r_n)$ satisfy
$$
r_{n}=0,\;\; r_{n-1}=m,\;\; r_{\ell}\geq 2r_{\ell+1} \; \textrm{ for } \ell=n-2,\ldots, 0.
$$
Let $k\geq 2r_0+1 \geq 2^n m + 1$. For $\ell = 1,\dots, n-1,$
\begin{equation}\label{eq:Deltaf=DeltaT}
D(f, r_{\ell}) \backslash \left [ \bigcup_{i=0}^{\ell-1}\bigcup_{e\in \Delta_{i}(f)}D(e, r_{i}) \right ] = D(f, r_{\ell}) \backslash \left [ \bigcup_{i=0}^{\ell-1}\bigcup_{e\in \Delta_{i}(T)}D(e, r_{i}) \right ] .
\end{equation}
\end{lemma}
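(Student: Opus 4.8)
The plan is to establish the non-obvious inclusion ``$\subseteq$'' in \eqref{eq:Deltaf=DeltaT}, since the reverse one is free: because $\Delta_i(f)\subseteq\Delta_i(T)$ for every $i$, the set subtracted on the right-hand side contains the one subtracted on the left, and $A\setminus B\subseteq A\setminus C$ whenever $C\subseteq B$, so the right-hand side is already contained in the left-hand side. Hence it suffices to show: if $\alpha\in D(f,r_\ell)$ and $\alpha\notin D(e,r_i)$ for all $i\le\ell-1$ and all $e\in\Delta_i(f)$, then also $\alpha\notin D(e,r_i)$ for all $i\le\ell-1$ and all $e\in\Delta_i(T)$. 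I would argue by contradiction, assuming $\alpha\in D(f,r_\ell)\cap D(e,r_i)$ for some $e\in\Delta_i(T)$ with $i\le\ell-1$, and manufacturing a face $g\in\Delta_j(f)$ with $j\le\ell-1$ and $\alpha\in D(g,r_j)$.

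The face to use is $g:=e\cap f$, regarding $e,f$ as index subsets of $\{0,1,\dots,n\}$; then $g^*=e^*\cup f^*$, so Lemma~\ref{lm:dist} and subadditivity give $|\alpha_{g^*}|\le|\alpha_{e^*}|+|\alpha_{f^*}|\le r_i+r_\ell$. First I would dispose of the degenerate case $g=\varnothing$: then $f\subseteq e^*$, hence $|\alpha_{e^*}|\ge|\alpha_f|=k-|\alpha_{f^*}|\ge k-r_\ell$; since the array $\bs r$ is non-increasing, $r_\ell\le r_0$ and $r_i\le r_0$, and $k\ge 2r_0+1$ then forces $|\alpha_{e^*}|\ge r_0+1>r_i$, contradicting $\alpha\in D(e,r_i)$. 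So $g\neq\varnothing$. Writing $j=\dim g$, the inclusion $g\subseteq e$ gives $j\le i$, while $g=f$ would force $f\subseteq e$ and hence $\ell\le i$, which is impossible; therefore $j\le\ell-1$, i.e.\ $g$ is an admissible face in $\Delta_j(f)$.

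It remains to verify $|\alpha_{g^*}|\le r_j$. If $j=i$, then $g=e$ (equal cardinalities with $g\subseteq e$), so $e=g\subseteq f$ is already a face of $f$, and $\alpha\in D(e,r_i)$ with $e\in\Delta_i(f)$ contradicts the hypothesis on $\alpha$ directly, with no estimate needed. If $j\le i-1$, the doubling condition $r_t\ge 2r_{t+1}$ (valid since $j\le n-2$) together with monotonicity gives $r_j\ge 2r_{j+1}\ge 2r_i$, and $i\le\ell$ gives $r_i\ge r_\ell$, so $r_j\ge 2r_i\ge r_i+r_\ell\ge|\alpha_{g^*}|$. Hence $\alpha\in D(g,r_j)$ with $g\in\Delta_j(f)$ and $j\le\ell-1$, the desired contradiction. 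This closes the argument.

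I expect the only real subtlety to be the case split on $\dim(e\cap f)$: the borderline case $\dim(e\cap f)=\dim e$ is precisely the one where $e$ is already a face of $f$, so the doubling hypothesis is not invoked there; for the genuinely new faces $g$ with $\dim g<\dim e$ one gains exactly the extra factor $2$ from $r_j\ge 2r_{j+1}$, and this is what absorbs the term $r_\ell$ contributed by $f^*$ in the bound $|\alpha_{g^*}|\le r_i+r_\ell$. Everything else is routine bookkeeping with $|\alpha_f|+|\alpha_{f^*}|=k$ and Lemma~\ref{lm:dist}; note also that the degenerate case $g=\varnothing$ is exactly where the hypothesis $k\ge 2r_0+1$ enters.
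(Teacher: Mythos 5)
Your proof is correct and is essentially the contrapositive of the paper's argument: where you assume $\alpha\in D(e,r_i)$ and deduce $\alpha\in D(e\cap f,\,r_{\dim(e\cap f)})$ with $e\cap f\in\Delta_j(f)$, the paper directly bounds $|\alpha_e|=|\alpha_{e\cap f}|+|\alpha_{e\cap f^*}|\le (k-r_{i-1}-1)+r_\ell\le k-r_i-1$; both hinge on $\dim(e\cap f)\le i-1$ for $e\notin\Delta_i(f)$ and on the doubling condition absorbing the $r_\ell$ contribution from $f^*$. Your explicit treatment of the degenerate case $e\cap f=\varnothing$ (handled implicitly in the paper via the $i=0$ case) and the case split on $\dim(e\cap f)=\dim e$ are sound, so no changes are needed.
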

\begin{proof}
In \eqref{eq:Deltaf=DeltaT}, the relation $\supseteq$ is obvious as $\Delta_i(f)\subseteq \Delta_i(T)$. 

To prove $\subseteq$, it suffices to show for $\alpha \in D(f, r_{\ell}) \backslash \left [ \bigcup_{i=0}^{\ell-1}\bigcup_{e\in \Delta_{i}(f)}D(e, r_{i}) \right ]$, it is not in $D(e,r_i)$ for $e\in \Delta_i(T)$ and $e\not\in\Delta_i(f)$. 

By definition, 
$$
|\alpha_{f^*}|\leq r_{\ell},\; |\alpha_{e}|\leq k - r_{i}-1 \; \textrm{ for all }e\in\Delta_i(f), i=0,\ldots,\ell-1.
$$
For each $e\in\Delta_i(T)$ but $e\not\in\Delta_i(f)$, the dimension of the intersection $e \cap f$ is at most $i-1$. It follows from $r_{j}\geq 2r_{j+1}$ and $k\geq 2r_0+1$ that: when $i>0$,
$$
|\alpha_{e}|=|\alpha_{e\cap f}|+|\alpha_{e\cap f^*}|\leq k - r_{i-1}-1+r_{\ell}\leq k - r_{i}-1,
$$
and when $i=0$,
$$
|\alpha_{e}|=|\alpha_{e\cap f^*}|\leq r_{\ell}\leq k - r_{i}-1.
$$
So $|\alpha_{e^*}| > r_i$. We conclude that $\alpha \not\in D(e, r_i)$ for all $e\in\Delta_i(T)$ and \eqref{eq:Deltaf=DeltaT} follows. 
\end{proof}

We are in the position to present our main result. 
\begin{theorem}\label{th:decT}
Given integer $m\geq 0$, let non-negative integer array $\bs r=(r_0,r_1, \cdots, r_n)$ satisfy
$$
r_{n}=0,\;\; r_{n-1}=m,\;\; r_{\ell}\geq 2r_{\ell+1} \; \textrm{ for } \ell=n-2,\ldots, 0.
$$
Let $k\geq 2r_0+1 \geq 2^n m + 1$. Then we have the following direct decomposition of the simplicial lattice  on an $n$-dimensional simplex $T$:
\begin{align}\label{eq:smoothdecnd}
  \mathbb T^{n}_k(T) = \Oplus_{\ell = 0}^{n}\Oplus_{f\in \Delta_{\ell}(T)} S_{\ell}(f),
\end{align}
where
\begin{align*}
S_0(\texttt{v}) &=  D(\texttt{v}, r_0), \\
S_{\ell}(f) &= D(f, r_{\ell}) \backslash \left [ \bigcup_{i=0}^{\ell-1}\bigcup_{e\in \Delta_{i}(f)}D(e, r_{i}) \right ], \; \ell = 1,\dots, n-1, \\
S_n(T) & = \mathbb T^{n}_k(T) \backslash  \left [  \bigcup_{i=0}^{n-1}\bigcup_{f\in \Delta_{i}(T)}D(f, r_{i}) \right ].
\end{align*}
Consequently we have the following geometric decomposition of $\mathbb P_{k}(T)$
\begin{equation}\label{eq:PrSdec}
\mathbb P_{k}(T) = \Oplus_{\ell = 0}^{n} \Oplus_{f\in \Delta_{\ell}(T)} \mathbb P_k(S_{\ell}(f)).
\end{equation}
\end{theorem}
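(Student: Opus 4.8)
The goal is to show that the family $\{S_\ell(f): 0\le \ell\le n, f\in\Delta_\ell(T)\}$ partitions $\mathbb T^n_k(T)$ into pairwise disjoint pieces whose union is everything; the polynomial decomposition \eqref{eq:PrSdec} then follows immediately from the one-to-one correspondence between lattice nodes $\alpha$ and Bernstein monomials $\lambda^\alpha$. So the entire content is a set-theoretic bookkeeping argument about the lattice tubes $D(f,r)$. The plan is to prove (i) the pieces are mutually disjoint, and (ii) they cover $\mathbb T^n_k(T)$, proceeding by induction on the dimension $\ell$ of the subsimplex, peeling off the lower-dimensional tubes one stratum at a time.

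\textbf{Step 1: rewrite $S_\ell(f)$ in ``global'' form.} First I would invoke Lemma~\ref{lm:Deltaf=DeltaT} to replace, for $1\le\ell\le n-1$, the local union $\bigcup_{i=0}^{\ell-1}\bigcup_{e\in\Delta_i(f)}D(e,r_i)$ by the global union $\bigcup_{i=0}^{\ell-1}\bigcup_{e\in\Delta_i(T)}D(e,r_i)$. This makes all the $S_\ell(f)$ for fixed $\ell$ carve out of the \emph{same} set
$$
R_\ell := \left[\bigcup_{f\in\Delta_\ell(T)}D(f,r_\ell)\right]\Big\backslash\left[\bigcup_{i=0}^{\ell-1}\bigcup_{e\in\Delta_i(T)}D(e,r_i)\right],
$$
and similarly $S_n(T)$ is the complement of $\bigcup_{i=0}^{n-1}\bigcup_{f\in\Delta_i(T)}D(f,r_i)$. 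With the convention $R_0:=\bigcup_{\texttt v}D(\texttt v,r_0)$ and $R_n:=S_n(T)$, it suffices to show: (a) within each stratum $\ell$, the sets $\{S_\ell(f)\}_{f\in\Delta_\ell(T)}$ are pairwise disjoint and their union is $R_\ell$; (b) the strata $R_0,R_1,\dots,R_n$ are pairwise disjoint; (c) $\bigcup_{\ell=0}^n R_\ell = \mathbb T^n_k(T)$.

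\textbf{Step 2: dispose of (a), (b), (c).} For (a): disjointness within stratum $\ell$ (for $1\le\ell\le n-1$) is exactly Lemma~\ref{lm:disjoint} combined with Lemma~\ref{lm:Deltaf=DeltaT}; for $\ell=0$ it is the hypothesis $k\ge 2r_0+1$, which forces $D(\texttt v,r_0)\cap D(\texttt v',r_0)=\varnothing$ when $\texttt v\ne\texttt v'$ (if $\alpha$ lay in both, then $|\alpha_{\texttt v^*}|+|\alpha_{\texttt v'^*}|\ge |\alpha|$ restricted appropriately gives a contradiction; more simply $|\alpha_{\texttt v}|\ge k-r_0$ and $|\alpha_{\texttt v'}|\ge k-r_0$ with $\texttt v,\texttt v'$ disjoint singletons forces $k\ge|\alpha_{\texttt v}|+|\alpha_{\texttt v'}|\ge 2(k-r_0)$, i.e. $k\le 2r_0$). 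That the union over $f$ equals $R_\ell$ is immediate once the removed set is in global form, since set difference distributes over the union in the first argument. For (b): if $\ell<\ell'$ then $R_{\ell'}$ has, by construction, all of $\bigcup_{i\le\ell'-1}\bigcup_e D(e,r_i)$ removed, in particular $\bigcup_{f\in\Delta_\ell(T)}D(f,r_\ell)$ removed, so $R_{\ell'}\cap R_\ell=\varnothing$; the case $\ell'=n$ is the same with $R_n=S_n(T)$. For (c): this is a telescoping identity — writing $U_\ell:=\bigcup_{i=0}^{\ell}\bigcup_{f\in\Delta_i(T)}D(f,r_i)$, one has $R_0=U_0$, $R_\ell=U_\ell\setminus U_{\ell-1}$ for $1\le\ell\le n-1$, and $R_n=\mathbb T^n_k(T)\setminus U_{n-1}$, so $\bigcup_\ell R_\ell = U_{n-1}\cup(\mathbb T^n_k(T)\setminus U_{n-1})=\mathbb T^n_k(T)$. (One should note $R_\ell=U_\ell\setminus U_{\ell-1}$ rather than $\bigl(\bigcup_f D(f,r_\ell)\bigr)\setminus U_{\ell-1}$ only differ by the part of $U_{\ell-1}$ outside the $\ell$-tubes, which is already in $U_{\ell-1}$; since we take the union this is harmless, but I would state the telescoping carefully.)

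\textbf{Step 3: pass to polynomials.} Given the lattice partition \eqref{eq:smoothdecnd}, the decomposition \eqref{eq:PrSdec} follows because $\{\lambda^\alpha:\alpha\in\mathbb T^n_k\}$ is a basis of $\mathbb P_k(T)$ and $\mathbb P_k(S_\ell(f))=\spa\{\lambda^\alpha:\alpha\in S_\ell(f)\}$ by definition; a basis of a vector space splits as a direct sum over any partition of the index set, so $\mathbb P_k(T)=\bigoplus_{\ell,f}\mathbb P_k(S_\ell(f))$ with the sum direct. \textbf{The main obstacle} is not any single hard estimate — the two genuinely nontrivial inequalities (disjointness within a stratum, and that removing $\Delta_i(f)$-tubes equals removing $\Delta_i(T)$-tubes) are already packaged as Lemmas~\ref{lm:disjoint} and \ref{lm:Deltaf=DeltaT}. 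Rather, the delicate point is organizing the telescoping/complement argument so that (a), (b), (c) fit together without circularity, and in particular being careful that the \emph{local} form of $S_\ell(f)$ in the statement agrees with the \emph{global} form used in the proof precisely under the hypotheses $r_{\ell-1}\ge 2r_\ell$ and $k\ge 2r_0+1$ — which is where Lemma~\ref{lm:Deltaf=DeltaT} (and hence the full chain of inequalities $2r_0+1\ge 2^n m+1$) enters.
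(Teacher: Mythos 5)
Your proposal is correct and follows essentially the same route as the paper: disjointness within each stratum via Lemma~\ref{lm:disjoint} together with Lemma~\ref{lm:Deltaf=DeltaT} (plus the direct $k\geq 2r_0+1$ argument for vertices), and coverage via the telescoping identity $\Oplus_{i=0}^{\ell}\Oplus_{f} S_i(f)=\bigcup_{i=0}^{\ell}\bigcup_{f} D(f,r_i)$, which the paper proves by the same induction on $\ell$ that your $R_\ell=U_\ell\setminus U_{\ell-1}$ bookkeeping encodes. The passage to \eqref{eq:PrSdec} via the Bernstein basis is likewise identical.
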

\begin{proof}
First we show that the sets  $\{S_{\ell}(f), f\in\Delta_{\ell}(T), \ell=0,\ldots,n\}$ are disjoint.
Take two vertices $\texttt{v}_1, \texttt{v}_2\in \Delta_0(T)$. For $\alpha\in D(\texttt{v}_1, r_0)$, we have $\alpha_{\texttt{v}_1} \geq k - r_0$. As $\texttt{v}_1\subseteq \texttt{v}_2^*$ and $k\geq 2r_0+1$, $|\alpha_{\texttt{v}_2^*}|\geq \alpha_{\texttt{v}_1}\geq k-r_0\geq r_0+1$, i.e., $\alpha \notin D(\texttt{v}_2, r_0)$. Hence $\{S_{0}(\texttt{v}), \texttt{v}\in\Delta_{0}(T)\}$ are disjoint and $\Oplus_{\texttt{v}\in\Delta_{0}(T)} S_{0}(\texttt{v})$ is a disjoint union.
By Lemma \ref{lm:disjoint} and \eqref{eq:Deltaf=DeltaT}, we know $\{S_{\ell}(f), f\in\Delta_{\ell}(T), \ell=0,\ldots,n\}$ are disjoint.

Next we inductively prove 
\begin{equation}\label{eq:USequalUD}
\Oplus_{i = 0}^{\ell}\Oplus_{f\in \Delta_{i}(T)} S_{i}(f)=\bigcup_{i=0}^{\ell}\bigcup_{f\in \Delta_{i}(T)}D(f, r_{i}) \quad \textrm{ for }\; \ell=0,\ldots, n-1.
\end{equation}
Obviously \eqref{eq:USequalUD} holds for $\ell=0$. Assume \eqref{eq:USequalUD} holds for $\ell<j$. Then
\begin{align*}
&\quad\Oplus_{i = 0}^{j}\Oplus_{f\in \Delta_{i}(T)} S_{i}(f)= \Oplus_{f\in \Delta_{j}(T)} S_{j}(f)\;\oplus \;\bigcup_{i=0}^{j-1}\bigcup_{e\in \Delta_{i}(T)}D(e, r_{i}) \\
&= \Oplus_{f\in \Delta_{j}(T)}\left(D(f, r_{j}) \backslash \left [ \bigcup_{i=0}^{j-1}\bigcup_{e\in \Delta_{i}(T)}D(e, r_{i}) \right ]\right)\;\oplus \;\bigcup_{i=0}^{j-1}\bigcup_{e\in \Delta_{i}(T)}D(e, r_{i}) \\
&=\bigcup_{i=0}^{j}\bigcup_{f\in \Delta_{i}(T)}D(f, r_{i}).
\end{align*}
By induction, \eqref{eq:USequalUD} holds for $\ell=0,\ldots, n-1$. Then \eqref{eq:smoothdecnd} is true from the definition of $S_n(T)$ and \eqref{eq:USequalUD}. 
\end{proof}

We can write out the inequality constraints in $S_{\ell}(f)$. For $\ell = 1,\dots, n$,
\begin{equation}\label{eq:Slfineqlty}
S_{\ell}(f) =  \{\alpha\in\mathbb T_{k}^{n}: |\alpha_{f^*}|\leq r_{\ell}, |\alpha_{e}|\leq k - r_{i}-1, \forall e\in\Delta_i(f), i=0,\ldots,\ell-1\}.
\end{equation}
For $\alpha\in S_{\ell}(f)$, by Lemma~\ref{lm:disjoint} we also have $\alpha\not\in D(\tilde f, r_{\ell})$ for $\tilde f \in \Delta_{\ell} (T)\backslash\{f\}$, i.e.
\begin{equation}\label{eq:Slfineqlty2}
|\alpha_{\tilde f}|\leq k - r_{\ell}-1 \quad\forall~\tilde f \in \Delta_{\ell} (T)\backslash\{f\}.
\end{equation}
From the implementation point of view, the index set $S_{\ell}(f)$ can be found by a logic array and set the entry as true when the distance constraint holds. 

\subsection{Decomposition of degree of freedoms}
Recall that $L(f,s) = \{ \alpha \in \mathbb T^{n}_k, \dist(\alpha,f) = s\}$ consists of lattice nodes $s$ away from $f$. The following unisolvence is the generalization of Lemma \ref{lem:vertexunisolvence} from a vertex to a sub-simplex $f$. 
\begin{lemma}
Let $\ell=0,\ldots, n-1$ and $s\leq r_{\ell}$ be a non-negative integer. Given $f\in \Delta_{\ell}(T)$, let $n_f = \{n_f^1, n_f^2, \ldots, n_f^{n - \ell}\}$ be $n- \ell$ vectors spanning the normal plane of $f$. The polynomial space $\mathbb P_k(S_{\ell}(f)\cap L(f,s))$ is uniquely determined by DoFs
\begin{equation}\label{eq:normalDof}
\int_f  \frac{\partial^{\beta} u}{\partial n_f^{\beta}} \, \lambda_f^{\alpha_f} \dd s \quad \forall~\alpha\in S_{\ell}(f), |\alpha_f| = k - s, \beta \in \mathbb N^{1:n-\ell}, |\beta | = s.
\end{equation}
\end{lemma}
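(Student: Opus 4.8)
The statement is the natural generalization of Lemma~\ref{lem:vertexunisolvence} (the vertex case $\ell=0$) and of Lemma~\ref{lem:edgeunisolvence2d} (the edge case in two dimensions), so the plan is to imitate those proofs, replacing "vertex $\texttt{v}_i$" by "sub-simplex $f$" and "normal direction" by the $n-\ell$ normal vectors $n_f^1,\ldots,n_f^{n-\ell}$. First I would check that the dimensions match: a basis of $\mathbb P_k(S_\ell(f)\cap L(f,s))$ is $\{\lambda^\alpha : \alpha\in S_\ell(f), |\alpha_{f^*}|=s\}$, and writing $\lambda^\alpha=\lambda_f^{\alpha_f}\lambda_{f^*}^{\alpha_{f^*}}$ with $|\alpha_f|=k-s$, the assignment $\alpha_{f^*}\mapsto\beta$ is a bijection between $\{\alpha_{f^*}\in\mathbb N^{1:n-\ell}:|\alpha_{f^*}|=s\}$ and $\{\beta\in\mathbb N^{1:n-\ell}:|\beta|=s\}$ (this is exactly the sub-simplicial lattice $\mathbb T^{n-\ell-1}_s(f^*)$ indexing both sides), so the count of DoFs in \eqref{eq:normalDof} equals $\dim\mathbb P_k(S_\ell(f)\cap L(f,s))$. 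Hence it suffices to prove that if $u\in\mathbb P_k(S_\ell(f)\cap L(f,s))$ makes all the functionals in \eqref{eq:normalDof} vanish, then $u=0$.

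The core step is to choose, as in Lemma~\ref{lem:vertexunisolvence}, a basis $\{l^1,\ldots,l^{n-\ell}\}$ of the normal plane of $f$ dual to $\{\nabla\lambda_{f^*(1)},\ldots,\nabla\lambda_{f^*(n-\ell)}\}$ restricted to that plane — concretely one can take $l^j$ to be an appropriate edge vector from a vertex of $f$ to $\texttt{v}_{f^*(j)}$ — so that $\nabla\lambda_{f^*(i)}\cdot l^j=\delta_{ij}$. Define the rotated normal derivative $D_n^\beta u := \partial^{|\beta|}u/\partial(l^1)^{\beta_1}\cdots\partial(l^{n-\ell})^{\beta_{n-\ell}}$; since $\{l^j\}$ and $\{n_f^j\}$ both span the normal plane of $f$, vanishing of all $\int_f (\partial^\beta u/\partial n_f^\beta)\,\lambda_f^{\alpha_f}\,\dd s$ for $|\beta|=s$ is equivalent to vanishing of all $\int_f (D_n^\beta u)\,\lambda_f^{\alpha_f}\,\dd s$ for $|\beta|=s$ (a linear change of variables in the $s$-th order normal jet, invertible because it is induced by a power of an invertible linear map). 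The key computation is the analogue of \eqref{eq:Dn}: for $\alpha_{f^*},\beta\in\mathbb T^{n-\ell-1}_s(f^*)$ one has $D_n^\beta(\lambda_{f^*}^{\alpha_{f^*}})\big|_f = \beta!\,\delta(\alpha_{f^*},\beta)$, because differentiating $\lambda_{f^*}^{\alpha_{f^*}}$ fewer than $|\alpha_{f^*}|$ times in normal directions leaves a factor $\lambda_i$, $i\in f^*$, which vanishes on $f$ (Lemma~\ref{lm:derivative}), while differentiating exactly $|\alpha_{f^*}|$ times gives $\beta!$ when $\beta=\alpha_{f^*}$ by the duality relation and $0$ otherwise. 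Meanwhile $\lambda_f^{\alpha_f}$ restricted to $f$ is an honest Bernstein polynomial on $f$, and the family $\{\lambda_f^{\alpha_f}|_f\}$ over the relevant $\alpha_f$ is linearly independent, so testing against $\lambda_f^{\alpha_f}$ on $f$ and using the delta formula isolates each coefficient.

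Putting these together: write $u=\sum_{\alpha}c_\alpha\lambda^\alpha$ over $\alpha\in S_\ell(f)\cap L(f,s)$, i.e. $u=\sum c_\alpha\lambda_f^{\alpha_f}\lambda_{f^*}^{\alpha_{f^*}}$ with $|\alpha_{f^*}|=s$ fixed. For each $\beta$ with $|\beta|=s$, restrict $D_n^\beta u$ to $f$; by the delta formula only the term with $\alpha_{f^*}=\beta$ survives, giving $D_n^\beta u|_f = \beta!\sum_{\alpha_f}c_{(\alpha_f,\beta)}\lambda_f^{\alpha_f}|_f$ up to a nonzero constant. The vanishing DoF $\int_f (D_n^\beta u)\lambda_f^{\alpha_f}\,\dd s=0$ for all admissible $\alpha_f$, together with the linear independence (equivalently, positivity of the Gram matrix) of the Bernstein polynomials $\{\lambda_f^{\alpha_f}|_f\}$ on the open face $\stackrel{\circ}{f}$, forces $c_{(\alpha_f,\beta)}=0$ for all $\alpha_f$. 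Ranging over $\beta$ gives $u=0$. I expect the main obstacle to be bookkeeping rather than conceptual: carefully justifying that the $s$-th order normal jet on $f$ of $u$ is captured faithfully by the DoFs — that is, that passing from the given spanning set $\{n_f^j\}$ of the normal plane to the dual basis $\{l^j\}$, and then extracting a single Bernstein coefficient in the purely normal direction, is legitimate — and confirming that the index set $\{\alpha_f : (\alpha_f,\beta)\in S_\ell(f)\}$ is the same for every $\beta$ with $|\beta|=s$ (which it is, since the constraints defining $S_\ell(f)$ in \eqref{eq:Slfineqlty} separate into a condition on $|\alpha_{f^*}|$ and conditions on $|\alpha_e|$ for $e\in\Delta_i(f)$, and on $L(f,s)$ the first is just $|\alpha_{f^*}|=s$), so that the DoF–function matrix really is block diagonal in $\beta$ and invertible on each block.
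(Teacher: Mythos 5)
Your proposal is correct and follows essentially the same route as the paper: match dimensions via the bijection $\alpha_{f^*}\mapsto\beta$, take the basis of the normal plane dual to $\{\nabla\lambda_{f^*(i)}\}$ to get $\frac{\partial^{\beta}}{\partial n_f^{\beta}}(\lambda_{f^*}^{\alpha_{f^*}})\big|_f=\beta!\,\delta(\alpha_{f^*},\beta)$, and conclude from linear independence of the $\lambda_f^{\alpha_f}$ on $f$. The paper simply \emph{chooses} $n_f^j$ to be that dual basis, whereas you keep the $n_f^j$ arbitrary and add the (correct, and slightly more careful) observation that the change of basis acts invertibly on the order-$s$ normal jet, together with the check that $S_\ell(f)\cap L(f,s)$ is a product set in $(\alpha_f,\alpha_{f^*})$ — both minor refinements of the same argument.
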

\begin{proof}
A basis of $\mathbb P_k(S_{\ell}(f)\cap L(f,s))$ is
$\{ \lambda^{\alpha} = \lambda_f^{\alpha_f}\lambda_{f^*}^{\alpha_{f^*}} , \alpha \in S_{\ell}(f), | \alpha_{f^*} | = s \}$ and thus the dimensions match (by mapping $\alpha_{f^*}$ to $\beta$).

We choose a basis of the normal plane $\{n_f^1, n_f^2, \ldots, n_f^{n - \ell}\}$ s.t. it is dual to the vectors $\{ \nabla \lambda_{f^*(1)}, \nabla \lambda_{f^*(2)}, \ldots, \}$, i.e., $\nabla \lambda_{f^*(i)}\cdot n_f^j = \delta_{i,j}$ for $i,j=1,\ldots, n - \ell$. Then we have the duality
\begin{equation}\label{eq:Dbeta}
\frac{\partial^{\beta} }{\partial n_f^{\beta}} (\lambda_{f^*}^{\alpha_{f^*}}) = \beta!\delta( \alpha_{f^*}, \beta), \quad \alpha_{f^*}, \beta\in \mathbb N^{1:n-\ell}, |\alpha_{f^*}| =|\beta| = s,
\end{equation}
which can be proved easily by induction on $s$. When $T$ is the reference simplex $\hat T$, $\lambda_i = x_i$ and $\nabla \lambda_i = - \bs{e}_i$, \eqref{eq:Dbeta} is the calculus result $D^{\beta}_{n_f}\bs x_{f^*}^{\alpha_{f^*}}=\beta!\delta( \alpha_{f^*}, \beta)$. 

Assume $u = \sum c_{\alpha_f, \alpha_{f^*}}  \lambda_f^{\alpha_f}\lambda_{f^*}^{\alpha_{f^*}} \in \mathbb P_k(S_{\ell}(f)\cap L(f,s))$. If the derivative is not fully applied to the component $\lambda_{f^*}^{\alpha_{f^*}}$, then there is a term $\lambda_{f^*}^{\gamma}$ with $|\gamma| > 0$ left and $\lambda_{f^*}^{\gamma}|_f = 0$. So for any $\beta \in \mathbb N^{1:n-\ell}$ and $|\beta | = s$,
\begin{equation}\label{eq:normalderivative}
\frac{\partial^{\beta} u }{\partial n_f^{\beta}} |_f = \beta! \sum_{\alpha\in S_{\ell}(f),|\alpha_f| = k- s} c_{\alpha_f, \beta}  \lambda_f^{\alpha_f}.
\end{equation}
The vanishing DoF \eqref{eq:normalDof} implies $\sum\limits_{ \alpha\in S_{\ell}(f),|\alpha_f| = k- s} c_{\alpha_f, \beta}  \lambda_f^{\alpha_f}|_f=0$.
Hence $c_{\alpha_f, \beta} = 0$ for all $|\alpha_f| = k- s, \alpha\in S_{\ell}(f)$. As $\beta$ is arbitrary, we conclude all coefficients $ c_{\alpha_f, \alpha_{f^*}} = 0$ and thus $u = 0$.
\end{proof}


For $u\in\mathbb P_k(S_{\ell}(f)\cap L(f,s))$ and $\beta\in\mathbb N^{1:n-\ell}$ with $|\beta|<s$, by Lemma~\ref{lm:derivative}, $\frac{\partial^{\beta} u}{\partial n_f^{\beta}}|_f=0$.
Applying the operator $\frac{\partial^{\beta} (\cdot)}{\partial n_f^{\beta}}|_f$ to the direct decomposition $\mathbb P_k(S_{\ell}(f)) = \Oplus_{s=0}^{r_{\ell}} \mathbb P_k(S_{\ell}(f)\cap L(f,s))$ will possess a block lower triangular structure and leads to the following unisolvence result.

\begin{lemma}\label{lem:faceunisolvence}
Let $\ell=0,\ldots, n-1$. The polynomial space $\mathbb P_k(S_{\ell}(f))$ is uniquely determined by DoFs
\begin{equation*}
\int_f  \frac{\partial^{\beta} u}{\partial n_f^{\beta}} \, \lambda_f^{\alpha_f} \dd s \quad \forall~\alpha\in S_{\ell}(f), |\alpha_f| = k - s, \beta \in \mathbb N^{1:n-\ell}, |\beta | = s, s=0,\ldots, r_{\ell}.
\end{equation*}
\end{lemma}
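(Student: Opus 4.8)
The plan is to deduce the lemma from the slice-wise unisolvence just established together with the distance grading of $\mathbb{P}_k(S_\ell(f))$. Since $S_\ell(f)\subseteq D(f,r_\ell)=\bigcup_{s=0}^{r_\ell}L(f,s)$ and the sets $L(f,s)$ are pairwise disjoint, one has the direct decomposition
\[
\mathbb{P}_k(S_\ell(f))=\Oplus_{s=0}^{r_\ell}\mathbb{P}_k\big(S_\ell(f)\cap L(f,s)\big),
\]
already noted above. Grouping the prescribed DoFs by the value $s=|\beta|$, the block with index $s$ has cardinality equal to $\dim\mathbb{P}_k(S_\ell(f)\cap L(f,s))$ by the dimension count in the previous lemma; summing over $s$ shows the total number of DoFs equals $\dim\mathbb{P}_k(S_\ell(f))$, so it remains to prove that the DoF--function matrix is invertible.

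The key step is to order the basis functions $\lambda^\alpha=\lambda_f^{\alpha_f}\lambda_{f^*}^{\alpha_{f^*}}$, $\alpha\in S_\ell(f)$, by $|\alpha_{f^*}|$ and the DoFs by $|\beta|$, and observe that the resulting matrix is block lower triangular. Indeed, for $\alpha$ with $|\alpha_{f^*}|=s$, i.e.\ $\dist(\alpha,f)=s$, and for $\beta\in\mathbb{N}^{1:n-\ell}$ with $|\beta|<s$, the operator $\partial^\beta/\partial n_f^\beta$ is a constant-coefficient linear combination of partial derivatives $D^\gamma$ with $|\gamma|=|\beta|<\dist(\alpha,f)$, so $\partial^\beta\lambda^\alpha/\partial n_f^\beta|_f=0$ by Lemma~\ref{lm:derivative}; hence the $(\beta,\alpha)$ entry vanishes whenever $|\beta|<|\alpha_{f^*}|$. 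The diagonal blocks, corresponding to $|\beta|=|\alpha_{f^*}|=s$, are precisely the DoF--function matrices of $\mathbb{P}_k(S_\ell(f)\cap L(f,s))$ treated in the previous lemma, hence invertible. A block lower triangular matrix with invertible diagonal blocks is invertible, which is equivalent to unisolvence; equivalently, if all the listed DoFs of some $u\in\mathbb{P}_k(S_\ell(f))$ vanish, then processing $s=0,1,\dots,r_\ell$ in turn forces the component of $u$ in $\mathbb{P}_k(S_\ell(f)\cap L(f,s))$ to vanish at each stage, so $u=0$.

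I do not anticipate a genuine obstacle: the work has been front-loaded into the slice-wise unisolvence and into Lemma~\ref{lm:derivative}, and this is the same ``sort by a grading, obtain a block triangular system'' argument used for the Lagrange and Hermite elements earlier. The only point deserving an explicit line is the reduction of the normal derivative $\partial^\beta/\partial n_f^\beta$ to a combination of standard partial derivatives, so that Lemma~\ref{lm:derivative} applies verbatim to each term.
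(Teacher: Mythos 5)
Your proposal is correct and follows essentially the same route as the paper: decompose $\mathbb P_k(S_{\ell}(f))=\Oplus_{s=0}^{r_{\ell}}\mathbb P_k(S_{\ell}(f)\cap L(f,s))$, use Lemma~\ref{lm:derivative} to kill normal derivatives of order $|\beta|<\dist(\alpha,f)$ and obtain a block lower triangular DoF--function matrix, and invoke the slice-wise unisolvence for the diagonal blocks. The only addition beyond the paper's (very terse) argument is your explicit remark that $\partial^{\beta}/\partial n_f^{\beta}$ expands into constant-coefficient combinations of $D^{\gamma}$ with $|\gamma|=|\beta|$, which is a harmless and correct clarification.
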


Together with decomposition \eqref{eq:PrSdec} of the polynomial space, we obtain the following result.
\begin{theorem}\label{th:localPrCm}
 Given integer $m\geq 0$, let non-negative integer array $\bs r=(r_0,r_1, \cdots, r_n)$ satisfy
$$
r_{n}=0,\;\; r_{n-1}=m,\;\; r_{\ell}\geq 2r_{\ell+1} \; \textrm{ for } \ell=n-2,\ldots, 0.
$$
Let $k\geq 2r_0+1 \geq 2^n m + 1$. Then the shape function $\mathbb P_k(T)$ is uniquely determined by the following DoFs
\begin{align}
\label{eq:C1nd0}
D^{\alpha} u (\texttt{v}) & \quad \alpha \in \mathbb N^{1:n}, |\alpha | \leq  r_0, \texttt{v}\in \Delta_0(T),\\
\label{eq:C1nd2}
\int_f \frac{\partial^{\beta} u}{\partial n_f^{\beta}}  \, \lambda_f^{\alpha_f} \dd s & \quad \alpha\in S_{\ell}(f), |\alpha_f| = k - s, \beta \in \mathbb N^{1:n-\ell}, |\beta | = s,\\
&\quad f\in \Delta_{\ell}(T), \ell =1,\ldots, n-1, s=0,\ldots, r_{\ell}, \notag \\
\label{eq:C1nd3}
\int_T u \lambda^{\alpha} \dx & \quad \alpha \in S_n(T).
\end{align}
\end{theorem}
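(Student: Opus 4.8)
The plan is to derive unisolvence from the geometric decomposition \eqref{eq:PrSdec} together with the componentwise unisolvence results already proved. A dimension count comes first: by \eqref{eq:PrSdec}, $\dim\mathbb P_k(T)=\sum_{\ell=0}^n\sum_{f\in\Delta_\ell(T)}\dim\mathbb P_k(S_\ell(f))$; Lemma~\ref{lem:vertexunisolvence} (used with $r_0$ in place of $m$, which is legitimate since $k\ge 2r_0+1$) identifies $\dim\mathbb P_k(S_0(\texttt{v}))$ with the number of DoFs \eqref{eq:C1nd0} at $\texttt{v}$, Lemma~\ref{lem:faceunisolvence} identifies $\dim\mathbb P_k(S_\ell(f))$ with the number of DoFs \eqref{eq:C1nd2} attached to $f$ for $\ell=1,\dots,n-1$, and $\dim\mathbb P_k(S_n(T))=|S_n(T)|$ is the number of DoFs \eqref{eq:C1nd3}. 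Hence the total number of DoFs equals $\dim\mathbb P_k(T)$, so it suffices to show that any $u\in\mathbb P_k(T)$ annihilating all the DoFs \eqref{eq:C1nd0}--\eqref{eq:C1nd3} must be $0$. Write $u=\sum_{\ell=0}^n\sum_{f\in\Delta_\ell(T)}u_{\ell,f}$ with $u_{\ell,f}\in\mathbb P_k(S_\ell(f))$ according to \eqref{eq:PrSdec}.

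The heart of the argument is an induction on $\ell$ proving $u_{\ell,f}=0$ for every $f\in\Delta_\ell(T)$, and its geometric input is the containment
\begin{equation*}
D(f,r_\ell)\ \subseteq\ S_\ell(f)\ \cup\ \bigcup_{i=0}^{\ell-1}\bigcup_{e\in\Delta_i(T)}D(e,r_i),\qquad f\in\Delta_\ell(T),\ 0\le\ell\le n-1,
\end{equation*}
which is immediate from the definition of $S_\ell(f)$ and the identity \eqref{eq:Deltaf=DeltaT}, together with the fact from \eqref{eq:USequalUD} that the right-hand union equals $\Oplus_{i=0}^{\ell-1}\Oplus_{e\in\Delta_i(T)}S_i(e)$. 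Since the family $\{S_j(\tilde f)\}$ is pairwise disjoint (proof of Theorem~\ref{th:decT}), this shows that $D(f,r_\ell)$ is disjoint from every $S_j(\tilde f)$ with $j\ge\ell$ and $(j,\tilde f)\ne(\ell,f)$; equivalently, any lattice node $\alpha$ supporting such a component $u_{j,\tilde f}$ has $\dist(\alpha,f)>r_\ell$. Assume inductively $u_{i,e}=0$ for all $i<\ell$, so that $u=\sum_{j\ge\ell}\sum_{\tilde f\in\Delta_j(T)}u_{j,\tilde f}$; then $u-u_{\ell,f}$ is a combination of Bernstein polynomials $\lambda^\alpha$ with $\dist(\alpha,f)>r_\ell$, hence by Lemma~\ref{lm:derivative} $D^\gamma(u-u_{\ell,f})|_f=0$ for all $|\gamma|\le r_\ell$. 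Because each $\partial^\beta/\partial n_f^\beta$ with $|\beta|\le r_\ell$ is a fixed linear combination of the $D^\gamma$ with $|\gamma|=|\beta|$, the DoFs \eqref{eq:C1nd2} attached to $f$ (respectively \eqref{eq:C1nd0} when $\ell=0$) take the same values on $u$ and on $u_{\ell,f}$; these vanish, so Lemma~\ref{lem:faceunisolvence} (respectively Lemma~\ref{lem:vertexunisolvence}) forces $u_{\ell,f}=0$. Carrying the induction through $\ell=n-1$ leaves $u=u_{n,T}\in\mathbb P_k(S_n(T))$.

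Finally, since $\{\lambda^\alpha:\alpha\in S_n(T)\}$ is a basis of $\mathbb P_k(S_n(T))$, the vanishing of the DoFs \eqref{eq:C1nd3} is precisely the statement that $\int_T u\,v\dx=0$ for all $v\in\mathbb P_k(S_n(T))$; taking $v=u$ yields $\|u\|_{L^2(T)}^2=0$, hence $u=0$, which completes the proof.

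I expect the only genuinely delicate point to be the disjointness/containment step that lets one read the DoFs attached to $f$ off $u_{\ell,f}$ alone — this is exactly where the hypotheses $r_{\ell-1}\ge 2r_\ell$ and $k\ge 2r_0+1$ enter, and it is already packaged in Lemmas~\ref{lm:disjoint} and~\ref{lm:Deltaf=DeltaT}. With those in hand the remainder is the familiar block-lower-triangular bookkeeping, identical in spirit to the Lagrange, Hermite, and two-dimensional proofs.
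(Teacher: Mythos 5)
Your proof is correct and follows essentially the same route as the paper: a dimension count via the decomposition \eqref{eq:PrSdec}, followed by the observation that lattice nodes outside $S_\ell(f)\cup\bigl(\Oplus_{i<\ell}\Oplus_e S_i(e)\bigr)$ lie at distance greater than $r_\ell$ from $f$, so the DoF--function matrix is block lower triangular with invertible diagonal blocks (Lemmas~\ref{lem:vertexunisolvence} and~\ref{lem:faceunisolvence}). The only cosmetic difference is that you obtain the off-diagonal vanishing from the disjointness of the $S_j(\tilde f)$ via \eqref{eq:USequalUD}, while the paper reads it off the explicit inequalities \eqref{eq:Slfineqlty}--\eqref{eq:Slfineqlty2}; both rest on Lemmas~\ref{lm:disjoint} and~\ref{lm:Deltaf=DeltaT}.
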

\begin{proof}
Thanks to the decomposition \eqref{eq:PrSdec}, the dimensions match.
Take $u\in\mathbb P_k(T)$ satisfy all the DoFs \eqref{eq:C1nd0}-\eqref{eq:C1nd3} vanish. We are going to show $u=0$.

For $\alpha\in S_{\ell}(f)$ and $e\in\Delta_i(T)$ with $i\leq\ell$ and $e\neq f$,
by \eqref{eq:Slfineqlty} and \eqref{eq:Slfineqlty2} we have $|\alpha_{e^*}|\geq r_{i}+1$, hence $\frac{\partial^{\beta}\lambda^{\alpha}}{\partial n_e^{\beta}}|_e=0$ for $\beta\in\mathbb N^{1:n-i}$ with $|\beta|\leq r_i$.
Again this tells us that applying the operator $\frac{\partial^{\beta}(\cdot)}{\partial n_f^{\beta}}|_f$ to the direct decomposition $\mathbb P_{k}(T) = \Oplus_{\ell = 0}^{n} \Oplus_{f\in \Delta_{\ell}(T)} \mathbb P_k(S_{\ell}(f))$ will produce a block lower triangular structure. Then apply Lemma~\ref{lem:faceunisolvence}, we conclude $u\in\mathbb P_k(S_{n}(T))$, which together with the vanishing DoF \eqref{eq:C1nd3} gives $u=0$.
\end{proof}

\begin{remark}\rm
For $\alpha\in S_{\ell}(f)$, by \eqref{eq:Slfineqlty} we have $|\alpha_{e}|\leq k - r_{\ell-1}-1$ for all $e\in\Delta_{\ell-1}(f)$, then $\alpha_f\geq r_{\ell-1}+1-|\alpha_{f^*}|$, and
$$
\lambda^{\alpha}=\lambda_{f^*}^{\alpha_{f^*}}\lambda_f^{\alpha_f}=\lambda_{f^*}^{\alpha_{f^*}}\lambda_f^{r_{\ell-1}+1-|\alpha_{f^*}|}\lambda_f^{\alpha_f-(r_{\ell-1}+1)+|\alpha_{f^*}|}.
$$
As the two-dimensional case, using $\alpha_f-(r_{\ell-1}+1)+|\alpha_{f^*}|$ as the new index, DoFs \eqref{eq:C1nd2}-\eqref{eq:C1nd3} can be replaced by
\begin{align*}
\int_f \frac{\partial^{\beta} u}{\partial n_f^{\beta}}  \, \lambda_f^{\alpha} \dd s & \quad \beta \in \mathbb N^{1:n-\ell}, |\beta | = s, s=0,\ldots, r_{\ell}, \;\;\alpha\in\mathbb T_{k-(\ell+1)(r_{\ell-1}+1)+\ell s}^{\ell}, \\
&\quad |\alpha_{e}|\leq k - r_{i}-1-(i+1)(r_{\ell-1}+1-s), \forall e\in\Delta_i(f), i=0,\ldots,\ell-2, \\
&\quad f\in \Delta_{\ell}(T), \ell =1,\ldots, n-1, \notag \\
\int_T u \lambda^{\alpha} \dx & \quad \alpha \in \mathbb T_{k-(n+1)(m+1)}^{n},\\
&\quad |\alpha_{e}|\leq k - r_{i}-1-(i+1)(m+1), \forall e\in\Delta_i(T), i=0,\ldots,n-2.
\end{align*}
Namely we can remove bubble functions in the test function space. 
\end{remark}

\subsection{Smooth finite elements in arbitrary dimension}
Given a triangulation $\mathcal T_h$, the finite element space is obtained by asking the DoFs depending on the sub-simplex only.

\begin{theorem}\label{th:C1Rn}
 Given integer $m\geq 0$, let non-negative integer array $\bs r=(r_0,r_1, \cdots, r_n)$ satisfy
$$
r_{n}=0,\;\; r_{n-1}=m,\;\; r_{\ell}\geq 2r_{\ell+1} \; \textrm{ for } \ell=n-2,\ldots, 0.
$$
Let $k\geq 2r_0+1 \geq 2^n m + 1$. The following DoFs
\begin{align}
\label{eq:C1Rnd0}
D^{\alpha} u (\texttt{v}) & \quad \alpha \in \mathbb N^{1:n}, |\alpha | \leq  r_0, \texttt{v}\in \Delta_0(\mathcal T_h),\\
\label{eq:C1Rndf}
\int_f \frac{\partial^{\beta} u}{\partial n_f^{\beta}}  \, \lambda_f^{\alpha_f} \dd s & \quad \alpha\in S_{\ell}(f), |\alpha_f| = k - s, \beta \in \mathbb N^{1:n-\ell}, |\beta | = s, s=0,\ldots, r_{\ell},\\
&\quad f\in \Delta_{\ell}(\mathcal T_h), \ell =1,\ldots, n-1, \notag \\
\label{eq:C1RndT}
\int_T u \lambda^{\alpha} \dx & \quad \alpha \in S_n(T), T\in \mathcal T_h,
\end{align}
will define a finite element space
$$
V_h = \{ u \in C^{m}(\Omega) \mid \text{ DoFs }\eqref{eq:C1Rnd0}-\eqref{eq:C1Rndf} \text{ are single valued}, u|_{T}\in \mathbb P_k(T), \forall T\in \mathcal T_h \}.
$$
\end{theorem}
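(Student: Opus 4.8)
\emph{Proof plan.} The local half of the assertion is already available. By Theorem~\ref{th:localPrCm}, on a single simplex $T$ the shape function space $\mathbb P_k(T)$ is unisolvent with respect to the DoFs \eqref{eq:C1nd0}--\eqref{eq:C1nd3}, which are precisely the restrictions to $T$ of \eqref{eq:C1Rnd0}--\eqref{eq:C1RndT}. Every DoF in \eqref{eq:C1Rnd0}--\eqref{eq:C1Rndf} is moreover \emph{intrinsically attached} to a single sub-simplex $f\in\Delta_\ell(\mathcal T_h)$: for a vertex it is a derivative there, and for $\ell\ge1$ it is $\int_f (\partial^\beta u/\partial n_f^\beta)\,\lambda_f^{\alpha_f}\,\dd s$, where the bubble $\lambda_f$ and the $(n-\ell)$-dimensional normal plane of $f$ depend on $f$ alone and not on the ambient element $T\supseteq f$. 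Hence prescribing one number per global DoF determines, element by element via Theorem~\ref{th:localPrCm}, a unique piecewise polynomial $u$ with $u|_T\in\mathbb P_k(T)$ for every $T$, and every $u\in V_h$ arises in this way. The whole theorem therefore reduces to the implication: if $u$ is piecewise $\mathbb P_k$ and the DoFs \eqref{eq:C1Rnd0}--\eqref{eq:C1Rndf} are single-valued, then $u\in C^m(\Omega)$.

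By the well-known criterion recalled in the introduction, it suffices to verify that for every $F\in\Delta_{n-1}(\mathcal T_h)$ the normal derivatives $\partial_{n_F}^{r}u$ are continuous across $F$ for $r=0,1,\dots,m$. Fix such an $F$, shared by two elements $T_1$ and $T_2$ (the boundary case being vacuous), and for $i=1,2$ form the trace jet $J_i=\big(\,\partial_{n_F}^{r}(u|_{T_i})|_F\,\big)_{r=0}^{m}$, a tuple of polynomials on $F$ whose $r$-th entry lies in $\mathbb P_{k-r}(F)$. Continuity of $\partial_{n_F}^{r}u$ across $F$ for all $r\le m$ is exactly the equality $J_1=J_2$.

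The crux is a \emph{face unisolvence}: the trace jet of a function in $\mathbb P_k(T)$ is uniquely determined by the DoFs \eqref{eq:C1Rnd0} at the vertices of $F$ together with the DoFs \eqref{eq:C1Rndf} on the sub-simplices $g\in\Delta_i(F)$, $i=1,\dots,n-1$ (including $g=F$ itself). Granting this, all those DoFs are attached to sub-simplices contained in $F$, hence are shared by $T_1$ and $T_2$ and take equal values there; the face unisolvence then forces $J_1=J_2$, and since $r_{n-1}=m$ this is precisely the desired $C^m$-continuity across $F$. To establish the face unisolvence I would transcribe the argument behind Theorem~\ref{th:localPrCm} onto the $(n-1)$-simplex $F$: Lemma~\ref{lm:Deltaf=DeltaT} shows that each block $S_\ell(g)$ with $g\subseteq F$ involves only the sub-simplices of $g$, so the decomposition \eqref{eq:smoothdecnd} restricts to a direct decomposition of the face lattice $\mathbb T^{n-1}_k(F)$; choosing the normal frame $n_g$ of each $g\subseteq F$ compatibly, namely as directions tangent to $F$ but normal to $g$ inside $F$, together with the single direction normal to $F$, lets the DoFs \eqref{eq:C1Rndf} at $g$ read off simultaneously the lower-dimensional trace data on $g$ and all the $\partial_{n_F}^{r}$-components up to order $r_\ell\ge r_{n-1}=m$; and Lemma~\ref{lm:derivative} together with the inequalities \eqref{eq:Slfineqlty} produces the block lower-triangular DoF--function matrix whose diagonal blocks are the single-face blocks already inverted in Lemma~\ref{lem:faceunisolvence}, with Lemma~\ref{lem:vertexunisolvence} handling the vertices. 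The induction on $\dim g$ then closes as in the proof of Theorem~\ref{th:localPrCm}.

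I expect the \emph{only} genuine obstacle to be this face-unisolvence step, and inside it the bookkeeping required to select the normal frames $n_g$ consistently along the incidence chain $g\subseteq F\subseteq T$, so that the data extracted from the shared DoFs truly coincide from the two sides; the accompanying dimension count runs parallel to Theorem~\ref{th:decT}, and the monotonicity $r_0\ge r_1\ge\cdots\ge r_{n-1}=m$ ensures that every lower sub-simplex still carries normal information of order at least $m$. Once $C^m$-continuity is in place, $V_h$ is a bona fide finite element space whose dimension equals the number of distinct DoFs in \eqref{eq:C1Rnd0}--\eqref{eq:C1RndT}, which completes the proof.
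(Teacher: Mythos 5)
Your plan follows the paper's proof: local unisolvence from Theorem~\ref{th:localPrCm}, reduction of $C^m$-conformity to showing that the normal-derivative traces on each $(n-1)$-face $F$ are determined by the DoFs attached to sub-simplices of $F$, and a face-unisolvence argument obtained by transferring the lattice decomposition to $F$. The one step you flag as the "genuine obstacle" is exactly where the paper has a clean device that you should adopt: rather than treating the whole trace jet at once and worrying about choosing the normal frames $n_g$ compatibly along $g\subseteq F\subseteq T$, fix the order $i\in\{0,\dots,m\}$, set $w=\partial_{n_F}^{i}u|_F\in\mathbb P_{k-i}(F)$, and observe that the DoFs \eqref{eq:C1Rnd0}--\eqref{eq:C1Rndf} attached to sub-simplices of $F$ contain precisely the DoF set of Theorem~\ref{th:localPrCm} applied on the $(n-1)$-simplex $F$ with degree $k-i$ and the shifted smoothness vector $\bs r_F^{i}=(r_0-i,\dots,r_{n-2}-i,0)$, which still satisfies $r_{\ell}-i\geq 2(r_{\ell+1}-i)$ and $k-i\geq 2(r_0-i)+1$. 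This decouples the orders $i$ and makes the frame bookkeeping harmless: for each fixed sub-simplex $g$ and each fixed derivative order $s$, a change of basis of the $(n-\dim g)$-dimensional normal plane acts invertibly on the set of order-$s$ DoFs, so the information they carry (and their single-valuedness) is frame-independent, and one may take the frame at $g$ to consist of $n_F$ together with directions tangent to $F$. With that substitution your argument closes exactly as in the paper.
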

\begin{proof}
Restricted to one simplex $T$, by Theorem \ref{th:localPrCm}, DoFs \eqref{eq:C1Rnd0}-\eqref{eq:C1RndT}  will define a function $u$ s.t. $ u|_{T}\in \mathbb P_k(T)$. We only need to verify $u\in C^{m}(\Omega)$. 
It suffices to prove $\frac{\partial^{i} u}{\partial n_F^{i}}|_F \in \mathbb P_{k-i}(F)$, for all $i=0,\ldots, m$ and all $F\in \Delta_{n-1}(T)$, are uniquely determined by \eqref{eq:C1Rnd0}-\eqref{eq:C1Rndf} on $F$. 

Let $w=\frac{\partial^{i} u}{\partial n_F^{i}}|_F\in \mathbb P_{k-i}(F)$. Consider the modified index sequence $\bs r_F^{i} = (r_0 - i, r_1 - i, \ldots, r_{n-2} - i, 0)$ and degree $k^i = k - i$. Then $k^i, \bs r_F^{i}$ satisfies the condition in Theorem \ref{th:decT} and we obtain a direct decomposition of $\mathbb T^{n-1}_{k-i}(F) = \Oplus_{\ell = 0}^{n-1}\Oplus_{f\in \Delta_{\ell}(F)} S_{\ell}^F(f),$ where
\begin{align*}
S_0^F(\texttt{v}) &=  D(\texttt{v}, r_0-i)\cap \mathbb T^{n-1}_{k-i}(F), \\
S_{\ell}^F(f) &= (D(f, r_{\ell}-i)\cap \mathbb T^{n-1}_{k-i}(F)) \backslash \left [  \Oplus_{i = 0}^{\ell - 1}\Oplus_{e\in \Delta_{i}(F)} S_{i}^F(e) \right ], \; \ell = 1,\dots, n-2, \\
S_{n-1}^F(F) & = \mathbb T^{n-1}_{k-i}(F) \backslash \left [  \Oplus_{\ell = 0}^{n-2}\Oplus_{f\in \Delta_{\ell}(F)} S_{\ell}^F(f)\right ].
\end{align*}
The DoFs \eqref{eq:C1Rnd0}-\eqref{eq:C1Rndf} related to $w$ are
\begin{align*}
D_F^{\alpha} w (\texttt{v}) & \quad \alpha \in \mathbb N^{1:n-1}, |\alpha | \leq  r_0-i, \texttt{v}\in \Delta_0(F),\\
\int_f \frac{\partial^{\beta} w}{\partial n_{F,f}^{\beta}}  \, \lambda_f^{\alpha_f} \dd s & \quad \alpha\in S_{\ell}^F(f), |\alpha_f| = k-i - s, \beta \in \mathbb N^{1:n-1-\ell}, |\beta| = s, \\
&\quad f\in \Delta_{\ell}(F), \ell =1,\ldots, n-2, s=0,\ldots, r_{\ell}-i, \notag \\
\int_F w \lambda^{\alpha} \dx & \quad \alpha \in S_{n-1}^F(F),
\end{align*}
where $D_Fw$ is the tangential derivatives of $w$, $n_{F,f}$ is the normal vector of $f$ but tangential to $F$.
Clearly the modified sequence $\bs r^F_{i}$ still satisfies constraints required in Theorem \ref{th:localPrCm}. We can apply Theorem \ref{th:localPrCm} with the shape function space $\mathbb P_{k-i}(F)$ to conclude $w$ is uniquely determined on $F$. Thus the result $u\subset C^m(\Omega)$ follows.
\end{proof}

Counting the dimension of $V_h$ is hard and not necessary. The cardinality of $S_{\ell}(f)$ is difficult to determine due to the inequality constraints. In the implementation, compute the distance of lattice nodes to sub-simplexes and use a logic array to find out $S_{\ell}(f)$. Our geometric decomposition using the distance not only simplifies the construction but also provide an easy way of implementation.

\section*{Acknowledgement}
The authors would like to thank Dr. Shuhao Cao in Washington University, St. Louis for providing figures.

\bibliographystyle{abbrv}
\bibliography{refgeodecomp}
\end{document}